\font\msbm=msbm10
\numberwithin{equation}{section}
\theoremstyle{plain}
\newtheorem{theorem}{Theorem}[section]
\newtheorem{lemma}[theorem]{Lemma}
\newtheorem{corollary}[theorem]{Corollary}
\newtheorem{conjecture}[theorem]{Conjecture}
\newtheorem{example}[theorem]{Example}
\newtheorem{proposition}[theorem]{Proposition}
\newtheorem{definition}{Definition}[section]
\newtheorem{remark}[theorem]{Remark}
\def\mathbb#1{\hbox{\msbm{#1}}}
\newcommand{\be}{\boldsymbol{e}}
\newcommand{\br}{\boldsymbol{r}}
\newcommand{\bs}{\boldsymbol{s}}
\newcommand{\bu}{\boldsymbol{u}}
\newcommand{\bv}{\boldsymbol{v}}
\newcommand{\bx}{\boldsymbol{x}}
\newcommand{\by}{\boldsymbol{y}}
\newcommand{\bone}{\boldsymbol{1}}
\newcommand{\BLambda}{\boldsymbol{\Lambda}}
\newcommand{\BDelta}{\boldsymbol{\Delta}}
\newcommand{\BPsi}{\boldsymbol{\Psi}}
\newcommand{\BA}{\boldsymbol{A}}
\newcommand{\BB}{\boldsymbol{B}}
\newcommand{\BC}{\boldsymbol{C}}
\newcommand{\BD}{\boldsymbol{D}}
\newcommand{\BE}{\boldsymbol{E}}
\newcommand{\BG}{\boldsymbol{G}}
\newcommand{\BJ}{\boldsymbol{J}}
\newcommand{\BK}{\boldsymbol{K}}
\newcommand{\BL}{\boldsymbol{L}}
\newcommand{\BM}{\boldsymbol{M}}
\newcommand{\BO}{\boldsymbol{O}}
\newcommand{\BP}{\boldsymbol{P}}
\newcommand{\BQ}{\boldsymbol{Q}}
\newcommand{\BR}{\boldsymbol{R}}
\newcommand{\BS}{\boldsymbol{S}}
\newcommand{\BT}{\boldsymbol{T}}
\newcommand{\BU}{\boldsymbol{U}}
\newcommand{\BV}{\boldsymbol{V}}
\newcommand{\BW}{\boldsymbol{W}}
\newcommand{\BX}{\boldsymbol{X}}
\newcommand{\BY}{\boldsymbol{Y}}
\newcommand{\BZ}{\boldsymbol{Z}}
\newcommand{\BPhi}{\boldsymbol{\Phi}}
\newcommand{\BPi}{\boldsymbol{\Pi}}
\newcommand{\BSigma}{\boldsymbol{\Sigma}}
\newcommand{\btheta}{\boldsymbol{\theta}}
\newcommand{\bzero}{\boldsymbol{0}}
\newcommand{\PP}{\mathcal{P}}
\newcommand{\I}{\boldsymbol{I}}
\newcommand{\RR}{\mathbb{R}}
\newcommand{\lag}{\langle}
\newcommand{\rag}{\rangle}
\newcommand{\eps}{\epsilon}
\newcommand*\diff{\mathop{}\!\mathrm{d}}
\DeclareMathOperator{\VEC}{vec}
\DeclareMathOperator{\Tr}{Tr}
\DeclareMathOperator{\Od}{O}
\DeclareMathOperator{\E}{\mathbb{E}}
\DeclareMathOperator{\diag}{diag}
\DeclareMathOperator{\blkdiag}{blkdiag}
\DeclareMathOperator{\ddiag}{ddiag}
\DeclareMathOperator{\rank}{rank}
\DeclareMathOperator{\argmin}{argmin}
\DeclareMathOperator{\St}{St}
\DeclareMathOperator{\BDG}{BDG}
\definecolor{xl}{RGB}{200,50,120}
\begin{document}
\title{\bf
Improved Global Landscape Guarantees for Low-rank Factorization in Synchronization}
\author{Shuyang Ling}

\long\def\\#1//{}

\date{\today}

\author{Shuyang Ling\thanks{New York University Shanghai. Address: 567 Yangsi Road, Pudong New District, Shanghai, China, 200124. S.L. is (partially) financially supported by the National Key R\&D Program of China, Project Number 2021YFA1002800, National Natural Science Foundation of China No.12571105, Shanghai STCSM Rising Star Program No. 24QA2706200, STCSM General Program No. 24ZR1455300, SMEC AI Initiative Program, and NYU Shanghai Boost Fund.}}

\maketitle

\begin{abstract}
The orthogonal group synchronization problem, which aims to recover a set of $d \times d$ orthogonal matrices from their pairwise noisy products, plays a fundamental role in signal processing, computer vision, and network analysis. In recent years, numerous optimization techniques, such as semidefinite relaxation (SDR) and low-rank (Burer-Monteiro) factorization, have been proposed to address this problem and their theoretical guarantees have been extensively studied. While SDR is provably powerful and exact in recovering the least-squares estimator under certain mild conditions, it is not scalable. In contrast, the low-rank factorization is highly efficient but nonconvex, meaning its iterates may get trapped in local minima. To close the gap, we analyze the low-rank approach and focus on understanding when the associated nonconvex optimization landscape is benign, i.e., free of spurious local minima. Recent works suggest that the benignness depends on the condition number of the Hessian at the global minimizer, but it remains unclear whether sharp guarantees can be achieved. In this work, we consider the low-rank approach which corresponds to an optimization problem over the Stiefel manifold ${\rm St}(p,d)^{\otimes n}$. By formulating the landscape analysis into another convex optimization problem, we provide a unified characterization of the optimization landscape for all parameter pairs $(p,d)$ with $p \geq d+2$ for $d\geq 1$ and $p = d+1$ for $1\leq d\leq 3$ which gives a much improved dependence on the condition number of the Hessian. Our results recover the known sharp state-of-the-art bound for $d=1$ which is extremely useful for characterizing the Kuramoto synchronization, and significantly improved the guarantees for the general case $d \geq 2$ with $p \geq d+2$ over the existing results. The theoretical results are versatile and applicable to a wide range of examples.
\end{abstract}

\section{Preliminaries}

The orthogonal group synchronization problem focuses on  recovering $n$ orthogonal matrices $\{\BO_i\}_{i=1}^n$ of size $d\times d$ from their noisy pairwise measurements $\BA_{ij}$:
\[
\BA_{ij} = \BO_i \BO_j^{\top} + \BDelta_{ij}
\]
where $\Od(d) := \{\BO_i\in\RR^{d\times d}: \BO_i\BO_i^{\top} = \BO_i^{\top}\BO_i = \I_d\}$ and $\BDelta_{ij}$ is the additive noise. It include numerous interesting special examples, e.g. community detection and graph clustering ($\mathbb{Z}_2$-synchronization)~\cite{ABBS14,B18,BBV16,GW95}, object matching in computer vision (permutation group)~\cite{PKS13}, phase retrieval and clock synchronization (circle group)~\cite{WDM15,BBS17,S11}, cryo-EM and point cloud registration (SO(3) group)~\cite{CKS15,SS11,L23b} and it also serves as one of the core subproblems in SLAM and robotics (special Euclidean group SE($d$)-synchronization)~\cite{RCBL19}. 

Recently, there has been significant progress in solving the orthogonal group synchronization problem. In this article, we provide a non-exhaustive review of this problem, with a particular focus on approaches based on optimization methods. Moreover, we will review the connection of the optimization landscape to the condition number of the Hessian at the global minima.

\paragraph{Least squares estimator:}
One common approach is to minimize the least squares objective function:
\[
\min_{\BG_i\in\Od(d)}~\sum_{i,j} \|\BA_{ij} - \BG_i\BG_j^{\top} \|_F^2 =\min -\sum_{i,j} \lag \BA_{ij}, \BG_i\BG_j^{\top}\rag
\]
which is equivalent to minimize a generalized quadratic form:
\begin{equation}\label{def:pp}
\min_{\BG\in\Od(d)^{\otimes n}}~-\lag \BA, \BG\BG^{\top}\rag
\end{equation}
where $\BG\in\Od(d)^{\otimes n}\subseteq \RR^{nd\times d}$ means it is a concatenation of $n$ $d\times d$ orthogonal matrices and $\BA_{ij}$ is the $(i,j)$-block of $\BA\in\RR^{nd\times nd}$. Note that when $d=1$, the problem reduces to the graph Max-Cut problem. Consequently, it is generally NP-hard and, in fact, belongs to Karp's original list of 21 NP-hard problems~\cite{K72} in this case.

\paragraph{SDR-semidefinite relaxation:}
To overcome the computational hardness, we consider the convex relaxation of~\eqref{def:pp} by following the Goemans-Williamson relaxation of the graph Max-Cut problem~\cite{GW95}.
By letting $\BX=\BG\BG^{\top}$ which satisfies $\BX\succeq 0$ and $\BX_{ii}= \I_d$, i.e., the diagonal block equals $\I_d$,  the resulting convex relaxation is given by
\begin{equation}\label{def:sdr}
\min~-\lag \BA, \BX\rag\quad \text{s.t.}\quad \BX\succeq 0,\BX_{ii} = \I_d.
\end{equation}
This convex optimization can be solved by using algorithms such as interior point method~\cite{BN01,NN94}.
Unlike the Max-Cut problem that needs to perform random rounding after solving the SDR, it is very interesting that this Goemans-Williamson type relaxation is tight in many  applications, i.e., the global minimizer to~\eqref{def:sdr} is exactly rank-$d$ and recovers the globally optimal least squares estimator, under several statistical models~\cite{ABBS14,B18,L22,L23,L23b}. 

Using the duality theory in convex optimization~\cite{BN01,BV04}, one can show that the rank-$d$ solution $\BZ\BZ^{\top}$ is a global minimizer~\cite{B23,L23} if
\[
\BL: = \widehat{\BLambda} - \BA\succeq 0,~~\BL\BZ = 0.
\]
In fact, one can show that $\BL$ is essentially related to the Hessian and $\BL\BZ$ is the stationarity condition of the KKT condition.
Here $\widehat{\BLambda}$ is a $nd\times nd$ block-diagonal matrix whose $i$-th $d\times d$ diagonal block is
\[
\widehat{\BLambda}_{ii} =  \BDG(\BA\BZ\BZ^{\top}) = \frac{1}{2}\sum_{j=1}^n \left(\BA_{ij}\BZ_j\BZ_i^{\top} +\BZ_i\BZ_j^{\top}\BA_{ij}^{\top}\right).
\]
where
\[
\BDG(\BX) = \blkdiag\left(\frac{\BX_{11}+\BX_{11}^{\top}}{2},\cdots,\frac{\BX_{nn}+\BX_{nn}^{\top}}{2}\right)
\]
takes any $nd\times nd$ matrix, symmetrize it, and only keep the diagonal blocks.
In fact, $ \widehat{\BLambda}_{ii}$ corresponds to the dual variable associated to $\BX_{ii} = \I_d.$ 
In particularly, if $\lambda_{d+1}(\BL) > 0$, i.e., the $(d+1)$-th smallest eigenvalue of $\BL$ is positive, then $\BZ\BZ^{\top}$ is the unique global minimizer to the least squares objective and the SDR as there are $d$ eigenvalues equal to 0 if $\BL\BZ = 0.$

\paragraph{Burer-Monteiro factorization:}
As the convex relaxation is slow and not scalable, one considers the low-rank factorization to the SDR, known as the Burer-Monteiro factorization~\cite{BM03,BM05}:
\begin{equation}\label{def:bm}
\min_{\BS\in\St(p,d)^{\otimes n}}~-\lag \BA, \BS\BS^{\top}\rag
\end{equation}
where $\BS\in\St(p,d)^{\otimes n}$ and $\BS_i\in\St(p,d)$, i.e.,
\[
\St(p,d) := \{\BS_i\in\RR^{d\times p}: \BS_i\BS_i^{\top} = \I_d\},~~p\geq d.
\]
The Burer-Monteiro factorization~\eqref{def:bm} can be viewed as an interpolation between the least squares estimation ($p=d$) and the SDR ($p= nd$). Many optimization methods can be used to solve this problem~\cite{WY13,AMS08,B23}. %In practice, the Burer-Monteiro factorization works very well and does not get stuck at local minima in the empirical studies. 
In practice, despite the nonconvexity of the objective function, local method and gradient-based approach work remarkably well in obtaining an estimator of the orthogonal group~\cite{L22,LYS23,L23b}. Several works have done in understanding the properties of local minima~\cite{MMMO17} for the synchronization problem. 
The core question regarding the performance of the Burer-Monteiro factorization is: 
\begin{align*}
&\text{\em Is the landscape free of spurious local minima if the SDR works?} 
\end{align*}
Here we also refer it to {\em the benignness of landscape}, in which the nonconvex function has no spurious non-global local minimizers, i.e., every local minimizer is global. 
It is well known that the SDR is tight under information-theoretically near-optimal conditions on the signal-to-noise ratio (SNR)~\cite{B18,L22}. In other words, we are interested in whether  the optimization landscape is benign under near-optimal conditions.

\paragraph{Analysis of the nonconvex landscape:}
The landscape analysis of the nonconvex function is a significant mathematical question and has connection with many different scientific areas. Nowadays, the nonconvex function is ubiquitous in numerous problems in signal processing and machine learning. 
The landscape analysis has been studied in the well-known phase retrieval problem~\cite{SQW18,M25b}, matrix completion~\cite{GLM16}, dictionary learning~\cite{SQW16}, statistical physics~\cite{AB13,ABC13},  low-rank matrix sensing problems~\cite{Z24}, and empirical risk minimization~\cite{MBM18}. Most of these aforementioned works such as~\cite{SQW18,GLM16,SQW16,MBM18} focus on understanding how the landscape depends on the data sample size, and aim to derive a near-optimal bound on the sample size to guarantee  a benign landscape that is similar to its population counterpart. The other important direction is to understand how the complexity of the landscape depends on the noise in the data such as~\cite{AB13,AMMN18}, and thus completely characterize the complexity of smooth random functions~\cite{ABC13}.
% {\color{blue} Add spin glass and energy landscape...}

For the synchronization problem, the landscape analysis on its nonconvex energy function~\eqref{def:bm} has profound implications beyond the optimization perspective. 
In particular, if $p =2$ and $d=1$, the function landscape analysis provides insights into the global synchronization of Kuramoto model on general networks~\cite{ABK22,EW24,K75,LXB19,MB24,M25,L25}. For general $d\geq 2$, it has applications in the orthogonal group synchronization~\cite{MMMO17,L22}, generalized Procrustes problem~\cite{L23b}, and non-Abelian Kuramoto model (quantum Kuramoto model). 
Now we provide a brief discussion on the recent research regarding the benign landscape.
As pointed out earlier, the higher $p$ is, the more likely the optimization landscape is to be benign. Intuitively, increasing $p$ leads to more degree of freedom in the search space, and in the extreme case $p = nd,$ the function is equivalent to the convex optimization. 
%There have many studies on the optimization landscape of the nonconvex objective function arising from numerous applications. Regarding~\eqref{def:bm}, one is interested in
Therefore, many studies have been focusing on how the overparameterization in the Burer-Monteiro factorization~\eqref{def:bm} affects the optimization landscape. %As pointed out already, the larger the parameter $p$, the more benign the optimization landscape, i.e., there exist potentially fewer spurious non-global local minimizers for larger $p$. 
In particular, we are interested in finding the smallest $p$ such that the landscape of~\eqref{def:bm} is benign for the given input data $\BA.$
The work~\cite{BVB20} has established a general theorem that guarantees every second-order critical point is a local minimizer if $p$ is roughly of order $\sqrt{2nd}$ which is known as the Pataki's bound~\cite{P98}. This is later confirmed by~\cite{WW20} that this bound is almost tight for general data matrix $\BA$. 

The Pataki's bound is apparently too conservative in many applications because the data matrix $\BA$ often has certain statistical structures. 
Empirically, one observes that even if $p$ is of constant order, running gradient descent with random initialization works remarkably well. This inspires another stream of research focuses on how the signal-to-noise ratio (SNR) affects the benign landscape of the corresponding Burer-Monteiro factorization. Consider a concrete example in which $\BA_{ij} = \BO_i\BO_j^{\top}+\sigma\BW_{ij}$ for the additive Gaussian noise $\BW_{ij}\in\RR^{d\times d}$. Suppose $\sigma=0$, then it is very simple to verify that the resulting landscape of~\eqref{def:bm} is free of spurious local minimizers. The natural question is what the largest allowable $\sigma$ is so that the optimization landscape is benign such as~\cite{BBV16,L23}. However, they are suboptimal in terms of SNR for any constant $p$, i.e., the bound to guarantee the landscape benignness differs significantly from the near-optimal bound via SDR.

The first result that provides a near-optimal optimal bound on the SNR is given in the author's previous result~\cite{L25}, in which we establish a sufficient condition for the benignness of the landscape that only depends on the condition number of the Riemannian Hessian at the global minimizer. More precisely, note that $\BL\succeq 0$ and $\BL\BZ = 0$, and let $\lambda_{\max}(\BL)$ and $\lambda_{d+1}(\BL)$ be the largest and $(d+1)$-th smallest eigenvalues of $\BL$ respectively. Suppose 
\begin{equation}\label{eq:l25}
\frac{\lambda_{\max}(\BL)}{\lambda_{d+1}(\BL)} \leq \frac{p+d-2}{2d},~~d\geq 1,
\end{equation}
then the optimization landscape is benign, i.e., it is free of spurious local minima. 
The interesting part of this result is that it does not assume that statistical assumption on data generative model, and it applies to numerous examples in $\mathbb{Z}_2$-synchronization and $\Od(d)$-synchronization. For the data obeying certain statistical models, then the condition number of $\BL$ encodes the information of the SNR in the data: {\em the higher the noise, the larger the condition number}. However, as the condition number is at least 1, thus the bound~\eqref{eq:l25} is only meaning if any $p\geq d+3$ for any $d\geq 1$. 
Remarkably, Rakoto Endor and Waldspurger~\cite{EW24} improved the bound for $d=1$ from~\eqref{eq:l25} in~\cite{L25} to
\[
\frac{\lambda_{\max}(\BL)}{\lambda_2(\BL)} < p,
\]
which is proven to be optimal in the $d=1$ case. This is particularly interesting because it covers the case for Kuramoto model, i.e., $d = 1$ and $p=2$, which extends the global synchrony of Kuramoto model on sphere and Stiefel manifold~\cite{MTG17,MB24}. Later, 
McRae extended it to $d=1$ for normalized Laplacian~\cite{M25}.
It can be applied to analyze the synchronization of Kuramoto model, especially the Kuramoto synchronization on graphs~\cite{LXB19,ABK22,MABB24}. 
The relationship between the condition number of the objective function and landscape has also been studied in other low-rank matrix recovery problem such as~\cite{Z24}, where the author establishes a sufficient condition of the benign landscape via the condition number.

Finally, we summarize our contributions. We establish a unified bound that links the benignness of the optimization landscape to the condition number of the Hessian at the global minimizer. In particular, the landscape of~\eqref{def:bm} is free of spurious second-order critical points provided that
\[
\frac{\lambda_{\max}(\BL)}{\lambda_{d+1}(\BL)} < \alpha_G(p,d),
\]
where $\alpha_G(p,d)$ is a scalar depending only on $p$ and $d$. Our bound improves upon all existing ones in the sense that $\alpha_G(p,d)$ is larger than previously known thresholds; consequently, the landscape is provably robust to higher noise levels, since stronger noise typically corresponds to a larger condition number of~$\BL$.
This guarantee holds for any pair $(p,d)$ with $p \ge d+2$, and also for the borderline case $p=d+1$ when $1 \le d \le 3$. When $d=1$, our result matches the state of the art in~\cite{EW24,M25}, which in particular implies global synchronization even at $p=2$. For $d \ge 2$, we substantially improve the bound in~\cite{L25}. As in prior work, our sufficient condition is purely deterministic and applies to a broad range of models, including the Kuramoto model and $\mathbb{Z}_2$-synchronization; moreover, it yields improved condition-number requirements for orthogonal group synchronization and the generalized orthogonal Procrustes problem.
On the technical side, our main contribution is to extend the framework of~\cite{EW24} from $d=1$ and $p\ge 2$ (sphere-constrained variables) to the general regime $d\ge 1$ and $p \ge d+2$ (variables on the Stiefel manifold). More precisely, we cast the estimation of the relevant condition number as a convex optimization problem and derive sharper lower bounds via dual feasibility. The key technical component is the construction of a broader family of dual certificates that remain valid for $d\ge 2$, where $\mathrm{O}(d)$ is noncommutative.

Note that many prior landscape analyses, such as in~\cite{SQW18,GLM16,SQW16}, rely on statistical assumptions on the data-generating models. It remains an intriguing challenge to provide a general framework characterizing the optimization landscape that also applies to specific statistical models. Our work aligns with this goal~\cite{L25,EW24,M25b,Z24}, moving toward a unified and general understanding of nonconvex optimization landscapes. We believe the techniques and methodology developed here, together with recent advances, can be extended to yield a broadly applicable framework for landscape analysis in other nonconvex problems.

\section{Main results}\label{s:main}

Before proceeding, we go over some notations that will be used. We denote boldface $\bx$ and $\BX$ as a vector and matrix respectively, and $\bx^{\top}$ and $\BX^{\top}$ are their corresponding transpose. The $\I_n$ and $\BJ_n$ stands the $n\times n$ identity  and constant ``1" matrix of size $n\times n$. For a vector $\bx$, $\diag(\bx)$ is a diagonal matrix whose diagonal entries are given by $\bx$; for a set of $n$ matrices $\{\BX_i\}_{i=1}^n$, we let $\blkdiag(\BX_1,\cdots,\BX_n)$ be a block diagonal matrix with the diagonal blocks equal to $\{\BX_i\}_{i=1}^n.$
For two matrices $\BX$ and $\BY$ of the same size, $\lag \BX,\BY\rag = \sum_{i,j}X_{ij}Y_{ij} = \Tr(\BX\BY^{\top})$ is their inner product, and $\BX\otimes\BY$ is their Kronecker product. For any matrix $\BX$, the Frobenius and operator norms are denoted by $\|\BX\|_F$ and $\|\BX\|$ respectively. We denote $\lambda_{\max}(\BX)$, $\lambda_k(\BX)$,  $\sigma_k(\BX)$, and $\sigma_{\min}(\BX)$ as the largest eigenvalue, $k$-th smallest eigenvalue, $k$-th smallest singular value, and the smallest singular value of $\BX$ respectively. For any real number $z$, we define $z_+ = \max\{z, 0\}.$ %For two nonnegative functions $f(x)$ and $g(x)$, we say $f(x)\lesssim g(x)$ (and $f(x) \gtrsim g(x)$) if there exists a universal constant $C> 0$ such that $f(x) \leq Cg(x)$ (and $f(x)\geq Cg(x)$).

\vskip0.25cm

In this section, we will first provide the main result in Theorem~\ref{thm:main} and its simplified but slightly weaker version in Theorem~\ref{thm:alphaM}. In Section~\ref{ss:main1}, we give more details on Theorem~\ref{thm:main}, especially on the exact expression of $G(x,\tau)$, the numerical value of $\alpha_G(p,d)$, and the comparison with the existing work. In Section~\ref{ss:main2}, we briefly provide some applications and Section~\ref{ss:main3} gives some counterexamples which demonstrate how far we are away from the possible optimal bound. Finally, we provide a roadmap of the proof by only giving the main technical results without delving into the details in Section~\ref{ss:roadmap}.
\vskip0.25cm

Now we are ready to present our main results. Before that, we clarify the core question of our interest: assume that the SDR~\eqref{def:sdr} is tight, and the Hessian matrix $\BL$ at the global minimizer satisfies
\[
\BL \BZ = 0,~~~\BL\succeq 0,~~~\lambda_{d+1}(\BL)> 0,
\]
i.e., $\BZ\BZ^{\top}$ is the unique global minimizer to the SDR~\eqref{def:sdr}.
How does the optimization landscape depend on the condition number of the Hessian $\BL$? 
Our main result shows that the optimization landscape is free of spurious second-order critical points. This means as long as $\BL$ is well-conditioned, the landscape is benign.

\begin{theorem}\label{thm:main}{\bf Master theorem.}
Suppose $\BL\BZ = 0$ and $\BL\succeq 0$.
Every second-order critical point $\BS$ is a global minimizer satisfying $\BS\BS^{\top} = \BZ\BZ^{\top}$ if
\[
\frac{\lambda_{\max}(\BL)}{\lambda_{d+1}(\BL)} < \alpha_G(p,d)
\]
where
\begin{equation}\label{def:alpha}
\begin{aligned}
\alpha_G(p,d) & : = \max_{0\leq \tau\leq 1} \alpha_G(p,d,\tau), \\
\alpha_G(p,d,\tau) & : = \max_{\alpha}\{ \alpha: r(\tau) \alpha^{-1} \geq G(1-\alpha^{-1},\tau)   \},
\end{aligned}
\end{equation}
and
\begin{equation}\label{def:rqg}
\begin{aligned}
r(\tau) & := 2(d-1)\tau^2 + p - d, \\
%(q(\tau)+d\tau^2) & = 2(d-1)\tau^2 - 2(d-1)\tau + d, \\
q(\tau) & :=  (d-2)\tau^2 - 2(d-1)\tau + d, \\
G(x,\tau) & := \sup_{0\leq u\leq w\leq v\leq 1}\left\{ q(\tau) \cdot\frac{u}{v} \left( 2 - \frac{pu}{p-d}\right) + d\tau^2 \cdot\frac{w}{v}\left(2 -\frac{pdw}{pd-1} \right)  - \frac{1}{x}\cdot \frac{(1-dv)_+u^2}{dv^2}\right\}.
\end{aligned}
\end{equation}
\end{theorem}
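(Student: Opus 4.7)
The plan is to proceed by contradiction: suppose $\BS\in\St(p,d)^{\otimes n}$ is a second-order critical point of~\eqref{def:bm} with $\BS\BS^{\top}\neq \BZ\BZ^{\top}$, and show this forces $\lambda_{\max}(\BL)/\lambda_{d+1}(\BL)\geq \alpha_G(p,d)$. The first step is to write the Riemannian first- and second-order optimality conditions on $\St(p,d)^{\otimes n}$. The first-order condition yields a symmetric block-diagonal multiplier $\widehat{\BLambda}_{\BS}$ satisfying $(\widehat{\BLambda}_{\BS}-\BA)\BS=\bzero$; this defines an analog $\BL_{\BS}:=\widehat{\BLambda}_{\BS}-\BA$ of the dual certificate at $\BS$, with $\BL_{\BS}\BS=\bzero$. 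The second-order condition translates to $\lag \BL_{\BS},\dot\BS\dot\BS^{\top}\rag\geq 0$ for every tangent vector $\dot\BS\in T_{\BS}\St(p,d)^{\otimes n}$. Combined with $\BL\BZ=\bzero$, this framework reduces the problem to producing a judicious family of tangent vectors whose inequalities force a quantitative lower bound on $\lag\BL,\BZ\BZ^{\top}-\BS\BS^{\top}\rag$.

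The central new ingredient is the construction of such a family of test directions at $\BS$. Extending the one-parameter interpolation used for $d=1$ in~\cite{EW24}, I would build each block $\dot\BS_i$ from $\BZ_i$, the normal projector $\I_p - \BS_i^{\top}\BS_i$, and an auxiliary orthogonal matrix $\BR_i\in\Od(d)$ that absorbs the non-abelian gauge freedom, interpolated by a scalar $\tau\in[0,1]$. Averaging the second-order inequality over $\BR_i$ against the Haar measure on $\Od(d)$ should collapse the cross-terms into low-order moments of the singular values of $\BZ_i^{\top}\BS_i$; the scalars $r(\tau)$ and $q(\tau)$ in~\eqref{def:rqg} should emerge from these Haar moments together with the Stiefel dimension count $pd-d(d+1)/2$. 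The output is a master inequality of the shape $r(\tau)\,\lag\BL,\BZ\BZ^{\top}-\BS\BS^{\top}\rag\leq\lambda_{\max}(\BL)\cdot\Phi(\BS,\BZ;\tau)$ for some polynomial $\Phi$ in correlation moments.

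Next I would recast the estimation of $\Phi(\BS,\BZ;\tau)$ as a convex program in three scalar variables $0\leq u\leq w\leq v\leq 1$, obtained by grouping the singular values of $\BZ^{\top}\BS$ into an overall normalized correlation $v$, an intermediate weighted sum $w$, and a Cauchy--Schwarz refined tail $u$; the three summands in $G(x,\tau)$ should correspond one-to-one with these three scales. The term $-x^{-1}(1-dv)_+u^2/(dv^2)$ is the only one where $\lambda_{d+1}(\BL)$ enters, through $x=1-\alpha^{-1}$, via a reverse Rayleigh-type bound of the form $\lag\BL,\BZ\BZ^{\top}-\BS\BS^{\top}\rag\geq \lambda_{d+1}(\BL)\cdot \|\BZ\BZ^{\top}-\BS\BS^{\top}\|_F^2/C$. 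Reading off the constraint $r(\tau)\alpha^{-1}\geq G(1-\alpha^{-1},\tau)$ for $\alpha=\lambda_{\max}(\BL)/\lambda_{d+1}(\BL)$ and then optimizing over $\tau$ recovers~\eqref{def:alpha}.

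The principal technical obstacle is the construction of the test tangents when $d\geq 2$. The non-commutativity of $\Od(d)$ means a single interpolation cannot simultaneously control the symmetric and skew parts of $\BZ_i^{\top}\BS_i$, and one must show that Haar-averaging the resulting quadratic form produces a valid dual certificate, i.e.\ a nonnegative combination of tangent-space inequalities matching the structure of $G(x,\tau)$. The borderline regime $p=d+1$, handled only for $d\leq 3$, will likely require a separate argument: the Stiefel tangent space there is too small for the auxiliary orthogonal freedom to realize the full coefficient $q(\tau)$, and the threshold $d\leq 3$ should mark the point at which the associated dual program ceases to be feasible.
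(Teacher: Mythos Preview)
Your high-level contrapositive structure is correct, and you rightly identify that a one-parameter family of tangent directions indexed by $\tau$ is the key new ingredient. However, two concrete elements of your plan diverge from what actually works, and the second is a genuine gap.

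First, the averaging is not over Haar measure on $\Od(d)$ with auxiliary $\BR_i\in\Od(d)$. The paper averages over a Gaussian matrix $\BPhi\in\RR^{d\times p}$ applied to the linear map $\mathcal{P}_{\BS_i,\tau}(\BY)=\BY(\I_p-\BS_i^\top\BS_i)-\tau(\BS_i\BY^\top-\BY\BS_i^\top)\BS_i$, which for each $\tau$ sends any $\BY$ onto the tangent space at $\BS_i$. The scalars $r(\tau)$ and $q(\tau)$ arise as traces of the covariance $\BSigma_{\BS,\tau}=\E\,\mathcal{L}_{\BS,\tau}(\BPhi)\mathcal{L}_{\BS,\tau}(\BPhi)^\top$, not from Haar moments. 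Your Haar scheme might be workable, but it would not produce these particular polynomials.

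Second, and more importantly, your account of the term $-x^{-1}(1-dv)_+u^2/(dv^2)$ is wrong. It does \emph{not} come from a reverse Rayleigh bound of the form $\lag\BL,\BZ\BZ^\top-\BS\BS^\top\rag\geq\lambda_{d+1}(\BL)\cdot\|\cdot\|_F^2/C$. In the paper this term is produced by an \emph{additional deterministic} tangent direction $\BC=\mathcal{P}_{\BS,\tau}(\BY)$ with the specific choice $\BY=-\BZ^\top\mathcal{P}_{\BS,\tau}(\BZ^\top\BS)/\|\BZ^\top\mathcal{P}_{\BS,\tau}(\BZ^\top\BS)\|_F$, which enters the dual construction as the variable $\BB=\delta\BC$. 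The first-order stationarity condition makes $\lag\BL,\BC\BS^\top+\BS\BC^\top\rag=0$, so adding $\delta(\BC\BS^\top+\BS\BC^\top)$ costs nothing in the inequality but shifts the trace constraint by $\delta\lag\BP_\perp\BC,\BP_\perp\BS\rag$; optimizing over $\delta$ and $\beta$ jointly is what generates the negative correction. Without this extra direction you recover only the weaker bound $\alpha_M(p,d)$ of Theorem~\ref{thm:alphaM}, not $\alpha_G(p,d)$. The variables $u,v,w$ then arise not from singular-value groupings but from the concrete quantities $v=1-\|\BZ^\top\BS\|_F^2/(n^2d)$, $u=1-\|\BS\BR\|_F^2/(nd)$ with $\BR^\top=\mathcal{P}(\BZ^\top\BS)$, and $w=1-(nd)^{-1}\sum_i\lag\BS_i,\BZ^\top\BS\rag^2/\|\BZ^\top\BS\|_F^2$, and the $(1-dv)_+$ factor comes from lower-bounding $\sigma_{\min}^2(\BZ^\top\BS)$.
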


This theorem provides a unified characterization of the relation between the condition number and landscape for any $d\geq 1.$ 
We claim that with this theorem, the optimization landscape of~\eqref{def:bm} is benign under nearly identical conditions that guarantee the performance of the SDR~\eqref{def:sdr}. Here $G(x,\tau)$ is an increasing function in $x>0$ for any fixed $0\leq \tau\leq 1$. We will provide the exact form of $G(x,\tau)$ and the resulting $\alpha_G(p,d,\tau)$ in Proposition~\ref{prop:gfun} and Theorem~\ref{thm:main2}. The roadmap of the proof is provided in Section~\ref{ss:roadmap}.

We give a brief discussion on $r(\tau),q(\tau)$, and $\tau$.
The parameter $\tau\in[0,1]$ parametrizes a family of tangent vectors used in the dual certificate construction, as shown in~\eqref{def:LP}. In particular, if $\tau=1$, the result reduces to the special case in~\cite{L25}. 
The scalar functions $q(\tau)$ and $r(\tau)$ appearing in Theorem~\ref{thm:main} capture the aggregate effect of this family: as shown in~\eqref{eq:lptrace}, they arise as expectations of quadratic expressions associated with the random tangent vectors. Therefore, optimizing over $\tau$ amounts to selecting the most favorable member of this family for establishing dual feasibility and leads to improved dependence on the condition number.

As the exact expression of~\eqref{def:alpha} is quite complicated, we first give a simple and useful corollary and provide a weaker version of Theorem~\ref{thm:main}.

\begin{corollary}\label{cor:alphaM}
Suppose $M(x,\tau)$ is an upper bound of $G(x,\tau)$ for all $0\leq \tau\leq 1$ and $x>0$, then
\begin{equation}\label{def:alphaM0}
\alpha_M( p,d ) := \max_{0\leq \tau\leq 1} \{\alpha: r(\tau) \alpha^{-1} \geq M(1-\alpha^{-1}, \tau)\}
\end{equation}
provides a lower bound of $\alpha_G(p,d)$. Moreover, 
\[
\frac{\lambda_{\max}(\BL)}{\lambda_{d+1}(\BL)} <\alpha_M( p,d )  
\]
guarantees a benign optimization landscape.
\end{corollary}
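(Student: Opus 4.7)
The plan is to derive this corollary as a direct consequence of the master theorem, Theorem~\ref{thm:main}, by comparing the feasibility constraints built from $G$ with those built from its upper bound $M$. No new analytic tool is needed; everything reduces to monotonicity of the feasible sets and a substitution into Theorem~\ref{thm:main}.

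First, I would fix $\tau \in [0,1]$ and verify the pointwise inequality $\alpha_M(p,d,\tau) \leq \alpha_G(p,d,\tau)$, where $\alpha_M(p,d,\tau) := \max\{\alpha: r(\tau)\alpha^{-1} \geq M(1-\alpha^{-1},\tau)\}$ is the $\tau$-slice of~\eqref{def:alphaM0}. If an $\alpha$ satisfies $r(\tau)\alpha^{-1} \geq M(1-\alpha^{-1},\tau)$, then by the hypothesis $M \geq G$ pointwise it automatically satisfies $r(\tau)\alpha^{-1} \geq G(1-\alpha^{-1},\tau)$. So the feasible set in the definition of $\alpha_M(p,d,\tau)$ is contained in the feasible set defining $\alpha_G(p,d,\tau)$, and taking suprema preserves the inclusion. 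Maximizing over $\tau \in [0,1]$ on both sides then yields $\alpha_M(p,d) \leq \alpha_G(p,d)$.

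Second, the ``moreover'' statement is immediate: if $\lambda_{\max}(\BL)/\lambda_{d+1}(\BL) < \alpha_M(p,d)$, then the inequality just established forces $\lambda_{\max}(\BL)/\lambda_{d+1}(\BL) < \alpha_G(p,d)$, and Theorem~\ref{thm:main} applies directly to conclude that every second-order critical point $\BS$ of~\eqref{def:bm} satisfies $\BS\BS^{\top} = \BZ\BZ^{\top}$, i.e., the landscape is benign.

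There is no genuine technical obstacle here; the only care required is to confirm that the inner maxima in~\eqref{def:alpha} and~\eqref{def:alphaM0} are attained so that the monotone comparison of feasible sets actually translates into an inequality of maxima. This relies on the monotonicity remark following Theorem~\ref{thm:main}: since $G(x,\tau)$ is increasing in $x>0$, the map $\alpha \mapsto G(1-\alpha^{-1},\tau)$ is nondecreasing in $\alpha$, while $\alpha \mapsto r(\tau)\alpha^{-1}$ strictly decreases (note $r(\tau) \geq p-d > 0$ for all admissible $(p,d)$), so the feasible region at fixed $\tau$ is an interval with a well-defined right endpoint; the same qualitative behavior is inherited by any reasonable majorant $M$ one constructs in practice. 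The real value of the corollary is therefore practical rather than technical: it licenses replacing the complicated supremum defining $G(x,\tau)$ in~\eqref{def:rqg} by a tractable majorant $M(x,\tau)$, producing closed-form, easily comparable lower bounds on $\alpha_G(p,d)$ that can be matched against existing thresholds such as~\eqref{eq:l25}.
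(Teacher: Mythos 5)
Your argument is correct and is essentially the paper's proof: the paper also observes that the maximizer $(\tau_M,\alpha_M(p,d))$ of~\eqref{def:alphaM0} satisfies $r(\tau_M)\alpha_M(p,d)^{-1} \geq M(1-\alpha_M(p,d)^{-1},\tau_M) \geq G(1-\alpha_M(p,d)^{-1},\tau_M)$, i.e., it is feasible for the $G$-constraint, which is exactly your feasible-set inclusion, and the ``moreover'' part follows from Theorem~\ref{thm:main} in both cases. No substantive difference.
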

\begin{proof}
The proof of this corollary is simple: suppose $(\tau_M, \alpha_M(p,d))$ is the maximizer to~\eqref{def:alphaM0}, then
\[
r(\tau_M)\alpha_M(p,d)^{-1} \geq M(1-\alpha_M(p,d)^{-1}, \tau_M) \geq G(1-\alpha_M(p,d)^{-1}, \tau_M)
\]
and thus it holds that $\alpha_{M}(p,d) \leq \alpha_G(p,d).$
\end{proof}

Next, we give a simple and explicit lower bound of $\alpha_G(p,d)$ by finding an upper bound of $G(x,\tau).$ By the definition of $G(x,\tau)$, we can see that 
\begin{equation}%\label{def:rqg}
\begin{aligned}
G(x,\tau) & \leq 2( q(\tau) + d\tau^2).
\end{aligned}
\end{equation}
which follows from dropping all the negative terms  in~\eqref{def:rqg} and using $u\leq w\leq v\leq 1.$ This upper bound is clean because it does not involve $x.$
Combining this upper bound of $G(x,\tau)$ with the master theorem and Corollary~\ref{cor:alphaM}, we have the following result.

%For $p\geq d+2$ and $d\geq 2$, here is a simple bound.
\begin{theorem}{\bf (Estimation of $\alpha_G(p,d)$ for $d\geq 2,p\geq d+3$)}\label{thm:alphaM}
The function $G(x,\tau)$ has an upper bound
\[
G(x,\tau) \leq 2(q(\tau)+d\tau^2) =:M(x,\tau).
\]
Then the landscape is benign if
\[
\frac{\lambda_{\max}(\BL)}{\lambda_{d+1}(\BL)} < \alpha_M(p,d)
\]
where
\begin{align*}
\alpha_M(p,d) & :=   \max_{0\leq \tau\leq 1}\alpha_M(p,d,\tau),  \\
\alpha_M(p,d,\tau) & : = \frac{r(\tau)}{2(q(\tau)+d\tau^2)}  = \frac{1}{2}\cdot \frac{2(d-1)\tau^2+p-d}{2(d-1)\tau^2-2(d-1)\tau + d}. 
\end{align*}
The value $\alpha_{M}(p,d)$ has an explicit form:
\begin{equation}\label{def:alphaM}
\alpha_M(p,d) : = \frac{\tau_*}{2\tau_{*} - 1},~~~~~
\tau_*(p,d) =  \frac{1}{2}\left( \frac{2d-p}{d-1} + \sqrt{ \frac{(2d-p)^2}{(d-1)^2} + \frac{2(p-d)}{d-1} }\right).
\end{equation}
In particular, it holds that $\tau_*(d+2,d) = 1$ and $\alpha_{M}(d+2,d)=0,~d\geq 2$ and thus the bound provides a non-trivial estimation for $p\geq d+3$, i.e., 
\[
\alpha_M(p,d) > 1,~~\forall p \geq d+3,~d\geq 2.
\]
Moreover, $\alpha_M(p,d)$ is an increasing function in $p$ by its definition.
\end{theorem}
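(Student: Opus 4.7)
My plan is to carry out the argument in three conceptually clean stages: (i) establish the pointwise bound $G(x,\tau) \leq M(x,\tau)$ for all admissible $x,\tau$; (ii) translate this into the concrete optimization problem defining $\alpha_M(p,d,\tau)$ via Corollary~\ref{cor:alphaM}; and (iii) perform the one-dimensional maximization over $\tau \in [0,1]$ to recover the closed-form expression in~\eqref{def:alphaM}.

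For (i), I would start with the supremum defining $G(x,\tau)$ in~\eqref{def:rqg} and first discard the nonpositive term $-\frac{1}{x}\cdot\frac{(1-dv)_+ u^2}{d v^2}$. The remaining two positive terms have the shape $\frac{u}{v}(2-\frac{pu}{p-d})$ and $\frac{w}{v}(2-\frac{pdw}{pd-1})$; for each, I would drop the subtracted quadratic and use $u \le v$, $w \le v$ to get a bound of $2$. Since $q(\tau) \ge 0$ on $[0,1]$ for $d \ge 2$ (easy check: $q(0)=d$, $q(1)=0$, and $q$ is decreasing on $[0,1]$ when $d\ge 2$), combining these estimates yields $G(x,\tau) \le 2q(\tau) + 2d\tau^2 = M(x,\tau)$, independent of $x$.

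For (ii), since $M(x,\tau)$ does not depend on $x$, the constraint in the definition of $\alpha_M(p,d,\tau)$ from Corollary~\ref{cor:alphaM} reduces to $r(\tau)\alpha^{-1} \geq 2(q(\tau)+d\tau^2)$, and I read off $\alpha_M(p,d,\tau)=\frac{r(\tau)}{2(q(\tau)+d\tau^2)}$; computing $q(\tau)+d\tau^2 = 2(d-1)\tau^2 - 2(d-1)\tau + d$ produces the stated rational-function form. For (iii), I would differentiate this ratio in $\tau$; setting the numerator of the derivative to zero reduces, after expanding and cancelling the factor $4(d-1)$, to the quadratic $2(d-1)\tau^2 - 2(2d-p)\tau - (p-d)=0$, whose positive root is exactly $\tau_*(p,d)$. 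To recover the clean form $\alpha_M(p,d)=\tau_*/(2\tau_*-1)$, rather than substituting the explicit radical I would use the first-order relation $2(d-1)\tau_*^2 = 2(2d-p)\tau_* + (p-d)$ to linearize both numerator and denominator in $\tau_*$; the identity $\frac{r(\tau_*)}{2(q(\tau_*)+d\tau_*^2)}=\frac{\tau_*}{2\tau_*-1}$ then amounts to checking that $2(d-1)\tau^2 - 2(2d-p)\tau -(p-d)$ is precisely the difference of the two cross-multiplications, which is immediate by comparing coefficients.

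The boundary and monotonicity statements follow by direct computation: at $p=d+2$ the discriminant $\frac{(2d-p)^2}{(d-1)^2} + \frac{2(p-d)}{d-1}$ collapses to $\frac{d^2}{(d-1)^2}$, giving $\tau_*(d+2,d)=1$; and for each fixed $\tau$ the numerator of $\alpha_M(p,d,\tau)$ is strictly increasing in $p$ while its denominator is independent of $p$, so $\alpha_M(p,d)\ge \alpha_M(p,d,1) = \frac{p+d-2}{2d} > 1$ whenever $p\ge d+3$. I expect no serious obstacle; the trickiest (still elementary) piece is the algebraic simplification in step (iii), where naive substitution of the radical form of $\tau_*$ is unwieldy and the key move is to invoke the first-order optimality condition to linearize.
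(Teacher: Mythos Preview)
Your proposal is correct and follows essentially the same approach as the paper. The only cosmetic differences are: (a) for the bound $G(x,\tau)\le 2(q(\tau)+d\tau^2)$ you work directly from the supremum definition of $G$ (matching the paper's remark just before the theorem), whereas the paper's formal proof in Section~\ref{ss:proof_4} bounds the underlying quantity $g(\BS,x,\tau)$ via Cauchy--Schwarz on the matrix norms and then invokes Lemma~\ref{lem:alpha_find}; and (b) for the simplification $\alpha_M(p,d,\tau_*)=\tau_*/(2\tau_*-1)$ you cross-multiply and identify the difference with the first-order quadratic, while the paper uses the equivalent observation that at a critical point of a ratio one has $r/g = r'/g'$, giving $\frac{1}{2}\cdot\frac{r'(\tau_*)}{q'(\tau_*)+2d\tau_*}=\frac{4(d-1)\tau_*}{4(d-1)(2\tau_*-1)}$.
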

\begin{remark}
Note that the higher the noise level, the larger the condition number of $\BL$, and thus a larger $\alpha_G(p,d)$ means the optimization landscape is more robust against the noise in the data for any given pair of $(p,d)$.
Apparently, this has already improved the existing work~\cite{L25}  by the author because for $\tau=1$, then
\[
\alpha_G(p,d) \geq \alpha_M(p,d) = \max_{0\leq \tau\leq 1} \frac{r(\tau)}{2(q(\tau)+d\tau^2)} > \alpha_M(p,d,1) = \frac{r(1)}{2(q(1)+d)} = \frac{ p + d - 2}{ 2d }
\]
where $q(1) = 0$, $r(1) = p + d-2$ and the right hand side is exactly the bound in~\cite{L25}. 
\end{remark}
\subsection{Exact computation of $\alpha_G(p,d)$} \label{ss:main1}

Corollary~\ref{cor:alphaM} implies a lower bound $\alpha_M(p,d)$ of $\alpha_G(p,d)$. In this section, we will present the exact formula of $G(x,\tau)$ and $\alpha_G(p,d)$.
The proposition gives $G(x,\tau)$ which is a piece-wise function in $x$ for any given $\tau.$
\begin{proposition}[\bf Exact formula of $G(x,\tau)$ in~\eqref{def:rqg}]\label{prop:gfun}
For $d = 1$, then
\begin{equation}\label{def:Gfun1}
G(x,\tau) = \{x(1-\tau^2)^2+2\tau^2\}\vee\frac{p-1}{p}.
\end{equation}
For $d\geq 2$, $p\geq d+1$, then
\begin{equation}\label{def:Gfun}
G(x,\tau) = 
\begin{dcases}
2q(\tau) + 2d\tau^2 - \dfrac{1}{xd}, & {\rm if}~~x dq(\tau) \geq \max\left\{1, \frac{1}{h(\tau)}\right\}, \\
 xdq^2(\tau) + 2d\tau^2, & {\rm if}~~ 2 - h(\tau) \leq xdq(\tau) \leq 1, \\
 2q(\tau)+2d\tau^2 - q(\tau)h(\tau), & {\rm otherwise}.
\end{dcases}
\end{equation}
where
\begin{equation}\label{def:hfun}
h(\tau) := 
\begin{dcases}
2 - \dfrac{d}{d+1} + \dfrac{pd}{pd-1}\cdot \dfrac{\tau^2}{q(\tau)}, & p = d+1, \\
 \dfrac{p}{d(p-d)} +\dfrac{pd}{pd-1}\cdot\frac{\tau^2}{q(\tau)}, & p\geq d+2. \\
 \end{dcases}
\end{equation}
\end{proposition}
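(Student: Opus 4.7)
The plan is to explicitly solve the finite-dimensional maximization problem defining $G(x,\tau)$ by performing the inner maximization analytically. A useful first observation is that the objective splits into a part depending only on $(u,v)$ and a part depending only on $(w,v)$: writing the integrand as $T_1(u,v) + T_2(w,v) + T_3(u,v)$ with $T_2$ the $d\tau^2$-term, the constraint $u\leq w\leq v$ couples the variables only through the feasible range of $w$, so for fixed $(u,v)$ the maximization over $w\in[u,v]$ can be carried out in closed form.

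The inner maximization over $w$ is a one-dimensional concave quadratic: the function $w\mapsto d\tau^2\cdot\frac{w}{v}\bigl(2-\frac{pdw}{pd-1}\bigr)$ has unconstrained maximum at $w_0=\frac{pd-1}{pd}$, so the maximizer on $[u,v]$ equals $w_0$ when $u\leq w_0\leq v$, and otherwise lies at a boundary ($w=v$ when $v\leq w_0$, $w=u$ when $u\geq w_0$). Likewise $T_1+T_3$ is a concave quadratic in $u$ (whose negative curvature gets the additional $(1-dv)_+/(xdv^2)$ contribution from $T_3$), so the inner maximization over $u$ has an explicit interior optimum that is then clamped to $[0,v]$ as needed. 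After these two inner optimizations, the remaining problem reduces to a one-dimensional maximization over $v\in(0,1]$.

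The heart of the argument is the outer maximization over $v$, which reveals three distinguished regimes corresponding to the three branches in~\eqref{def:Gfun}. The corner regime $v\to 0^+$ with $w/v\to 1$ and $u=sv$ is the simplest: in this limit $(1-dv)_+\to 1$, $T_2\to 2d\tau^2$, and $T_1+T_3\to 2q(\tau)s-s^2/(xd)$, which over $s\in[0,1]$ is maximized at $s^*=\min\{1,xdq(\tau)\}$. This produces two of the branches: the clamped case $s^*=1$ yields $2q(\tau)+2d\tau^2-1/(xd)$ (first branch), while the interior case $s^*=xdq(\tau)$ yields $xdq^2(\tau)+2d\tau^2$ (second branch). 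The third branch $2q(\tau)+2d\tau^2-q(\tau)h(\tau)$ arises from a competing candidate in which $v$ is bounded away from $0$: $w$ sits at the interior $w_0=(pd-1)/(pd)$ and $u$ at its constrained interior optimum, with $v$ then chosen to balance the cost $(1-dv)_+u^2/(dv^2)$ against the gains in $T_1+T_2$. The threshold $h(\tau)$ in~\eqref{def:hfun} encodes this balance, and the two sub-cases $p=d+1$ and $p\geq d+2$ reflect whether the $u$-maximizer reaches the interior unconstrained value $\tfrac{p-d}{p}$ or is clamped at $u=w_0$. The $d=1$ case collapses much of this analysis — several sub-cases coalesce because $(d-2)\tau^2$ disappears from $q$ — leaving only the two branches in~\eqref{def:Gfun1}.

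The main technical obstacle is not any single calculation but the careful bookkeeping required to track all active constraints at once. The maximizer's location depends on the relative ordering of three thresholds on $v$ (namely $1/d$, $(pd-1)/(pd)$, and an $x$-dependent quantity produced by the $u$-optimization), and the three cases of~\eqref{def:Gfun} must patch together continuously across the seams $xdq(\tau)=1$ and $xdq(\tau)=2-h(\tau)$. Verifying continuity and the completeness of the case partition — especially in the $p=d+1$ sub-regime, where an additional $u$-constraint becomes active and changes the effective value of $h(\tau)$ — is where the bulk of the work lies.
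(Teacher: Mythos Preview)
Your sequential-maximization plan is reasonable in outline, and you correctly identify the first two branches of~\eqref{def:Gfun} as arising from the corner $v\to 0^+$ with $s=w/v\to 1$. But you miss the decisive simplification and, as a consequence, mis-identify where the third branch comes from.

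The paper's key move is the change of variables $t=u/v$, $s=w/v$ (so $0\le t\le s\le 1$), after which the objective becomes \emph{affine in $v$} on each of the intervals $[0,1/d]$ and $[1/d,1]$. Hence the supremum over $v$ is attained only at $v=0$ or $v=1/d$; there is no interior balancing of $(1-dv)_+u^2/(dv^2)$ against $T_1+T_2$ to carry out, and the ``three thresholds on $v$'' you anticipate never materialize. The whole computation reduces to evaluating two explicit $(s,t)$-maximizations and comparing them.

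Your description of the third branch is then concretely wrong. It is exactly the $v=1/d$ candidate, and since $w_0=(pd-1)/(pd)>1/d$ for $d\ge 2$, the constraint $w\le v$ is active there: $w=v=1/d$, not $w=w_0$. The dichotomy between $p=d+1$ and $p\ge d+2$ in~\eqref{def:hfun} reflects whether the $t$-optimizer $d(p-d)/p$ is interior to $[0,1)$ or clamped at $t=1$ (equivalently $u=v=1/d$); it is a clamping at $u=v$, not at $u=w_0$. The configuration you propose---$v$ near $1$ with $w=w_0$ and $u=(p-d)/p$---is feasible but strictly dominated by the $v=1/d$ point, so it does not yield the third branch.
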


By using the expression of $G$ in Proposition~\ref{prop:gfun} with Theorem~\ref{thm:main}, we have the following detailed version of Theorem~\ref{thm:main} by solving the inequality in~\eqref{def:alpha}, i.e.,
\[
r(\tau)\alpha^{-1} \geq G(1-\alpha^{-1},\tau)
\]
for each $0\leq \tau\leq 1.$ Since $G(x,\tau)$ is piecewise in $x$ for any $0\leq \tau\leq 1$, we find the corresponding $\alpha_G(p,d,\tau)$ by searching for the largest $\alpha$ that satisfies $r(\tau)\alpha^{-1} \geq G(1-\alpha^{-1},\tau)$.
The following theorem is a detailed version of Theorem~\ref{thm:main}.

\begin{theorem}[\bf Exact form of $\alpha_G(p,d)$ for $p \geq d+1,d\geq 1$]\label{thm:main2}
For the $G(x,\tau)$ defined in~\eqref{def:Gfun1} and~\eqref{def:Gfun} with $p\geq d+1$, the solution to 
\[
r(\tau)\alpha^{-1} \geq G(1-\alpha^{-1},\tau)
\] 
for fixed $0\leq \tau\leq 1$ is given by the followings.

%\noindent For $d=1$, then
%\begin{equation}\label{eq:alphaG1}
%\begin{aligned}
%\alpha_G(p,1,\tau) = 
%\begin{dcases}
% \frac{p-1+(1-\tau^2)^2}{2\tau^2+(1-\tau^2)^2}, & (1-\alpha^{-1})(1-\tau^2)^2+2\tau^2 \geq \frac{p-1}{p}, \\
%p, & (1-\alpha^{-1})(1-\tau^2)^2+2\tau^2 \leq \frac{p-1}{p}. \\
%\end{dcases}
%\end{aligned}
%\end{equation}
\noindent For $d=1$, then $\alpha_G(p,1,\tau)$ is given as follows.

\begin{enumerate}
\item $\alpha_G(p,1,\tau) = \alpha$ if $\alpha =  \frac{p-1+(1-\tau^2)^2}{2\tau^2+(1-\tau^2)^2}$ satisfies
\begin{equation}\label{eq:alphaG1:1}
(1-\alpha^{-1})(1-\tau^2)^2+2\tau^2 \geq \frac{p-1}{p};
\end{equation}

\item otherwise, $\alpha_G(p,1,\tau) = p$ if $\alpha =  p$ satisfies
\begin{equation}\label{eq:alphaG1:2}
(1-\alpha^{-1})(1-\tau^2)^2+2\tau^2 \geq \frac{p-1}{p}.
\end{equation}
\end{enumerate}

For $p\geq d+1$ and $d\geq 2$, there are two scenarios.
For $h(\tau) \geq 1$ where $h(\tau)$ is defined in~\eqref{def:hfun}, then
\begin{enumerate}
\item $\alpha_G(p,d,\tau) = \alpha$ if  $\alpha$ is the larger root of the equation $\frac{r(\tau)}{ \alpha}= 2(q(\tau) + d\tau^2) - \frac{1}{(1-\alpha^{-1})d}$, and satisfies
\begin{equation}\label{eq:alphaG2:1}
1 - \alpha^{-1} \geq \frac{1}{dq(\tau)};
\end{equation}

\item otherwise, $\alpha_G(p,d,\tau) = \alpha$ if $\alpha =  \frac{ dq^2(\tau)+r(\tau)    }{ dq^2(\tau) + 2d\tau^2}$ satisfies
\begin{equation}\label{eq:alphaG2:2}
 \frac{2 - h(\tau)}{dq(\tau)} \leq 1-\alpha^{-1}  \leq  \frac{1}{dq(\tau)};
\end{equation}

\item otherwise, $\alpha_G(p,d,\tau) = \alpha$ if $\alpha = \frac{r(\tau)}{2 (q(\tau)+d\tau^2) - q(\tau) h(\tau)  }$ satisfies
\begin{equation}\label{eq:alphaG2:3}
1 - \alpha^{-1} \leq \frac{2 - h(\tau)}{dq(\tau)}.
\end{equation}
\end{enumerate}
%\begin{equation}\label{eq:alphaG2}
%\begin{aligned}
%\alpha_G(p,d,\tau) & = 
%\begin{dcases}
%\max_{\alpha}\left\{\alpha: \frac{r(\tau)}{ \alpha} \geq 2(q(\tau) + d\tau^2) - \frac{1}{(1-\alpha^{-1})d} \right\},
%& {\rm if}~~ 1 - \alpha^{-1} \geq \frac{1}{dq(\tau)}, 
%\\
% \frac{ dq^2(\tau)+r(\tau)    }{ dq^2(\tau) + 2d\tau^2}, 
%& {\rm if}~~
% \frac{2 - h(\tau)}{dq(\tau)} \leq 1-\alpha^{-1}  \leq  \frac{1}{dq(\tau)},  
%  \\
%\frac{r(\tau)}{2 (q(\tau)+d\tau^2) - q(\tau) h(\tau)  }, & {\rm if}~~1 - \alpha^{-1} \leq \frac{2 - h(\tau)}{dq(\tau)}.
%\end{dcases} 
%\end{aligned}
%\end{equation}
For $h(\tau) \leq 1$,  then
\begin{enumerate}
\item $\alpha_G(p,d,\tau) = \alpha$ if $\alpha$  is the larger root of the equation $ \frac{r(\tau)}{ \alpha}= 2(q(\tau) + d\tau^2) - \frac{1}{(1-\alpha^{-1})d}$, and satisfies
\begin{equation}\label{eq:alphaG3:1}
 1 - \alpha^{-1} \geq \frac{1}{dq(\tau)h(\tau)};
\end{equation}

\item otherwise, $\alpha_G(p,d,\tau) = \alpha$ if $\alpha =  \frac{r(\tau)}{2 (q(\tau)+d\tau^2) - q(\tau)h(\tau) }$ satisfies
\begin{equation}\label{eq:alphaG3:2}
  1-\alpha^{-1}  \leq  \frac{1}{dq(\tau)h(\tau)}.
\end{equation}

\end{enumerate}
%\begin{equation}\label{eq:alphaG3}
%\begin{aligned}
% \alpha_G(p,d,\tau) & = 
%\begin{dcases}
%\max_{\alpha}\left\{\alpha: r(\tau) \alpha^{-1} \geq 2(q(\tau) + d\tau^2) - \frac{1}{(1-\alpha^{-1})d} \right\},
%& {\rm if}~~ 1 - \alpha^{-1} \geq \frac{1}{dq(\tau)h(\tau) }, \\
%\frac{r(\tau)}{2 (q(\tau)+d\tau^2) - q(\tau)h(\tau) }, &  {\rm if}~~ 1 - \alpha^{-1}  \leq \frac{1}{dq(\tau)h(\tau) }.
%\end{dcases} 
%\end{aligned}
%\end{equation}
Moreover, for $\alpha_G(p,d) = \max_{0\leq \tau\leq 1}\alpha_G(p,d,\tau)$, it satisfies
\begin{align*}
&\alpha_G(p,1) = p,~~ \forall p\geq 2, \\
 &\alpha_G(p,d) > 1,~~ \forall p\geq d+2, d\geq 2 ~{\rm or } ~p = d+1,~d=2,3.
\end{align*}
Suppose 
\[
\frac{\lambda_{\max}(\BL)}{\lambda_{d+1}(\BL)} < \alpha_G(p,d), 
\]
then the landscape is benign.
\end{theorem}
\begin{proof}
To demonstrate how to use Proposition~\ref{prop:gfun} and~\eqref{def:alpha} to derive $\alpha_G(p,d)$, we give the proof for  $d=1$. We leave the case of $d\geq 2$ to Section~\ref{ss:proof_5}.
Note that for $d=1$, we have
\[
G(x,\tau) = \{x(1-\tau^2)^2+2\tau^2\}\vee\frac{p-1}{p}.
\]
For each $\tau$, we aim to find out the largest $\alpha_G(p,1,\tau)$ satisfying the inequality below:
\begin{align*}
& \{(1-\alpha^{-1})(1-\tau^2)^2+2\tau^2\}\vee\frac{p-1}{p} \leq r(\tau)\alpha^{-1} \\
& \Longleftrightarrow \{(1-\alpha^{-1})(1-\tau^2)^2+2\tau^2\}\vee\frac{p-1}{p} \leq (p-1)\alpha^{-1}
\end{align*}
where $r(\tau) = p-1.$
Then we have $\alpha_G(p,1,\tau)$ in~\eqref{eq:alphaG1:1}-\eqref{eq:alphaG1:2}.
%\[
%\alpha_G(p,1,\tau) = 
%\begin{dcases}
%\frac{p-1+(1-\tau^2)^2}{2\tau^2+(1-\tau^2)^2}, & (1-\alpha^{-1})(1-\tau^2)^2+2\tau^2 \geq \frac{p-1}{p}, \\
%p, & (1-\alpha^{-1})(1-\tau^2)^2+2\tau^2 \leq \frac{p-1}{p}. \\
%\end{dcases}
%\]
The maximum of $\alpha_G(p,1,\tau)$ over $\tau$ is attained at $\tau = 0$ and $\alpha_G(p,1) = p.$
\end{proof}
For $d=1$, Theorem~\ref{thm:main2} reproduces the remarkable result in~\cite{EW24}, and also~\cite{M25}. 
For $d\geq 2$, $\alpha_G(p,d)$ does not have an explicit form. We briefly discuss how this search for $\alpha_G(p,d,\tau)$ works. We take~\eqref{eq:alphaG3:1}-\eqref{eq:alphaG3:2} as an example: for any $\tau$ with $h(\tau)\leq 1$, we first solve a quadratic equation for the larger $\alpha_*$ satisfying:
\[
\frac{r(\tau)}{\alpha} = 2(q(\tau)+d\tau^2) - \frac{1}{(1-\alpha^{-1})d}.
\]
Then we check if $\alpha_*$ satisfies the constraints $1 - \alpha_*^{-1} \geq (dq(\tau)h(\tau))^{-1}$. If it satisfies, then we set $\alpha_G(p,d,\tau) = \alpha_*$; otherwise, we go to the next case. We later show that $\alpha_G(p,d)$ is guaranteed to attain in the case~\eqref{eq:alphaG2:1} or~\eqref{eq:alphaG3:1} in Theorem~\ref{thm:main2-sim2} if $p$ is slightly larger than $d$.
This procedure does not yield a simple closed form but we can easily compute it numerically, as shown in the table.

\begin{table}[h!]
\centering
\caption{Numerical values of $\alpha_G(p,d)$ for different $p$ and $d$}
\begin{tabular}{c|cccccccccc}
\hline
$d \backslash p$ 
& 2 & 3 & 4 & 5 & 6 & 7 & 8 & 9 & 10 & 11 \\
\hline
1 & 2 & 3 & 4 & 5 & 6 & 7 & 8 & 9 & 10 & 11 \\
2 & $\times$ & 1.1141 & 1.4831 & 1.8465 & 2.2071 & 2.5736 & 2.9438 & 3.3216 & 3.7051 & 4.0925 \\
3 & $\times$ & $\times$ & 1.0103 & 1.2309 & 1.4721 & 1.6946 & 1.9218 & 2.1605 & 2.4087 & 2.6632 \\
4 & $\times$ & $\times$ & $\times$ & $<1$ & 1.1447 & 1.3177 & 1.4778 & 1.6414 & 1.8161 & 2.0000 \\
5 & $\times$ & $\times$ & $\times$& $\times$ & $<1$& 1.1025 & 1.2355 & 1.3600 & 1.4869 & 1.6227 \\
\hline
\end{tabular}
\end{table}

We also provide a comparison of $\alpha_G(p,d)$ with $\alpha_M(p,d)$ in~\eqref{def:alphaM} and $\alpha_G(p,d,1)$ in~\cite[Theorem 3]{L25} in Figure~\ref{fig:comparison_v0} which shows a clear improvement $\alpha_G(p,d) \geq \alpha_M(p,d) > \alpha_G(p,d,1)$. We also compare Theorem~\ref{thm:main2} with the existing literature in the table below.

\begin{figure}[h!]
\centering
\includegraphics[width=100mm]{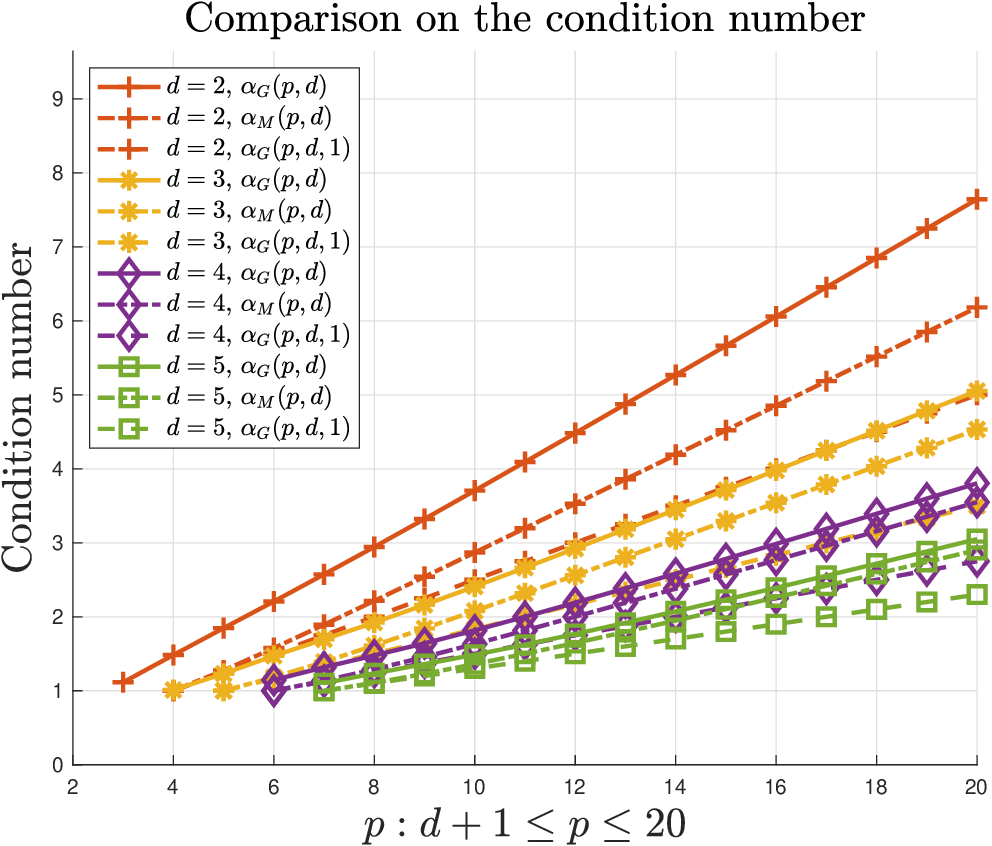}
\caption{Solid line: our bound $\alpha_G(p,d)$ in~\eqref{def:alpha}; dashed dot line: $\alpha_M(p,d)$; dashed line: the state-of-the-art bound $\alpha_G(p,d,1)$. The current bound $\alpha(p,d)$ improves the ones in the existing works.}
\label{fig:comparison_v0}
\end{figure}

\begin{table}[h!]
\centering
\caption{Comparison with the existing works}
\begin{tabular}{Sc|Sc|Sc}
\hline
Reference & $d = 1$ & $d \geq 2 $ \\
\hline 
\cite{L25} 
& $\dfrac{\lambda_{\max}(\BL)}{\lambda_2(\BL)} \leq \dfrac{p-1}{2}$, $p\geq 4$ 
& $\dfrac{\lambda_{\max}(\BL)}{\lambda_{d+1}(\BL)} \leq \dfrac{p+d-2}{2d}$, $p \geq d + 3$ \\
\hline
\cite{EW24,M25} 
& $\dfrac{\lambda_{\max}(\BL)}{\lambda_2(\BL)} < p$, $p\geq 2$ 
& Not covered\\
\hline
This work 
& $\dfrac{\lambda_{\max}(\BL)}{\lambda_2(\BL)} < p$, $p\geq 2$ 
& \makecell[l]{%
$\dfrac{\lambda_{\max}(\BL)}{\lambda_{d+1}(\BL)} < \alpha_M(p,d) < \alpha_G(p,d)$  \\
where $\alpha_G(p,d)$ and $\alpha_M(p,d)$ are in~\eqref{def:alpha} and~\eqref{def:alphaM}. \\
(a) $\alpha_G(p,d)>1$ only if $p \geq d+2$ with $d\geq 4 $,  \\
~~~~ and $p \geq d+1$ with $d=2$ and $3$; \\
(b) $\alpha_M(p,d)>1$ only if $p \geq d+3$ with $d\geq 2$; \\
(c) $\alpha_M(p,d) > \alpha_G(p,d,1) = \dfrac{p+d-2}{2d}. $ \\
} \\
\hline
\end{tabular}
\end{table}

Now we provide the examples of $\alpha_G(p,d,\tau)$ for  $\tau = 0$ and 1. In particular, the following corollary implies the author's earlier work~\cite{L25} for $\tau=1$. 

%\[
%(2d - d^{-1})\alpha^2 - (p+d)\alpha + (p-d) \leq 0
%\]

\begin{corollary}
For $\tau = 0$, then
\[
q(0)= d,~~h(0) = \frac{p}{d(p-d)} < 1,~~\forall d\geq 2,~~p\geq d+2.
\]
Then
\begin{align*}
 \alpha_G(p,d,0)  = 
\begin{dcases}
\frac{p+d + \sqrt{(p+d)^2 - 4 (2 d - d^{-1}) (p-d) }}{2 (2d - d^{-1}) }, & \text{if }~~\alpha^{-1} \leq 1 - \frac{p-d}{pd}, \\
\frac{p-d}{2 d -  \frac{p}{p-d} }, & \text{if }~~ \alpha^{-1} \geq 1 - \frac{p-d}{pd},
\end{dcases} 
\end{align*}
where $q(0) = d$ and $r(0) = p-d$.

For $\tau = 1$, then
\[
q(1)= 0,~~\lim_{\tau\rightarrow 1^-}\frac{1}{q(\tau)} = +\infty,~~ \lim_{\tau\rightarrow 1^-}h(\tau) = +\infty.
\]
Therefore, we have
\[
\alpha_G(p,d,1) = \frac{p+d-2}{2d},~~~ -\infty <\alpha^{-1} < \infty,
\]
where  $r(1) = p+d-2$. In particular, $\alpha_G(p,d,1)=\alpha_M(p,d,1)$ holds.
\end{corollary}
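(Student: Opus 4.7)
The plan is to specialize Theorem~\ref{thm:main2} at the two endpoints, which amounts to plug-and-simplify, with some care at $\tau=1$ where $q(\tau)\to 0$ introduces apparent singularities in the piecewise formulas.

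First I would treat $\tau = 0$. Direct evaluation gives $q(0) = d$ and $r(0) = p-d$, while the $p\geq d+2$ branch of~\eqref{def:hfun} yields $h(0) = p/(d(p-d))$; this satisfies $h(0) \leq 1$ for $d\geq 2,\ p\geq d+2$ (since the inequality is equivalent to $p \geq d^2/(d-1) = d + d/(d-1)$, and $d/(d-1)\leq 2$ for $d\geq 2$), so formula~\eqref{eq:alphaG3} applies with threshold $1/(d\,q(0)\,h(0)) = (p-d)/(pd)$. Substituting into the second branch of~\eqref{eq:alphaG3} immediately gives $\alpha_G(p,d,0) = (p-d)/(2d - p/(p-d))$, matching the corollary. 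For the first branch, the defining inequality $(p-d)\alpha^{-1} \geq 2d - 1/((1-\alpha^{-1})d)$ clears, after multiplying by $\alpha(\alpha-1)d$, to the quadratic
\[
(2d - d^{-1})\alpha^2 - (p+d)\alpha + (p-d) = 0,
\]
and the larger root is exactly $(p+d + \sqrt{(p+d)^2 - 4(2d-d^{-1})(p-d)})/(2(2d-d^{-1}))$ as claimed; to conclude this branch I would check via the discriminant that both roots are real and positive, and that the larger one is the maximal $\alpha$ satisfying the original inequality.

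For $\tau = 1$ I would handle the apparent singularity by taking a one-sided limit. A direct calculation confirms $q(1) = (d-2) - 2(d-1) + d = 0$ and $r(1) = p + d - 2$, and the $p\geq d+2$ branch of~\eqref{def:hfun} gives the asymptotics $h(\tau) \sim (pd/(pd-1)) \cdot \tau^2/q(\tau) \to +\infty$ as $\tau\to 1^-$, placing us in the $h(\tau)\geq 1$ regime of~\eqref{eq:alphaG2}. I would then eliminate the two outer sub-cases in turn: the first threshold $1/(dq(\tau))$ diverges, so the defining inequality has no solution for any finite $\alpha$; and the third threshold $(2-h(\tau))/(dq(\tau)) \sim -p/((pd-1)q(\tau)^2) \to -\infty$, so its constraint $1-\alpha^{-1} \leq -\infty$ becomes vacuous. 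Only the middle sub-case survives, giving
\[
\alpha_G(p,d,\tau) = \frac{dq^2(\tau) + r(\tau)}{dq^2(\tau) + 2d\tau^2} \;\longrightarrow\; \frac{p+d-2}{2d} \quad \text{as } \tau \to 1^-,
\]
which matches $\alpha_M(p,d,1) = (p+d-2)/(2d)$ from Theorem~\ref{thm:alphaM}.

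The only delicate step, and therefore the main obstacle if any, is confirming that the limit $\tau\to 1^-$ correctly interprets the piecewise formulas of Theorem~\ref{thm:main2} at the degenerate point $\tau=1$. The asymptotic analysis above shows that both outer sub-cases collapse to empty parameter regions, so the middle expression is unambiguously selected, and since that expression is a ratio of quantities continuous at $\tau=1$, its limit is the correct value. Everything else reduces to routine algebra on explicit rational expressions.
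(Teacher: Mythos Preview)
Your proposal is correct and follows exactly the approach the paper intends: the corollary is stated without proof in the paper because it is a direct specialization of Theorem~\ref{thm:main2}, and your plug-and-simplify computations at $\tau=0$ (using the $h(\tau)\leq 1$ branch~\eqref{eq:alphaG3}) and the limit argument at $\tau\to 1^-$ (collapsing to the middle sub-case of~\eqref{eq:alphaG2}) are precisely the intended derivation. One minor wording point: in the $\tau=1$ analysis you call the constraint $1-\alpha^{-1}\leq -\infty$ ``vacuous,'' but you mean that the third sub-case becomes \emph{empty} (never satisfied), which is what forces the middle expression to be selected; also note that at the boundary $d=2,\ p=4$ one has $h(0)=1$ exactly (the paper's strict inequality is slightly loose), but this does not affect the formula since~\eqref{eq:alphaG2} and~\eqref{eq:alphaG3} agree there.
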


Since $\alpha_G(p,d)$ is hard to visualize, the theorem below provides an approximate and simple lower bound of $\alpha_G(p,d)$. 

\begin{theorem}[\bf Simplification of Theorem~\ref{thm:main2} for $p\geq d+2+O(\sqrt{d})$]\label{thm:main2-sim2}
Suppose
\begin{equation}\label{def:pthm}
p \geq \left( 1 + \dfrac{2\sqrt{d}}{d-1}\right) d +2, ~~\forall d\geq 2,
\end{equation}
then it holds
\begin{align*}
\alpha_G(p,d) %& =  %\max_{0\leq \tau\leq 1, \alpha} \left\{ r(\tau) \alpha^{-1} \geq 2(q(\tau) + d\tau^2) - \frac{1}{(1-\alpha^{-1})d}\right\}  \\
&  \geq  \max_{0\leq \tau\leq 1} \frac{r(\tau) + d^{-1}}{2(q(\tau)+ d\tau^2) - d^{-1}} = \frac{\tau_*}{2\tau_*-1}
\end{align*}
where the maximizer $\tau_*$ is the larger root of the quadratic equation
\[
 \tau^2 - \frac{(4d-2p-3d^{-1})\tau}{2(d-1)} -  \frac{p-d+d^{-1}}{2(d-1)} = 0.
\]
In other words, under~\eqref{def:pthm}, $\alpha_G(p,d)$ is attained in the case~\eqref{eq:alphaG2:1} or~\eqref{eq:alphaG3:1}.
\end{theorem}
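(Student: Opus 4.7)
My plan for proving Theorem~\ref{thm:main2-sim2} is to work entirely inside the first case of \eqref{eq:alphaG2} or \eqref{eq:alphaG3}, produce an explicit feasible $\alpha$, optimize it over $\tau$, and then verify that the hypothesis~\eqref{def:pthm} keeps the maximizing $\tau_*$ within the regime where the first case is active. Writing $R:=r(\tau)$ and $A:=q(\tau)+d\tau^2$, the defining inequality of the first case, $R/\alpha\ge 2A-1/(d(1-\alpha^{-1}))$, is equivalent (after multiplying by $d\alpha(\alpha-1)>0$) to the quadratic sub-level inequality
\[
(2A-d^{-1})\alpha^{2}-(2A+R)\alpha+R\le 0 .
\]

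The key algebraic step is to observe that substituting the candidate value
\[
\alpha=\frac{R+d^{-1}}{2A-d^{-1}}
\]
into the left-hand side above collapses the numerator to $-d^{-1}(2A-d^{-1})$, so the expression evaluates to $-d^{-1}<0$. This shows the candidate is strictly feasible, hence $\alpha_G(p,d,\tau)\ge (R+d^{-1})/(2A-d^{-1})$. I would then maximize $f(\tau)=N(\tau)/D(\tau)$ with $N(\tau):=2(d-1)\tau^2+(p-d+d^{-1})$ and $D(\tau):=4(d-1)\tau^2-4(d-1)\tau+(2d-d^{-1})$. Setting $f'=0$ gives $\tau D(\tau)=(2\tau-1)N(\tau)$, and after cancellation this reduces exactly to the quadratic stated in the theorem. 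Since this relation at a critical point rearranges to $N(\tau_*)/D(\tau_*)=\tau_*/(2\tau_*-1)$, the claimed closed form follows immediately, and a quick sign check at $\tau=1/2$ and $\tau=1$ (using $p\ge d+2$) confirms the larger root $\tau_*$ lies in $(1/2,1)$ and realizes a maximum rather than a minimum.

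The main obstacle, and the step that forces the specific hypothesis~\eqref{def:pthm}, is verifying that at $\tau=\tau_*$ the candidate $\alpha$ in fact lies in the first case regime of \eqref{eq:alphaG2}/\eqref{eq:alphaG3}, i.e., that $1-\alpha^{-1}\ge 1/(dq(\tau_*))$ when $h(\tau_*)\ge 1$, and $1-\alpha^{-1}\ge 1/(dq(\tau_*)h(\tau_*))$ when $h(\tau_*)\le 1$. Substituting the explicit $\alpha$ gives the constraint
\[
(dq(\tau_*)-1)R \ge 2dq(\tau_*)A-2q(\tau_*)+d^{-1}
\]
in the $h(\tau_*)\ge 1$ case, and an analogous inequality in the other case; each reduces to a polynomial inequality in $(p,d,\tau_*)$. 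Using the critical point relation to eliminate $\tau_*^2$, these inequalities translate into a lower bound on $p-d$ in terms of $d$, and a careful tracking of the dominant terms yields precisely the threshold $p\ge(1+2\sqrt{d}/(d-1))d+2$. This is the calculation I expect to be most delicate, since the bound must be tight enough to match the $2\sqrt{d}/(d-1)$ coefficient; I would handle it by Taylor-expanding around $\tau_*\approx 1$ (justified since~\eqref{def:pthm} forces $\tau_*$ close to $1$) and bounding the residual.

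Finally, to obtain the stated inequality $\alpha_G(p,d)\ge \tau_*/(2\tau_*-1)$ globally, I combine (i)~the feasibility calculation, (ii)~the critical-point evaluation, and (iii)~the first-case verification, and conclude by monotonicity that the maximum of $\alpha_G(p,d,\tau)$ is attained in the first case, confirming the last sentence of the theorem.
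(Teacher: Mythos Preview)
Your steps for producing and optimizing the candidate $\alpha=(r(\tau)+d^{-1})/(2(q(\tau)+d\tau^2)-d^{-1})$ are correct and essentially match the paper's argument; your direct substitution showing the quadratic evaluates to $-d^{-1}$ is in fact a clean alternative to the paper's route via the linearized upper bound $\widetilde{M}(x,\tau)=2(q(\tau)+d\tau^2)-(2-x)/d$ obtained from $x+x^{-1}\ge 2$. The derivation of the quadratic for $\tau_*$ and the closed form $\tau_*/(2\tau_*-1)$ are also right.

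The gap is in your verification of the first-case regime constraint. Your plan to Taylor-expand around $\tau_*\approx 1$ rests on the claim that~\eqref{def:pthm} forces $\tau_*$ close to $1$, and this is false: $\tau_*$ \emph{decreases} as $p$ increases (since $\alpha=\tau_*/(2\tau_*-1)$ is decreasing in $\tau_*$ and increasing in $p$), and~\eqref{def:pthm} in fact pushes $\tau_*$ \emph{below} the explicit threshold $\tau_c:=\sqrt{d}/(\sqrt{d}+1)$, which for $d=2$ is about $0.586$. Moreover, near $\tau=1$ the constraint $dq(\tau)(1-\tau)/\tau\ge 1$ necessarily \emph{fails} because $q(1)=0$, so an expansion at $\tau=1$ cannot work even heuristically. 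What the paper does instead is: (i) show from the quadratic that~\eqref{def:pthm} implies $\tau_*\le\tau_c$; (ii) observe that $q(\tau)(1-\tau)/\tau$ is decreasing and $h(\tau)$ is increasing on $[0,1]$, so it suffices to check the constraint at the single point $\tau_c$; and (iii) evaluate everything explicitly at $\tau_c$ using the identities $\tau_c^2/q(\tau_c)=\sqrt{d}/(2+\sqrt{d})$ and $(1-\tau_c)/\tau_c=1/\sqrt{d}$, which is exactly where the coefficient $2\sqrt{d}/(d-1)$ emerges. You should replace the Taylor-expansion step by this monotonicity-plus-threshold argument.
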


\subsection{Tightness of the bound and future work}\label{ss:main2}
One natural question regarding Theorem~\ref{thm:main} concerns its optimality: what is the sharp dependence of the benign landscape condition on the condition number? In fact, our proof also yields, as a byproduct, a lower bound on the condition number guaranteeing that a specific choice of 
$\BS$ is not a second-order critical point. By using  this bound, we establish a sufficient condition under which every $\BS\in\St(p,d)^{\otimes n}$ satisfying 
$\BZ^{\top}\BS = 0$ fails to be a second-order critical point. Moreover, we construct a family of “twisted states” showing that this condition is also necessary.

\begin{theorem}\label{thm:optimality}
\begin{enumerate}[(a)]
\item For $\BS\in\St(p,d)^{\otimes n}$ such that $\BZ^{\top}\BS = 0$, if
\[
\frac{\lambda_{\max}(\BL)}{\lambda_{d+1}(\BL)} < \frac{2p}{d+1}
\]
then any $\BS\in\St(p,d)^{\otimes n}$ with $\BZ^{\top}\BS= 0$ is not a second-order critical point. 

\item Conversely, there exists an $\BS\in\St(p,d)^{\otimes n}$ with $\BZ^{\top}\BS=0$ and $\BL\succeq 0$ such that $\BS$ is a second-order critical point coexisting with the global minimizer $\BZ$ to~\eqref{def:bm} and the condition number satisfies
\[
\frac{\lambda_{\max}(\BL)}{\lambda_{d+1}(\BL)} = \frac{2p}{d+1}.
\]
\end{enumerate}
\end{theorem}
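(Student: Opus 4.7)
The plan is to use the Riemannian Hessian criterion: $\BS$ is second-order critical on $\St(p,d)^{\otimes n}$ iff $\langle \BL(\BS), \BU\BU^{\top}\rangle \geq 0$ for every tangent $\BU \in T_{\BS}\St(p,d)^{\otimes n}$, where $\BL(\BS) := \widehat{\BLambda}(\BS) - \BA$ and $\BL = \BL(\BZ)$. For part (a) I construct a tangent direction producing a strictly negative Hessian, and for part (b) I build an explicit cyclic twisted state saturating the bound.

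For part (a), the natural test direction at any $\BS$ with $\BZ^{\top}\BS = 0$ is built from $\BZ$ via Gaussian randomization. For a random $d \times p$ matrix $\BR$ with i.i.d.\ standard Gaussian entries, I would set
\[
\BU_i := \BZ_i \BR - \tfrac{1}{2}\bigl(\BZ_i \BR \BS_i^{\top} + \BS_i \BR^{\top} \BZ_i^{\top}\bigr)\BS_i = \BZ_i\BR - \BW_i\BS_i,
\]
the orthogonal projection of $\BZ_i\BR$ onto $T_{\BS_i}\St(p,d)$, where $\BW_i := \tfrac12(\BZ_i\BR\BS_i^{\top} + \BS_i\BR^{\top}\BZ_i^{\top})$. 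The key structural simplification is that $\BL\BZ = 0$ annihilates three of the four pieces in $\BU^{\top}\BL\BU$ upon summation over $i,j$, leaving only the quartic term $\sum_{i,j}\Tr[\BS_i^{\top}\BW_i \BL_{ij}\BW_j\BS_j]$. Using the Gaussian-moment identities $\E[\BR A \BR^{\top}] = \Tr(A)\I_d$, $\E[\BR A \BR] = A^{\top}$, and $\E[\BR^{\top} X \BR^{\top}] = X^{\top}$ together with $\BS_i\BS_i^{\top} = \I_d$, a direct calculation gives $\E[\BU_i\BU_i^{\top}] = (p - (d+1)/2)\I_d$, so the block-diagonal correction collapses cleanly via the identity $\sum_i \Tr(\widehat{\BLambda}(\BT)_{ii}) = \langle\BA,\BT\BT^{\top}\rangle$ to
\[
\E\langle \widehat{\BLambda}(\BS) - \widehat{\BLambda}(\BZ), \BU\BU^{\top}\rangle = -\bigl(p - \tfrac{d+1}{2}\bigr)\bigl(f(\BS) - f(\BZ)\bigr) = -\bigl(p - \tfrac{d+1}{2}\bigr)\langle \BL, \BS\BS^{\top}\rangle,
\]
using that $\BS_i\BS_i^{\top} = \BZ_i\BZ_i^{\top} = \I_d$ kills the $\widehat{\BLambda}(\BZ)$ contribution on the diagonal blocks and that the objective gap equals $\langle\BL,\BS\BS^{\top}\rangle$ by $\BL\BZ=0$. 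The hypothesis $\BZ^{\top}\BS=0$ places every column of $\BS$ in $(\Ker\BL)^{\perp}$, giving the sharp lower bound $\langle\BL,\BS\BS^{\top}\rangle \geq nd\,\lambda_{d+1}(\BL)$. Bounding the surviving quartic trace above by a constant multiple of $\lambda_{\max}(\BL)$ (with the constant determined by Stiefel contractions of $\BZ_i\BZ_j^{\top}$, $\BS_i\BS_j^{\top}$, and their mixtures), the total expectation becomes strictly negative precisely when $\lambda_{\max}(\BL)/\lambda_{d+1}(\BL) < 2p/(d+1)$, so some realization of $\BR$ yields a tangent direction with negative Hessian, contradicting second-order criticality. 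The main obstacle is the coefficient in the quartic bound: its four expanded pieces involve $\|\BS_i\BS_j^{\top}\|_F^{2}$ and $\Tr(\BZ_i^{\top}\BS_i\BS_j^{\top}\BZ_j)$, and pinning down the precise constant $(d+1)$ in the threshold requires careful bookkeeping of Gaussian moments combined with $\BZ^{\top}\BS = 0$ to cancel residual cross-terms.

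For part (b), I would construct an explicit cyclic twisted state. Let $\BZ_i = \I_d$ for all $i$ on the complete graph. Choose a rotation $\BQ \in \mathrm{SO}(p)$ of order $n$ whose fixed subspace is disjoint from the first $d$ coordinate directions, e.g.\ a direct sum of planar rotations $R_{2\pi m_k/n}$ with nonzero frequencies $m_k$ covering the first $d$ coordinates. With $\BE := [\I_d,\bzero] \in \RR^{d\times p}$ and $\BS_i := \BE\BQ^i$, we have $\BS_i\BS_i^{\top} = \BE\BE^{\top} = \I_d$ and $\BZ^{\top}\BS = \BE\sum_i \BQ^i = \bzero$ since $\sum_i\BQ^i$ annihilates $\Ran(\BE^{\top})$. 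Exploiting the joint $\mathbb{Z}_n$-cyclic symmetry of the pair $(\BZ,\BS)$, the discrete Fourier transform in the index $i$ simultaneously block-diagonalizes the admissible dual matrices $\BL$ and the tangent-space constraints. I would parametrize $\BL$ within the two-parameter symmetric family picked out by the extremal Gaussian direction of part~(a), enforce $\BL\succeq 0$ and $\BL\BZ=0$, and tune the two parameters so that $\lambda_{\max}(\BL)/\lambda_{d+1}(\BL) = 2p/(d+1)$. Verifying that $\BS$ is second-order critical then reduces to a PSD check on $\BL(\BS)$ restricted to $T_{\BS}\St(p,d)^{\otimes n}$, which after Fourier block-diagonalization becomes a finite list of small matrix inequalities saturating exactly the bound derived in part~(a). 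The hard step is ensuring that these per-mode PSD conditions are all tight at the same ratio $2p/(d+1)$, which is precisely the mechanism making the inequality of part~(a) sharp.
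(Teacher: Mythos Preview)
Your approach is essentially the paper's, specialized to $\tau=1/2$ (which the paper shows is optimal here). The orthogonal tangent projection $\BU_i = {\cal P}_{\BS_i,1/2}(\BZ_i\BR)$ is exactly the paper's test direction at $\tau=1/2$, and your quartic term is precisely $\langle\BL,\BSigma_{\BS,1/2}\rangle$. The diagonal computation $\E[\BU_i\BU_i^{\top}] = r(1/2)\I_d$ and the lower bound $\langle\BL,\BS\BS^{\top}\rangle\geq nd\,\lambda_{d+1}(\BL)$ are both correct.

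However, your proposed mechanism for the sharp constant is off. You suggest the factor $(d+1)$ will emerge from ``Gaussian moments combined with $\BZ^{\top}\BS=0$ to cancel residual cross-terms.'' That is not how the constant arises. What is needed is
\[
\langle\BL,\BSigma_{\BS,1/2}\rangle \leq \lambda_{\max}(\BL)\,\Tr\bigl((\BSigma_{\BS,1/2})_{\perp}\bigr)
\leq \lambda_{\max}(\BL)\cdot nd\cdot\frac{(d+1)(2p-d-1)}{4p},
\]
and the second inequality comes from the \emph{rank} estimates $\|\BS^{\top}\BS\|_F^2 \geq (nd)^2/p$ and $\|\Tr_d(\BS\BS^{\top})\|_F^2 \geq (nd)^2/(pd)$, i.e., from $[\Tr\BX]^2 \leq \rank(\BX)\|\BX\|_F^2$ applied to $\BS^{\top}\BS$ and to the Gram matrix of $\{\VEC(\BS_i)\}$. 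No cancellation from $\BZ^{\top}\BS=0$ is involved in this step; that hypothesis is used only to make $\langle\BP_{\perp},\BS\BS^{\top}\rangle=nd$ exact. Once you have these rank inequalities, the ratio $2p/(d+1)$ drops out immediately.

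\textbf{Part (b).} Here there is a genuine gap. A single rotation $\BQ\in\mathrm{SO}(p)$ cannot in general deliver both $\BZ^{\top}\BS=0$ and $\BS^{\top}\BS=(nd/p)\I_p$, and the latter is what makes $\BL = (1+t)\BP_{\perp} - t\,\frac{p}{nd}\BS\BS^{\top}$ have the clean spectrum $\{0,1,1+t\}$. With your ``direct sum of planar rotations,'' if the rotation planes are $(1,2),(3,4),\ldots$ then $\BPi=\diag(\I_d,0)$ commutes with $\BQ$ and $\BS^{\top}\BS=n\BPi$ is rank-$d$; if instead you mix, say planes $(k,d+k)$, you get $\BS^{\top}\BS=\tfrac{n}{2}\diag(\I_{2d},0)$, which is isotropic only when $p=2d$. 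For odd $p$ the fixed axis of $\BQ$ forces a zero block in $\BS^{\top}\BS$. The paper avoids this by taking $n=2^d p$ and $\{\BS_i\}=\{\BD_j[\I_d,0]\BM^k\}$: the cyclic shift $\BM$ gives $\sum_k(\BM^k)^{\top}\BPi\BM^k=d\I_p$ (isotropy), and the sign matrices $\BD_j\in\diag(\{\pm1\}^d)$ give $\sum_j\BD_j=0$ (hence $\BZ^{\top}\BS=0$). The $\pm$ symmetry is then reused in the Hessian check to kill the cross term $[\I_p\otimes\BS_1^{\top},\ldots]\BPi(\BZ\otimes\I_p)$ and to show $n^{-1}\sum_i\BPi_i=\bigl(1-\tfrac{d+1}{2p}\bigr)\I_{pd}$, which is exactly what saturates $2p/(d+1)$. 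Your Fourier block-diagonalization idea is reasonable in spirit, but without a construction that achieves $\BS^{\top}\BS\propto\I_p$ the per-mode inequalities will not all be tight at the same ratio, and for most $(p,d)$ your $\BS$ will not be a second-order critical point of the intended $\BL$.
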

The proof is given in Section~\ref{ss:proof:optimality}.
Theorem~\ref{thm:optimality} essentially gives a sufficient and necessary condition such that any  $\BS\in\St(p,d)$ with $\BZ^{\top}\BS = 0$, is not a second-order critical point. In particular, Theorem~\ref{thm:optimality}(b) implies that if the condition number of $\BL$ is larger than $2p/(d+1)$, some counterexamples can be constructed such that a spurious second-order critical point exists. This counterexample was first constructed in~\cite{EW24} for $d=1.$ 
Our Theorem~\ref{thm:optimality}(b) extends this family of counterexamples, known as ``twisted states", from $d=1$ to $d\geq 2$, as shown below.

\begin{example}[\bf Counterexample]
For the proof of Theorem~\ref{thm:optimality}(b), we construct examples of $\BL$  such that 
\[
\{\BS_i\}_{i=1}^{n}= \left\{  \BD_j \BQ_k
\right\}_{1\leq j\leq 2^d,1\leq k\leq p}
\]
is a second-order critical point where $n = 2^dp$, 
\[
\BD_j \in \left\{
\diag(\{\pm 1\}^d)
\right\},~1\leq j\leq 2^d
\]
is a family of diagonal matrices of size $d\times d$ with diagonal entries $\{\pm 1\}$  and
\[
\BQ_k = \{ [\I_d, 0_{d\times (p-d)}]\BM^k:1\leq k\leq p \},~~~
\BM = 
\begin{bmatrix}
0 & 1 & 0 & \cdots & 0 & 0 \\
0 & 0 & 1 & \cdots & 0 & 0 \\
\vdots & \vdots & \vdots & \ddots & \vdots & \vdots  \\
0 & 0 & 0 & \cdots & 0 & 1 \\
1 & 0 & 0 & \cdots & 0 & 0 
\end{bmatrix}\in\RR^{p\times p}.
\]
This construction of $\{\BS_i\}$ satisfies
\[
\BS^{\top}\BS = \frac{nd}{p}\I_p,~~\BZ^{\top}\BS = 0,
\]
where $\BZ^{\top} = [\I_d,\cdots,\I_d].$
In this case, we can show that if 
\[
\BL = \frac{2p}{d+1} \left(\I_{nd} - n^{-1}\BZ\BZ^{\top}\right) - \left(\frac{2p}{d+1}-1\right)\frac{p}{nd}\BS\BS^{\top}
\]
then $\BS$ is a second-order critical point coexisting with the global minimizer $\BZ$ to~\eqref{def:bm}. For this $\BL$, we can show that 
\[
\lambda_{\max}(\BL) = \frac{2p}{d+1},~~\lambda_{d+1}(\BL)=1.
\]
\end{example}

The state-of-the-art bound $\alpha_G(p,d)$ differs from $2p/(d+1)$ by a factor of roughly 4. 
Given that this type of ``twisted state" achieves the optimal bound for $d=1$, we make a conjecture. 
\begin{conjecture}
The optimization landscape of~\eqref{def:bm} is benign  if
\[
\frac{\lambda_{\max}(\BL)}{\lambda_{d+1}(\BL)} < \frac{2p}{d+1}
\]
where $\BL\BZ= 0 $ and $\BL\succeq 0.$
\end{conjecture}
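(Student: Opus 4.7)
The plan is to extend the dual-certificate framework underlying Theorem~\ref{thm:main} to produce a sharper threshold matching the twisted-state counterexample of Theorem~\ref{thm:optimality}(b). For any candidate second-order critical point $\BS \in \St(p,d)^{\otimes n}$ not orbit-equivalent to $\BZ$, the task is to exhibit a tangent direction $\BV$ on which the Riemannian Hessian is strictly negative, producing a contradiction whenever $\lambda_{\max}(\BL)/\lambda_{d+1}(\BL) < 2p/(d+1)$. Theorem~\ref{thm:main} achieves this with tangent vectors parameterized by a single scalar $\tau \in [0,1]$, yielding the pair $(q(\tau), r(\tau))$; the first step of my proposed strategy is to enlarge this ansatz to a genuinely multi-parameter ensemble that exploits the additional symmetries available when $d \ge 2$ but invisible in the $d=1$ setting of~\cite{EW24,M25}.

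Concretely, I would mimic the structure of the counterexample $\BS_i = \BD_j \BQ_k$, in which the sign-diagonal factor $\BD_j \in \diag(\{\pm 1\}^d)$ and the cyclic-shift factor $\BQ_k$ organize the critical manifold. This suggests constructing dual tangent perturbations that are averaged over a joint Haar-type ensemble on $\Od(d) \times \Od(d)$: one factor rotating the column space of $\BS_i$ and the other acting on $\BZ_i$. The key computation is to evaluate the expectation of the Riemannian Hessian quadratic form at $\BS$ applied to $\BV$ and to show that the ratio of its positive and negative contributions, the analogues of $r(\tau)$ and $q(\tau)$, improves from the current $\Theta(d^{-1})$ scaling to the target $2/(d+1)$. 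Orbit-averaging identities from the representation theory of $\Od(d)$ should reduce this to a polynomial expression in a small number of invariants, chiefly $\|\BZ^{\top}\BS\|_F^2/(nd)$ and the empirical spectrum of $\BS^{\top}\BS$. Plugging the resulting tighter bounds back into the convex dual-feasibility program formulated in Section~\ref{ss:roadmap} would replace the scalar optimization over $\tau$ by one over a higher-dimensional parameter space.

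The principal obstacle, in my judgment, is that the twisted state saturates several inequalities simultaneously, so any successful dual certificate must be tight against an extremal configuration in multiple directions at once. The factor $2/(d+1)$ is strictly better than any bound obtainable from a one-parameter family such as the $\tau$-family, because the latter implicitly decouples the rotational and reflectional degrees of freedom that the twisted state entangles. A related difficulty is handling partially aligned critical points: the case $\BZ^{\top}\BS = 0$ already yields the sharp constant by Theorem~\ref{thm:optimality}(a), so the remaining work lies entirely in the intermediate alignment regime, where the Riemannian Hessian has mixed signs on the natural test directions and a more refined variational argument is required. I anticipate that closing the gap will demand a Schur-type majorization inequality combined with a convex duality argument on a parameter space whose dimension grows with $d$, rather than the one-dimensional optimization over $\tau$ that suffices in the current proof.
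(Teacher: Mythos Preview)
The statement you are addressing is a \emph{conjecture}, not a theorem: the paper does not prove it and explicitly says ``We leave this intriguing question for future work. The main technical bottleneck lies in the construction of the dual variables in Lemma~\ref{lem:certificate}, where the current certificate is not sharp.'' So there is no proof in the paper to compare against; your proposal is a research plan for an open problem, and should be read as such.

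As a research plan, your diagnosis is broadly aligned with the paper's own: the $\tau$-family of tangent vectors in~\eqref{def:LP} is too rigid, and a richer certificate is needed. Your observation that Theorem~\ref{thm:optimality}(a) already gives the sharp constant $2p/(d+1)$ on the slice $\BZ^\top\BS=0$, so that all the remaining difficulty lives in the partially-aligned regime, is correct and useful. However, the proposal is vague precisely where it would need to be concrete. The actual loss in the paper's argument is localized: it is the step in the proof of Lemma~\ref{lem:certificate} where $({\cal L}_{\BS,\tau}(\BY){\cal L}_{\BS,\tau}(\BY)^\top)_\perp$ is bounded above by $(\BSigma_{\BS,\tau})_\perp$ via the crude inequality $\sum_k \BY_k\BY_k^\top \preceq \sum_{k,\ell}{\cal L}_{\BS,\tau}(\bu_k\bv_\ell^\top){\cal L}_{\BS,\tau}(\bu_k\bv_\ell^\top)^\top$, obtained by completing $\{\bv_\ell\}$ to a full basis and discarding the cross terms. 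Your proposal to average over a Haar ensemble on $\Od(d)\times\Od(d)$ does not obviously tighten this step; averaging produces exactly the same covariance $\BSigma_{\BS,\tau}$, and the slack is in how a single rank-one certificate $\BC\BC^\top$ is compared to it, not in the choice of ensemble generating $\BT$. Likewise, the appeal to ``Schur-type majorization on a parameter space whose dimension grows with $d$'' is a placeholder rather than a mechanism: you have not identified what quantities would be majorized, or why the resulting inequality would be saturated by the twisted state.

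In short, your proposal correctly names the regime where the gap lives and correctly intuits that a larger family of certificates is needed, but it does not engage with the specific inequality in Lemma~\ref{lem:certificate} that the paper flags as the bottleneck, and the concrete suggestions (Haar averaging, representation-theoretic identities) do not visibly attack that inequality. A genuine advance would have to either replace the psd comparison~\eqref{eq:key_psd} with something sharper, or change the structure of the dual variable $\BT$ so that a different comparison suffices.
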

We leave this intriguing question for future work. The main technical bottleneck lies in the construction of the dual variables in Lemma~\ref{lem:certificate}, where the current certificate is not sharp. Improving this step seems to require a refined structural characterization of the extreme configurations in the example above.

\subsection{Applications}\label{ss:main3}

Theorem~\ref{thm:main} improves the bound on the condition number for $d\geq 2$ in~\cite{L25}, and matches those for $d=1$ in~\cite{EW24,M25}, and as an immediate result, it is straightforward to argue that the current result also improves all the downstream applications in various applications. For the completeness of the presentation,  we elaborate some applications of the main theorem. The key is to control the condition number of $\BL$ such that
$\lambda_{\max}(\BL)/\lambda_{d+1}(\BL) < \alpha_G(p,d)$
guarantees a benign landscape. 

\paragraph{Application 1: Kuramoto model and $\mathbb{Z}_2$-synchronization} Suppose $d=1$ and $p=2$, then it is the synchronization problem on $\mathbb{S}^1$, see~\cite{LXB19,EW24,ABK22,M25,L25} for more details. The gradient system associated with corresponding objective function yields the well-known Kuramoto model~\cite{K75}.   More precisely, let $\bs_i\in\mathbb{S}^1$ with the parameterization $\bs_i = (\cos\theta_i,\sin\theta_i)$, and then the Kuramoto model is equal to the gradient system of the following energy function under the trignometric parameterization:
\[
\frac{\diff \btheta}{\diff t} = - \nabla_{\btheta}E(\btheta),~~~E(\btheta) = -\sum_{i<j} a_{ij}\cos(\theta_i-\theta_j),
\]
where $E(\btheta)$ is exactly equal to $-\lag \BA, \BS\BS^{\top}\rag$  modulo a constant with $\BS\in (\mathbb{S}^1)^{\otimes n}.$ The core problem is to understand whether the oscillators will reach global synchrony instead of getting stuck at non-synchronized stable equilibria which correspond to the local minima of the nonconvex energy function $E(\btheta).$

Theorem~\ref{thm:main} provides a characterization of the global Kuramoto synchrony that only depends on the condition number of the certificate matrix (Laplacian) $\BL$ that matches the results in~\cite{EW24,M25}. To be more precise, for the adjacency matrix $\BA = (a_{ij})_{i,j}$ where $a_{ij}$ could be positive or negative, then $\BJ_n$ is the global minimizer to the SDR~\eqref{def:sdr} if the graph Laplacian satisfies $\BL = \BD - \BA\succeq 0$ where $\BD = \ddiag(\BA\BJ_n)$ is the degree matrix. 
Then the theorem implies that as long as the graph Laplacian associated with the adjacency matrix $\BA$ is well-conditioned, i.e.,
\[
\frac{\lambda_{\max}(\BL)}{\lambda_2(\BL)}< 2,
\]
then the landscape is benign. This applies to the Kuramoto model on Erd\'os-Reny\'i random networks and signed network, which is closely related to the $\mathbb{Z}_2$-synchronization. We do not repeat the same argument here as it has been well discussed in~\cite{EW24,M25,L25} for $d=1$ and $p=2.$

%{\color{blue}It also recovers the quantum Kuramoto model~\cite{L09,D19}. Add an example.}

\paragraph{Application 2: orthogonal group synchronization}

Consider the orthogonal synchronization under additive Gaussian noise, i.e., see~\cite{MMMO17,L22}, 
\[
\BA_{ij} = \BO_i\BO_j^{\top } + \sigma\BW_{ij}
\]
where $\BW_{ij}$ is a $d\times d$ Gaussian random matrix. It is known from~\cite[Theorem 3.1]{L22} that the SDR~\eqref{def:sdr} is tight with high probability provided that $\sigma\lesssim \sqrt{n}/(\sqrt{d}(\sqrt{d} + \sqrt{\log n}))$. Moreover, based on the discussion in~\cite[Proof of Corollary 6 in Section 3.2]{L25}, we know that
\[
\frac{\lambda_{\max}(\BL)}{\lambda_{d+1}(\BL)} \leq \frac{n + C\sigma\sqrt{nd}(\sqrt{d} + 4\sqrt{\log n})}{n - C\sigma\sqrt{nd}(\sqrt{d} + 4\sqrt{\log n})} 
\]
with some absolute constant $C> 0$, and $\BL = \BDG(\BA\BZ\BZ^{\top})- \BA\succeq 0$ and $\BZ\BZ^{\top}$ is the unique global minimizer to~\eqref{def:sdr}.

Combining the estimation of $\lambda_{\max}(\BL)/\lambda_{d+1}(\BL)$ with Theorem~\ref{thm:main}, we have the following result regarding the global benign landscape. 
%\[
%\alpha_G(p,d) \geq \alpha_M(p,d) =  \frac{\tau_*}{2\tau_*-1}.
%\]
%Here
%\begin{align*}
%\frac{\alpha(p,d)-1}{\alpha(p,d)+1} & \geq \frac{\alpha_M(p,d)-1}{\alpha_M(p,d)+1}  = 
%1 - \frac{ 2(2 - d^{-1})}{p-d+2}, ~~ p\geq d+2.
%\end{align*}
\begin{corollary}
For the orthogonal synchronization under additive Gaussian noise, then the optimization landscape of~\eqref{def:bm} is benign with high probability for any $p\geq d+2$ provided that
\[
\sigma \leq \frac{\alpha_G(p,d)-1}{\alpha_G(p,d)+1} \frac{\sqrt{n}}{C\sqrt{d}(\sqrt{d} + 4\sqrt{\log n})}.
\]
In particular, for $p\geq d+3$, then
\[
\sigma \leq \frac{\alpha_M(p,d)-1}{\alpha_M(p,d)+1}
\cdot  \dfrac{\sqrt{n}}{C\sqrt{d}(\sqrt{d} + 4\sqrt{\log n})} = \frac{1-\tau_*}{3\tau_*-1}
\cdot  \dfrac{\sqrt{n}}{C\sqrt{d}(\sqrt{d} + 4\sqrt{\log n})}
\] 
suffices where $\tau_*\in (1/2,1)$ is given in~\eqref{def:alphaM} and $\alpha_M(p,d) = \tau_*/(2\tau_*-1).$
\end{corollary}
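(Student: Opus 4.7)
The plan is to chain three pieces already assembled in the excerpt: the SDR tightness result from~\cite[Theorem 3.1]{L22}, the two-sided spectral estimate on $\BL$ restated in the paragraph preceding the corollary (originally from~\cite{L25}), and the master Theorem~\ref{thm:main}. Together these convert a bound on $\sigma$ into benignness of the landscape of~\eqref{def:bm}, and the argument is essentially a monotone rearrangement.

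First I would invoke~\cite[Theorem 3.1]{L22} to guarantee that, under $\sigma \lesssim \sqrt{n}/(\sqrt{d}(\sqrt{d}+\sqrt{\log n}))$, the SDR~\eqref{def:sdr} is tight with high probability, so that the dual certificate $\BL = \BDG(\BA\BZ\BZ^{\top}) - \BA$ satisfies $\BL\succeq 0$ and $\BL\BZ = 0$. On the same high-probability event the spectral estimate
\[
\frac{\lambda_{\max}(\BL)}{\lambda_{d+1}(\BL)} \leq \frac{n + C\sigma\sqrt{nd}(\sqrt{d} + 4\sqrt{\log n})}{n - C\sigma\sqrt{nd}(\sqrt{d} + 4\sqrt{\log n})}
\]
restated from~\cite{L25} is valid. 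Setting $y := C\sigma\sqrt{d}(\sqrt{d}+4\sqrt{\log n})/\sqrt{n}$, the right-hand side equals $(1+y)/(1-y)$, so Theorem~\ref{thm:main} reduces the landscape question to enforcing $(1+y)/(1-y) < \alpha_G(p,d)$. Because $y\mapsto (1+y)/(1-y)$ is strictly increasing on $[0,1)$, this is equivalent to $y < (\alpha_G(p,d)-1)/(\alpha_G(p,d)+1)$; unfolding $y$ and solving for $\sigma$ reproduces the first stated bound. Positivity of the right-hand side uses $\alpha_G(p,d) > 1$, which holds for every $p \geq d+2$ by Theorem~\ref{thm:main2} and explains the hypothesis in the corollary; Theorem~\ref{thm:main} then asserts that every second-order critical point of~\eqref{def:bm} coincides with $\BZ$ up to right-$\Od(p)$ gauge.

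For the second half of the corollary I would substitute the explicit formula $\alpha_M(p,d) = \tau_*/(2\tau_*-1)$ with $\tau_* \in (1/2,1)$ from~\eqref{def:alphaM}, obtaining
\[
\frac{\alpha_M(p,d)-1}{\alpha_M(p,d)+1} = \frac{\tau_* - (2\tau_*-1)}{\tau_* + (2\tau_*-1)} = \frac{1-\tau_*}{3\tau_*-1}.
\]
Corollary~\ref{cor:alphaM} gives $\alpha_M(p,d) \le \alpha_G(p,d)$, and since $z\mapsto (z-1)/(z+1)$ is increasing on $(1,\infty)$, the $\alpha_M$-based threshold on $\sigma$ is no larger than the $\alpha_G$-based one and hence also suffices. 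The requirement $\alpha_M(p,d) > 1$, equivalently $\tau_* < 1$, restricts this simplified form to $p \geq d+3$ for $d\geq 2$ as stated in Theorem~\ref{thm:alphaM}.

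I do not anticipate any genuine obstacle: once~\cite[Theorem 3.1]{L22} and the spectral estimate from~\cite{L25} are invoked on a common high-probability event, the rest is the monotone rearrangement above. The only point requiring mild care is to verify that the structural conditions $\BL\succeq 0$, $\BL\BZ = 0$ and the two-sided spectral bound hold simultaneously with high probability; this is already done in~\cite{L22,L25} under the same scaling of $\sigma$, so one simply intersects the two events and applies Theorem~\ref{thm:main} deterministically on the resulting event.
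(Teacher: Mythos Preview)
Your proposal is correct and follows exactly the argument the paper sketches in the paragraph preceding the corollary: combine the SDR tightness from~\cite[Theorem 3.1]{L22}, the two-sided spectral bound on $\BL$ from~\cite{L25}, and Theorem~\ref{thm:main}, then rearrange $(1+y)/(1-y)<\alpha_G(p,d)$ into the bound on $\sigma$. The computation $(\alpha_M-1)/(\alpha_M+1) = (1-\tau_*)/(3\tau_*-1)$ via $\alpha_M = \tau_*/(2\tau_*-1)$ and the appeal to Corollary~\ref{cor:alphaM} for $\alpha_M\le\alpha_G$ are exactly right.
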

This bound improves~\cite[Corollary 6]{L25} by a constant and also extends the range of $p$ from $p\geq d+3$ to any $p\geq d+2$. It also implies a benign landscape of the Burer-Monteiro factorization with $p$ slightly larger than $d$ under nearly identical conditions that guarantee the tightness of the SDR.

\paragraph{Application 3: generalized orthogonal Procrustes problem}
Consider the generalized orthogonal Procrustes problem with the following statistical model:
\[
\BA_i = \BO_i\bar{\BA} + \sigma\BW_i,~~~1\leq i\leq n,
\]
where $\bar{\BA}\in\RR^{d\times m}$ and $\BW_i\in\RR^{d\times m}$ is a Gaussian random matrix, and $d\geq 2$. The generalized orthogonal Procrustes problem aims to recover $\{\BO_i\}$ and $\bar{\BA}$ from $\{\BA_i\}$~\cite{SS11,BKS14,L23b}, i.e., find an orthogonal transform for each point cloud $\BA_i$ with $m$ points in $\RR^d$ such that the transformed datasets are well aligned. The least squares estimation equals 
\[
\max_{\BG_i\in \Od(d)}~\sum_{i<j} \lag \BA_i\BA_j^{\top}, \BG_i \BG_j^{\top}\rag,
\]
which resembles the optimization problem in~\eqref{def:pp}. Similar to the orthogonal group synchronization, one can find a convex relaxation in the form of~\eqref{def:sdr} for~\eqref{def:pp}, and also consider the Burer-Monteiro factorization~\eqref{def:bm} and its global optimization landscape.
The author's earlier work~\cite{L23b} proves that if 
\[
\sigma \lesssim \frac{\sigma_{\min}(\bar{\BA})}{\kappa^4}\cdot \frac{\sqrt{n}}{\sqrt{d}(\sqrt{nd} + \sqrt{m} + 2\sqrt{n\log n})},
\]
then the SDR is tight, i.e.,~\eqref{def:sdr} produces a rank-$d$ solution that exactly equals the least squares estimator.

Regarding the global optimization landscape of the Burer-Monteiro factorization, we note that~\cite[Corollary 7]{L25} implies an upper bound on the condition number of the Hessian at the global minimizer. Combining it with Theorem~\ref{thm:main}, we know that the landscape is benign if
\[
\frac{\lambda_{\max}(\BL)}{\lambda_{d+1}(\BL)} \leq \kappa^2\cdot \frac{\sqrt{n}\|\bar{\BA}\| +C \kappa^4 \sigma \sqrt{d} (\sqrt{nd} + \sqrt{m} + \sqrt{2\gamma n\log n}) }{ \sqrt{n}\|\bar{\BA}\| -  C \kappa^4 \sigma \sqrt{d} (\sqrt{nd} + \sqrt{m} + \sqrt{2\gamma n\log n})} \leq \alpha_G(p,d).
\]

Therefore, we write the discussion above into a corollary which is clearly an improved result from the author's previous work~\cite{L25} because of $\alpha_G(p,d) > \alpha_G(p,d,1)$.
\begin{corollary}
For the generalized orthogonal Procrustes problem under additive Gaussian noise, then the optimization landscape of~\eqref{def:bm} is benign with high probability provided that
\[
\sigma \leq 
\frac{\alpha_G(p,d) - \kappa^2}{\alpha_G(p,d)+\kappa^2}\cdot  \dfrac{\sqrt{n}\|\bar{\BA}\|}{ C \kappa^4 \sqrt{d} (\sqrt{nd} + \sqrt{m} + \sqrt{2\gamma n\log n})}
\]
for  $p \geq d+2$.
\end{corollary}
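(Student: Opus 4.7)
The plan is to compose three ingredients already in the paper: Theorem~\ref{thm:main} as the deterministic landscape criterion, the condition-number estimate for $\BL$ from~\cite[Corollary 7]{L25} quoted verbatim just above the statement, and the SDR tightness from~\cite{L23b}. Theorem~\ref{thm:main} guarantees a benign landscape whenever $\lambda_{\max}(\BL)/\lambda_{d+1}(\BL) < \alpha_G(p,d)$, so it suffices to enforce this inequality on the high-probability event on which the other two results both hold. On that event the quoted bound
\[
\frac{\lambda_{\max}(\BL)}{\lambda_{d+1}(\BL)} \leq \kappa^2\cdot \frac{\sqrt{n}\|\bar{\BA}\| +C \kappa^4 \sigma \sqrt{d} (\sqrt{nd} + \sqrt{m} + \sqrt{2\gamma n\log n}) }{ \sqrt{n}\|\bar{\BA}\| -  C \kappa^4 \sigma \sqrt{d} (\sqrt{nd} + \sqrt{m} + \sqrt{2\gamma n\log n})}
\]
is available, and the denominator is positive in the noise regime of interest.

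The remaining step is an elementary algebraic inversion. Abbreviating $A := \sqrt{n}\|\bar{\BA}\|$ and $y := C\kappa^4 \sigma \sqrt{d} (\sqrt{nd} + \sqrt{m} + \sqrt{2\gamma n \log n})$, I would impose
\[
\kappa^2 \cdot \frac{A+y}{A-y} \;<\; \alpha_G(p,d),
\]
cross-multiply (legal while $y < A$), and collect terms to obtain
\[
y \;<\; A \cdot \frac{\alpha_G(p,d) - \kappa^2}{\alpha_G(p,d) + \kappa^2}.
\]
Substituting back the definitions of $A$ and $y$ reproduces verbatim the bound on $\sigma$ claimed in the corollary. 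The threshold is genuinely positive provided $\alpha_G(p,d) > \kappa^2$, which is the implicit nondegeneracy assumption on the signal-to-noise regime; monotonicity of $\alpha_G(p,d)$ in $p$ (documented in the table of Section~\ref{ss:main1}) means this condition is easier to meet as $p$ grows.

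The main obstacle here is essentially nonexistent: all the analytic work has been done elsewhere. The novelty of the statement is not the argument but the numerical input $\alpha_G(p,d)$, which strictly exceeds the earlier threshold $\alpha_G(p,d,1) = (p+d-2)/(2d)$ from~\cite{L25}. This strict inequality automatically widens the admissible noise range for $\sigma$ and, via the fact that $\alpha_G(p,d) > 1$ already at $p = d+2$ for $d \geq 2$ (by Theorem~\ref{thm:main2}), extends the range of overparameterization from $p \geq d+3$ in~\cite{L25} down to $p \geq d+2$, matching what is asserted. The only bookkeeping to do is to intersect the two high-probability events (SDR tightness and the condition-number upper bound) via a union bound, both of which hold with probability $1 - O(n^{-\gamma})$ for a suitable constant $\gamma$ inherited from the Gaussian concentration estimates used in~\cite{L23b,L25}.
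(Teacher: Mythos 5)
Your proposal is correct and follows exactly the route the paper takes (implicitly, just before the corollary): quote the condition-number bound from \cite[Corollary 7]{L25}, require it to be below $\alpha_G(p,d)$ via Theorem~\ref{thm:main}, and invert the inequality $\kappa^2(A+y)/(A-y)<\alpha_G(p,d)$ to get the stated bound on $\sigma$. The only addition you make — explicitly intersecting the high-probability events and noting the implicit requirement $\alpha_G(p,d)>\kappa^2$ — is consistent with what the paper leaves unstated.
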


\subsection{Roadmap of the proof}\label{ss:roadmap}
In this section, we provide a roadmap of the proof, i.e., only providing the main lemmas/propositions without giving the technical details. The details are deferred to Section~\ref{s:proof}.
The proof idea is inspired by that in~\cite{EW24} for $d=1$ by Rakoto Endor and Waldspurger. 
To show a benign optimization landscape, it suffices to prove its contrapositive statement: suppose $\BZ\BZ^{\top}$ is the unique global minimizer to the SDR~\eqref{def:sdr} and there exists a non-global second-order critical point in~\eqref{def:bm}, then the condition number of the certificate matrix (Laplacian) at the global minimizer $\BZ\BZ^{\top}$ has a nontrivial lower bound. A lower bound was first established in~\cite{L25}. Our goal is to obtain a sharper bound by relaxing the problem to a convex optimization program and then constructing a feasible solution to its dual (a dual certificate), which in turn yields an improved lower bound. Unlike $d=1$ in~\cite{EW24}, the construction of the dual variables which is used to provide a lower bound of the condition number is much more complicated for $d\geq 2$. 
Therefore, our major contribution is to extend the proof from $d=1$ to $d\geq 2$. This extension requires more delicate treatments and constructions because $\Od(d)$ becomes non-commutative for $d\geq 2$. 
Before we proceed to explain the main technical steps, we assume without loss of generality that  $\BZ_i = \I_d$
 and we define
\[
\BZ^{\top} : = [\I_d,\cdots,\I_d]\in\RR^{d\times nd}.
\]
This is because we can replace $\BL$ by $\blkdiag(\BZ_1,\cdots,\BZ_n)\BL \blkdiag(\BZ_1,\cdots,\BZ_n)$. 

\paragraph{Convex relaxation of the condition number minimization.}
The high-level illustration of the optimization problem is
\begin{equation}\label{def:highprog}
\begin{aligned}
\min_{\BL}~~ & \frac{\lambda_{\max}(\BL)}{\lambda_{d+1}(\BL)} \\
\text{s.t.}~~& \BZ\BZ^{\top} \text{ is the global minimizer to~\eqref{def:sdr} with the Hessian }\BL, \\
& \BS \text{ is a second-order critical point of~\eqref{def:bm}}.
\end{aligned}
\end{equation}

Surprisingly, this optimization problem can be formulated into a convex problem, and its lower bound can be provided by evaluating the corresponding dual program. More precisely, the problem~\eqref{def:highprog} can be made into 
\begin{equation}\label{prog:original}
\begin{aligned}
\min_{\BL} \quad &\frac{\lambda_{\max}(\BL)}{\lambda_{d+1}(\BL)} \\
\text{s.t.}\quad & \BL  \succeq 0, \\
& \BL\BZ = 0, \\
& \lambda_{d+1}(\BL) > 0, \\
& (\BL - \BDG(\BL\BS\BS^{\top})) \BS = 0, \\
& \lag \BL - \BDG(\BL\BS\BS^{\top}), \dot{\BS}\dot{\BS}^{\top}\rag \geq 0,
\end{aligned}
\end{equation}
where $\dot{\BS}_i$ is on the tangent space of $\St(p,d)$ at $\BS_i$ and $\dot{\BS}\in\RR^{nd\times p}$ consists of $\{\dot{\BS}_i\}_{i=1}^n.$
The first three constraints focus on all the Laplacian matrices $\BL$ that ensure the global optimality of $\BZ\BZ^{\top}$ to 
\[
\min_{\BX\succeq 0,\BX_{ii}=\I_d}~\lag\BL, \BX \rag
\] 
i.e., an equivalent form of~\eqref{def:sdr} because the diagonal blocks remain constant.
The last two constraints ensure that $\BS\in\St(p,d)^{\otimes n}$ is a second-order critical point to its Burer-Monteiro factorization~\eqref{def:bm}.
In fact, we have the following proposition that shows ~\eqref{def:highprog} and~\eqref{prog:original} are equivalent.
\begin{proposition}\label{prop:equivalence1}
The problem~\eqref{def:highprog} is equivalent to~\eqref{prog:original}.
\end{proposition}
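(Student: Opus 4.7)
The plan is to show that the feasible sets of \eqref{def:highprog} and \eqref{prog:original} coincide pair-by-pair in $(\BL,\BS)$; equality of the optimal values then follows automatically since the objective is identical. Each structural condition in \eqref{def:highprog} will be identified with an algebraic one using the specific form $\BL=\BDG(\BA\BZ\BZ^{\top})-\BA$.

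\textbf{Step 1: SDR global optimality.} Because \eqref{def:sdr} is a convex SDP with the linear equality constraints $\BX_{ii}=\I_d$, the KKT conditions are necessary and sufficient for global optimality, and the dual certificate has the form already recalled in the introduction: $\BZ\BZ^{\top}$ is a global minimizer iff there exist symmetric multipliers $\widehat{\BLambda}_{ii}$ making $\BL=\widehat{\BLambda}-\BA$ satisfy $\BL\succeq 0$ and $\BL\BZ=0$. Uniqueness of the rank-$d$ minimizer is equivalent to $\lambda_{d+1}(\BL)>0$: since $\BL\BZ=0$ contributes exactly $d$ zero eigenvalues, a strictly positive $(d+1)$-st eigenvalue pins down $\Ker(\BL)=\Ran(\BZ)$ and rules out competing minimizers. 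This produces the first three constraints of \eqref{prog:original}.

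\textbf{Step 2: Stiefel critical-point conditions.} The Euclidean gradient of $-\lag\BA,\BS\BS^{\top}\rag$ is $-2\BA\BS$. Introducing symmetric Lagrange multipliers $\BLambda_i$ for each $\BS_i\BS_i^{\top}=\I_d$, a direct Lagrangian computation shows that $\BS$ is Riemannian first-order critical iff $\BLambda_i\BS_i=(\BA\BS)_i$ for some symmetric $\BLambda_i$. Multiplying on the right by $\BS_i^{\top}$ and symmetrizing then forces $\BLambda_i=\BDG(\BA\BS\BS^{\top})_{ii}$, so the first-order condition becomes $(\BA-\BDG(\BA\BS\BS^{\top}))\BS=0$. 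The Riemannian second-order condition at such a critical point is positive semidefiniteness of the Lagrangian Hessian on tangent vectors, namely $\lag\BDG(\BA\BS\BS^{\top})-\BA,\dot{\BS}\dot{\BS}^{\top}\rag\geq 0$ for every $\dot{\BS}$ tangent to $\St(p,d)^{\otimes n}$ at $\BS$.

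\textbf{Step 3: Algebraic bridge.} The equivalence with the last two constraints of \eqref{prog:original} reduces to a single identity,
\[
\BL-\BDG(\BL\BS\BS^{\top}) = \BDG(\BA\BS\BS^{\top})-\BA,
\]
valid whenever $\BS_i\BS_i^{\top}=\I_d$ for every $i$. Plugging in $\BL=\BDG(\BA\BZ\BZ^{\top})-\BA$, this is equivalent to
\[
\BDG(\BDG(\BA\BZ\BZ^{\top})\BS\BS^{\top}) = \BDG(\BA\BZ\BZ^{\top}),
\]
which is immediate: the $i$-th diagonal block on the left equals $\BDG(\BA\BZ\BZ^{\top})_{ii}\BS_i\BS_i^{\top}=\BDG(\BA\BZ\BZ^{\top})_{ii}$ by the Stiefel constraint, and this block is already symmetric so the outer $\BDG$ leaves it unchanged. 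With this identity, the first-order condition from Step 2 becomes $(\BL-\BDG(\BL\BS\BS^{\top}))\BS=0$, and the second-order condition becomes $\lag\BL-\BDG(\BL\BS\BS^{\top}),\dot{\BS}\dot{\BS}^{\top}\rag\geq 0$, exactly the last two constraints of \eqref{prog:original}. Both directions of the equivalence follow.

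\textbf{Expected main obstacle.} The computation is essentially mechanical; the one subtlety is that the Lagrange multiplier attached to $\BS_i$ must be symmetric, and $\BDG$ is precisely the operator that encodes this symmetrization while enforcing the block-diagonal structure. Once $\BDG$ is handled consistently together with $\BS_i\BS_i^{\top}=\I_d$, the bridging identity in Step 3 drops out and the remainder of the equivalence is routine.
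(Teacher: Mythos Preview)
Your proposal is correct and follows essentially the same approach as the paper: both derive the first three constraints from the SDR dual certificate, compute the Riemannian first- and second-order conditions via the Lagrangian with block-diagonal symmetric multipliers, and then bridge to the $\BL$-formulation through the identity $\BDG(\BA\BS\BS^{\top})-\BA=\BL-\BDG(\BL\BS\BS^{\top})$, using $\BDG(\BDG(\BA\BZ\BZ^{\top})\BS\BS^{\top})=\BDG(\BA\BZ\BZ^{\top})$ from $\BS_i\BS_i^{\top}=\I_d$. The organization differs slightly (the paper expands the gradient blockwise before identifying $\BLambda$, whereas you go straight to the multiplier form), but the content is the same.
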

We proceed to solve~\eqref{prog:original} by finding  a convex relaxation. 
Define
\[
\BP_{\perp} := \I_{nd} - \frac{1}{n}\BZ\BZ^{\top}
\]
and $\BL\BZ = 0$ implies that $\BL = \BP_{\perp}\BL\BP_{\perp}.$ Also we denote
\[
\BY_{\perp} := \BP_{\perp}\BY\BP_{\perp},~~\forall \BY\in\RR^{nd\times nd}.
\]
The convex relaxation of~\eqref{prog:original} and its dual form are written into the following proposition.

\begin{proposition}\label{prop:convexrelax}
The optimization problem~\eqref{prog:original} has a convex relaxation of the following form:
\begin{equation}\label{def:primal}
\begin{aligned}
\min_{\BL,t}\quad &~ t \\
m\geq 0 :&~ t \geq 0 \\
\BX\succeq 0:&~ \BL_{\perp} \preceq t\BP_{\perp}, \\
\BQ\succeq 0:&~ \BL_{\perp} \succeq \BP_{\perp}, \\
\BB\in \RR^{nd\times p}:& ~(\BL_{\perp} - \BDG(\BL_{\perp}\BS\BS^{\top})) \BS = 0, \\
\BT\in K^{**}:&~ \BL_{\perp} -\BDG(\BL_{\perp}\BS\BS^{\top}) \in K^*.
\end{aligned}
\end{equation}
Here $K^{**}$ is the dual cone of $K^*$, which is the smallest convex cone containing 
\[
K:=\{\dot{\BS}\dot{\BS}^{\top}: \BS_i\dot{\BS}_i^{\top}+\dot{\BS}_i\BS_i^{\top} = 0\}.
\] In addition, $m$, $\BX$, $\BQ$, $\BB$, and $\BT$ are the corresponding dual variables associated with the constraints.

\begin{equation}\label{prog:dual_reduce}
\begin{aligned}
\max_{\BX,\BQ,\BB,\BT} \quad &  \lag \BP_{\perp}, \BQ\rag \\
{\rm s.t.}~~& \BX\succeq 0,~~\BQ\succeq 0, \\
& \lag\BP_{\perp}, \BX\rag \leq 1, \\
& \BP_{\perp}\left(\BX - \BQ + \frac{1}{2}(\BB\BS^{\top}+\BS\BB^{\top}) - \BT\right)\BP_{\perp} \\
&\qquad + \frac{1}{2}\BP_{\perp}\left( \BDG(\BT - \BB\BS^{\top})\BS\BS^{\top} + \BS\BS^{\top} \BDG(\BT - \BB\BS^{\top})\right)\BP_{\perp} = 0, \\
& \BT \in K^{**}.
\end{aligned}
\end{equation}
The strategy is to find a lower bound of the primal program by constructing a dual variable that achieves high dual value.
\end{proposition}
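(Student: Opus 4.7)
The plan is to proceed in two stages: first, reformulate \eqref{prog:original} as a convex program by exploiting the scale invariance of the ratio $\lambda_{\max}(\BL)/\lambda_{d+1}(\BL)$ and normalizing; second, apply Lagrangian duality to extract \eqref{prog:dual_reduce}.

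For the first stage, because $\BL \succeq 0$ and $\BL\BZ=0$, we have $\BL=\BL_\perp$ and the smallest nonzero eigenvalue $\lambda_{d+1}(\BL)$ is the smallest eigenvalue of $\BL_\perp$ restricted to $\mathrm{range}(\BP_\perp)$. Since the objective is positively homogeneous of degree zero, we may rescale and impose $\lambda_{d+1}(\BL)\geq 1$, which is precisely $\BL_\perp\succeq \BP_\perp$; the objective then reduces to $t=\lambda_{\max}(\BL)$, captured by $\BL_\perp\preceq t\BP_\perp$. The stationarity $(\BL-\BDG(\BL\BS\BS^\top))\BS=0$ is already linear in $\BL$. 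Finally, the second-order inequality $\lag \BL-\BDG(\BL\BS\BS^\top),\dot{\BS}\dot{\BS}^\top\rag\geq 0$ over all admissible $\dot{\BS}$ is, by the very definition of a dual cone, equivalent to $\BL_\perp-\BDG(\BL_\perp\BS\BS^\top)\in K^*$; since $K^*$ is convex, this renders the constraint convex at no loss. Collecting the pieces gives \eqref{def:primal}.

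For the second stage, I would attach multipliers $m\geq 0$, $\BX\succeq 0$, $\BQ\succeq 0$, $\BB\in\RR^{nd\times p}$, $\BT\in K^{**}$ to the respective constraints and form the Lagrangian. Infimization over $t$ enforces $\lag \BX,\BP_\perp\rag+m=1$, which in view of $m\geq 0$ becomes the dual inequality $\lag\BP_\perp,\BX\rag\leq 1$. Infimization over $\BL_\perp$ (ranging over symmetric matrices supported on $\mathrm{range}(\BP_\perp)$) forces the $\BL_\perp$-coefficient of the Lagrangian, after sandwiching by $\BP_\perp$, to vanish. Using (i) the symmetry of $\BL_\perp$ to symmetrize $\lag\BB,\BL_\perp\BS\rag=\lag(\BB\BS^\top+\BS\BB^\top)/2,\BL_\perp\rag$, and (ii) the self-adjointness identity $\lag\BA,\BDG(\BY)\rag=\lag\BDG(\BA),\BY\rag$ to rewrite the $\BDG$-composed terms in the compact form $\tfrac{1}{2}\lag\BDG(\BT-\BB\BS^\top)\BS\BS^\top+\BS\BS^\top\BDG(\BT-\BB\BS^\top),\BL_\perp\rag$, one obtains precisely the equality constraint of \eqref{prog:dual_reduce}. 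The remaining constant $\lag\BQ,\BP_\perp\rag$ in the Lagrangian becomes the dual objective, and weak duality yields the lower bound on the primal value $t^*=\lambda_{\max}(\BL)/\lambda_{d+1}(\BL)$ that will be exploited downstream via explicit dual certificates.

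The main obstacle is the bookkeeping in this second stage: the objects $\BB\BS^\top$ and $\BDG(\BT)\BS\BS^\top$ are generically non-symmetric, so the coefficient of $\BL_\perp$ must be symmetrized carefully by combining the symmetry of $\BL_\perp$ with the self-adjointness of $\BDG$; any sign or placement slip produces a dual constraint that is either infeasible or too weak, and hence propagates into a sharper or weaker eventual bound on the condition number. A minor secondary point is to justify that the normalization $\lambda_{d+1}(\BL)\geq 1$ is without loss---this follows from positive homogeneity of the objective together with the linearity (in $\BL$) of every other constraint in \eqref{prog:original}.
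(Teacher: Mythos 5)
Your proposal is correct and follows essentially the same route as the paper: normalize via the degree-zero homogeneity of the ratio to get the constraints $\BP_{\perp}\preceq \BL_{\perp}\preceq t\BP_{\perp}$, observe that the second-order condition is exactly dual-cone membership $\BL_{\perp}-\BDG(\BL_{\perp}\BS\BS^{\top})\in K^*$, and then form the Lagrangian with multipliers $(m,\BX,\BQ,\BB,\BT)$, using the symmetry of $\BL_{\perp}$ and the self-adjointness of $\BDG$ to symmetrize the $\BB\BS^{\top}$ and $\BDG(\BT-\BB\BS^{\top})\BS\BS^{\top}$ terms, so that stationarity in $\BL_{\perp}$ and $t$ yields precisely the constraints of~\eqref{prog:dual_reduce} with objective $\lag\BP_{\perp},\BQ\rag$ and weak duality giving the lower bound. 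This matches the paper's derivation step for step.
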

We find that~\eqref{def:primal} does not have a feasible solution for every $\BS$, i.e., some $\BS\in\St(p,d)^{\otimes n}$ cannot be a second-order critical point that co-exists with the global minimizer $\BZ$. In that case, the value to~\eqref{prog:original} is $+\infty.$
Note that the earlier work~\cite{L25} proves that
\[
\frac{\lambda_{\max}(\BL)}{\lambda_{d+1}(\BL)} \leq \frac{p+d-2}{2d}, 
\]
ensures the benignness of the optimization landscape. Its equivalent statement is: if there exists a spurious second-order critical point  $\BS\in\St(p,d)^{\otimes}$ that is not global, then the condition number of the Laplacian at the global minimizer, i.e., the global minimizer to~\eqref{prog:original} is at least $(p+d-2)/2d$. It also means that the optimal value of~\eqref{prog:dual_reduce} is at least 
\[
\frac{p+d-2}{2d},~~\forall \BS\in\St(p,d)^{\otimes n}.
\]
Therefore, the main goal is to find a refined uniform lower bound for the global minimizer to~\eqref{prog:original} for all $\BS\in\St(p,d)^{\otimes n}.$

\paragraph{Construction of the dual variables}
The idea of constructing a feasible dual variable in~\eqref{prog:dual_reduce} is to first simplify some constraints. Unlike the earlier works~\cite{EW24,L25,MB24} in which a specific choice of $\dot{\BS}_i$ is chosen, we will consider a family of tangent vectors and try to find the optimal one for any given pair of $(p,d).$ We first introduce this family of tangent vectors, and present a few important quantities that will be used in the construction of feasible dual variables. 

\begin{definition}
We define two linear operators $\RR^{d\times p}$ to $\RR^{d\times p}$:
 \begin{equation}\label{def:LP}
\begin{aligned}
{\cal L}_{\BS_i,\tau}(\BY) & : =  \BY\BS_i^{\top}\BS_i + \tau(  \BS_i\BY^{\top} -\BY \BS_i^{\top})\BS_i
  = (1-\tau)\BY\BS_i^{\top}\BS_i + \tau\BS_i\BY^{\top}\BS_i, \\
%\frac{1}{2}(\BY\BS_i^{\top} + \BS_i\BY^{\top})\BS_i.
{\cal P}_{\BS_i,\tau}(\BY) & := \BY - {\cal L}_{\BS_i,\tau}(\BY) = \BY(\I_p - \BS_i^{\top}\BS_i) - \tau(  \BS_i\BY^{\top} -\BY \BS_i^{\top})\BS_i,
\end{aligned}
\end{equation}
where $\tau$ is a parameter and $\BS_i\in\St(p,d).$
\end{definition}
Apparently, ${\cal P}_{\BS_i,\tau}(\BY)$ is on the tangent space that satisfies
\[
{\cal P}_{\BS_i,\tau}(\BY)\BS_i^{\top} + \BS_i {\cal P}_{\BS_i,\tau}(\BY)^{\top} = 0,~~\forall \BY\in\RR^{d\times p}.
\]
In particular, for $\tau = 1/2$, ${\cal P}_{\BS_i, 1/2}(\cdot)$ is the projection onto the tangent space of $\St(p,d)$ at $\BS_i.$ In the author's earlier work, only $\tau = 1$  is considered, i.e., ${\cal P}_{\BS_i,1}(\BY) = \BY - \BS_i\BY^{\top}\BS_i$. Therefore, this family of tangent vectors is much more general than those in the existing works. 
In addition, we can see that both ${\cal L}_{\BS_i,\tau}(\BY)$ and ${\cal P}_{\BS_i,\tau}(\BY)$ contain anti-symmetric terms $\BS_i\BY^{\top}-\BY\BS_i^{\top}$ for $d\geq 2$. For $d=1$, the anti-symmetric term is gone, i.e., ${\cal L}_{\BS_i,\tau}$ and ${\cal P}_{\BS_i,\tau}$ remain constant for any $\tau.$

In our analysis, we need to apply ${\cal L}_{\BS_i,\tau}$ and ${\cal P}_{\BS_i,\tau}$ for $1\leq i\leq n$  jointly. Thus we also define
\begin{equation}\label{def:LPn}
\begin{aligned}
{\cal L}_{\BS,\tau}(\BY) & := 
\begin{bmatrix}
{\cal L}_{\BS_1,\tau}(\BY) \\
\vdots \\
{\cal L}_{\BS_n,\tau}(\BY)
\end{bmatrix}\in\RR^{nd\times p},~~~~
{\cal P}_{\BS,\tau}(\BY) & := \BZ\BY - {\cal L}_{\BS,\tau}(\BY)
\end{aligned}.
\end{equation}
The properties of ${\cal L}_{\BS_i,\tau}(\cdot)$,  ${\cal L}_{\BS,\tau}(\cdot)$,  ${\cal P}_{\BS_i,\tau}(\cdot)$, and ${\cal P}_{\BS,\tau}(\cdot)$ are discussed in Section~\ref{ss:proof_2}. 
In particular, we define
\[
\BSigma_{\BS,\tau} := \E {\cal L}_{\BS,\tau}(\BPhi){\cal L}_{\BS,\tau}(\BPhi)^{\top},~~~\E {\cal P}_{\BS,\tau}(\BPhi){\cal P}_{\BS,\tau}(\BPhi)^{\top} = (2(d-1)\tau+p-2d)\BZ\BZ^{\top} + \BSigma_{\BS,\tau}
\]
as the ``covariance" of ${\cal L}_{\BS,\tau}(\BPhi)$ and ${\cal P}_{\BS,\tau}(\BPhi)$ where $\BPhi\in\RR^{d\times p}$ is a Gaussian random matrix with i.i.d. entries. In particular, it holds
\begin{equation}\label{eq:lptrace}
(\BSigma_{\BS,\tau})_{ii} = (q(\tau) + d\tau^2)\I_d,~~~\E ({\cal P}_{\BS,\tau}(\BPhi){\cal P}_{\BS,\tau}(\BPhi)^{\top})_{ii} = r(\tau)\I_d.
\end{equation}

We are now ready to give our construction of the dual variables as follows.
\begin{lemma}\label{lem:certificate}
Let $\BQ$, $\BT$ and $\BB$ in~\eqref{prog:dual_reduce} be
\begin{align*}
\BQ & =\frac{\alpha\BS\BS^{\top}}{\lag \BP_{\perp}, \BS\BS^{\top}\rag}, \\
\BT & = \beta \E {\cal P}_{\BS,\tau}(\BPhi){\cal P}_{\BS,\tau}(\BPhi)^{\top}=   \beta \left( (2(d - 1)\tau+ p - 2d
)\BZ\BZ^{\top} + \BSigma_{\BS,\tau}\right), \\ %\left( (p-2d)\BZ\BZ^{\top} + \sum_{i,j}\lag \BS_i, \BS_j\rag\be_i\be_j^{\top}\otimes \BS_i\BS_j^{\top}\right), \\
 \BB & = \delta \BC ,~~~\BC = \PP_{\BS,\tau}(\BY),~~\|\BY\|_F= 1, %\BY - {\cal L}_{\BS_i,\tau}(\BY),
\end{align*}
for some $\beta>0,\delta\in\RR$, and $\BY\in\RR^{d\times p}$. Then it holds that
\[
\BT\in K^{**},~~\BT_{ii}  = \beta r(\tau)\I_d,~1\leq i\leq n,
\]
and
\begin{align*}
\BX_{\perp} & = \left(  \frac{\alpha}{\lag \BP_{\perp}, \BS\BS^{\top}\rag} - \beta r(\tau) \right) (\BS\BS^{\top})_{\perp}  - \frac{\delta}{2}(\BC\BS^{\top} + \BS\BC^{\top})_{\perp}  + \beta \left( \BSigma_{\BS,\tau}\right)_{\perp} \succeq 0
\end{align*}
holds if
\[
\begin{bmatrix}
\dfrac{\alpha}{\lag \BP_{\perp}, \BS\BS^{\top}\rag} - \beta r(\tau)  & -\dfrac{\delta}{2} \\
-\dfrac{\delta}{2} & \beta
\end{bmatrix}\succeq 0
\]
where $\BC_i\in\RR^{d\times p}$ is the $i$-th block of $\BC\in\RR^{nd\times p}$ and it holds that 
\begin{equation}\label{eq:key_psd}
\left(\BSigma_{\BS,\tau} \right)_{\perp} \succeq (\BC\BC^{\top})_{\perp} = ({\cal L}_{\BS,\tau}(\BY){\cal L}_{\BS,\tau}(\BY)^{\top})_{\perp}.
\end{equation}

\end{lemma}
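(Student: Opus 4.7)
The plan is to substitute the proposed dual candidates into the program~\eqref{prog:dual_reduce} and verify each constraint in turn, reserving the PSD inequality~\eqref{eq:key_psd} as the central analytic step. To show $\BT \in K^{**}$, I would note that $\BS_i\BS_i^\top = \I_d$ yields, by direct calculation from the definition in~\eqref{def:LP}, $\BS_i {\cal P}_{\BS_i,\tau}(\BY)^\top + {\cal P}_{\BS_i,\tau}(\BY)\BS_i^\top = 0$ for every $\BY$, so each realization ${\cal P}_{\BS,\tau}(\BPhi){\cal P}_{\BS,\tau}(\BPhi)^\top$ lies in $K$; hence $\BT$, a nonnegative scalar multiple of the expectation, lies in the closed conic hull $K^{**}$ of $K$. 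The diagonal identity $\BT_{ii} = \beta r(\tau)\I_d$ is then immediate from~\eqref{eq:lptrace}, and the same tangent property applied to $\BB_i = \delta\,{\cal P}_{\BS_i,\tau}(\BY)$ gives $\BS_i\BB_i^\top + \BB_i\BS_i^\top = 0$, so that $\BDG(\BT-\BB\BS^\top) = \beta r(\tau)\I_{nd}$ is a scalar multiple of the identity.

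Using this simplification in the equality constraint of~\eqref{prog:dual_reduce} and solving for $\BX_\perp$ gives
\begin{align*}
\BX_\perp = \BP_\perp\Bigl(\BQ + \BT - \tfrac{\delta}{2}(\BC\BS^\top + \BS\BC^\top) - \beta r(\tau)\BS\BS^\top\Bigr)\BP_\perp.
\end{align*}
Substituting $\BQ = \alpha\BS\BS^\top/\lag\BP_\perp,\BS\BS^\top\rag$ and $\BT = \beta\bigl((2(d-1)\tau+p-2d)\BZ\BZ^\top + \BSigma_{\BS,\tau}\bigr)$, and using $\BP_\perp\BZ = 0$ to annihilate the $\BZ\BZ^\top$ term, reduces $\BX_\perp$ to the stated formula. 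To establish $\BX_\perp \succeq 0$, I would apply~\eqref{eq:key_psd} to lower-bound $\beta(\BSigma_{\BS,\tau})_\perp$ by $\beta(\BC\BC^\top)_\perp$; the resulting lower bound factors as
\begin{align*}
\BP_\perp [\BS,\BC]\begin{bmatrix} \gamma & -\delta/2 \\ -\delta/2 & \beta\end{bmatrix}[\BS,\BC]^\top \BP_\perp,
\end{align*}
with $\gamma = \alpha/\lag\BP_\perp,\BS\BS^\top\rag - \beta r(\tau)$, and is PSD by the $2\times 2$ hypothesis.

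The heart of the proof is~\eqref{eq:key_psd}. The equality $(\BC\BC^\top)_\perp = ({\cal L}_{\BS,\tau}(\BY){\cal L}_{\BS,\tau}(\BY)^\top)_\perp$ follows from $\BC = \BZ\BY - {\cal L}_{\BS,\tau}(\BY)$ together with $\BP_\perp\BZ = 0$. For the PSD inequality $\BSigma_{\BS,\tau} \succeq {\cal L}_{\BS,\tau}(\BY){\cal L}_{\BS,\tau}(\BY)^\top$ when $\|\BY\|_F = 1$, I would expand in the standard basis $\{E_{kl}\}$ of $\RR^{d\times p}$: setting $M_{kl} := {\cal L}_{\BS,\tau}(E_{kl})\in\RR^{nd\times p}$, linearity and isotropy of $\BPhi$ give $\BSigma_{\BS,\tau} = \sum_{k,l} M_{kl}M_{kl}^\top$ while ${\cal L}_{\BS,\tau}(\BY) = \sum_{k,l} Y_{kl}M_{kl}$. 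For any $u\in\RR^{nd}$, Cauchy-Schwarz with $\sum_{k,l}Y_{kl}^2 = 1$ yields $\bigl\|\sum_{k,l} Y_{kl} M_{kl}^\top u\bigr\|^2 \le \sum_{k,l}\|M_{kl}^\top u\|^2$, which is precisely the scalar form of the inequality; conjugating by $\BP_\perp$ delivers~\eqref{eq:key_psd}. The main bookkeeping obstacle is tracking the $\BDG$ simplification and the various $\BP_\perp\BZ=0$ cancellations, but the only genuine analytic input is this Cauchy-Schwarz step applied to the linear operator ${\cal L}_{\BS,\tau}$; everything else is a structured substitution into the dual constraints.
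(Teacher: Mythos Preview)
Your proposal is correct and follows the same overall architecture as the paper: verify $\BT\in K^{**}$ via the tangent property of ${\cal P}_{\BS_i,\tau}$, use $\BDG(\BT-\BB\BS^{\top})=\beta r(\tau)\I_{nd}$ to collapse the equality constraint in~\eqref{prog:dual_reduce} to the stated formula for $\BX_{\perp}$, and then reduce $\BX_{\perp}\succeq 0$ to the $2\times 2$ condition via~\eqref{eq:key_psd}. The only notable divergence is in how you prove~\eqref{eq:key_psd}. The paper takes the SVD $\BY=\sum_k\sigma_k\bu_k\bv_k^{\top}$, bounds the cross terms $\sigma_k\sigma_\ell\BY_k\BY_\ell^{\top}$ by an AM--GM step, and then \emph{extends} $\{\bv_\ell\}_{\ell=1}^d$ to a full orthonormal basis of $\RR^p$ before summing to recover $\BSigma_{\BS,\tau}$. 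Your route---expand directly in the standard basis $\{E_{kl}\}$, so that $\BSigma_{\BS,\tau}=\sum_{k,l}M_{kl}M_{kl}^{\top}$ and ${\cal L}_{\BS,\tau}(\BY)=\sum_{k,l}Y_{kl}M_{kl}$, then apply Cauchy--Schwarz to $\|\sum Y_{kl}M_{kl}^{\top}u\|^2$ with $\sum Y_{kl}^2=1$---is more economical: it avoids both the SVD and the basis-extension step, and makes transparent that the inequality holds for any orthonormal basis of $\RR^{d\times p}$ by Gaussian isotropy. One cosmetic point: in your factorization $\BP_{\perp}[\BS,\BC]\bigl[\begin{smallmatrix}\gamma&-\delta/2\\-\delta/2&\beta\end{smallmatrix}\bigr][\BS,\BC]^{\top}\BP_{\perp}$, the inner matrix should be read as the Kronecker product with $\I_p$ (as the paper writes it), since $[\BS,\BC]\in\RR^{nd\times 2p}$.
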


By plugging the choice of $\BC_i = \PP_{\BS_i,\tau}(\BY) $ in Lemma~\ref{lem:certificate}, we future simplify the dual program. It is easy to see that the optimal value to the dual program~\eqref{prog:dual_reduce} has a lower bound:
\begin{equation}\label{prog:dual2}
\begin{aligned}
\max_{\alpha,\beta,\delta,\BY}\quad & \alpha \\
\text{s.t.}\quad& \BX_{\perp} := \left(  \frac{\alpha}{\lag \BP_{\perp}, \BS\BS^{\top}\rag} - \beta r(\tau)\right) (\BS\BS^{\top})_{\perp}   - \frac{\delta}{2}(\BC\BS^{\top} + \BS\BC^{\top})_{\perp}  + \beta \left( \BSigma_{\BS,\tau}\right)_{\perp}\succeq 0, \\
& \BC =  {\cal P}_{\BS,\tau}(\BY),~~\|\BY\|_F = 1, \\
& \begin{bmatrix}
\dfrac{\alpha}{\lag \BP_{\perp}, \BS\BS^{\top}\rag} -  \beta r(\tau)  & -\dfrac{\delta}{2} \\
-\dfrac{\delta}{2} & \beta
\end{bmatrix}\succeq 0, \\
& \Tr(\BX_{\perp})\leq 1.
\end{aligned}
\end{equation}

%Our goal is to maximize $\alpha$ such that the constraints in~\eqref{prog:dual2} holds. 
To solve~\eqref{prog:dual2}, we choose
\[
\BY = -\frac{\BZ^{\top}\PP_{\BS,\tau}(\BZ^{\top}\BS)}{\|\BZ^{\top}\PP_{\BS,\tau}(\BZ^{\top}\BS)\|_F}.
\]
Then the SDP~\eqref{prog:dual2} only involves $\alpha,\beta$, and $\delta$.
We want to find a universal $\alpha$ such that for the given $\alpha$, there exists $\delta$ and $\beta$ that depend on $\BS$ such that the constraints in~\eqref{prog:dual2} are feasible for any $\BS\in\St(p,d)^{\otimes n}$.

\begin{lemma}%[Determining the dual variables $\BY$, $\beta$, and $\gamma$ in~\eqref{prog:dual2}]
\label{lem:alpha_find}
The optimal value of $\alpha$ to~\eqref{prog:dual2} is lower bounded by 
\begin{equation}\label{eq:prog_mx0}
\max~\alpha \quad {\rm s.t.}\quad
\alpha^{-1} \geq \frac{ g(\BS,1-\alpha^{-1},\tau)}{ r(\tau)}
\end{equation}
where %$x:= 1 - \alpha^{-1}$ and 
\begin{equation}\label{def:gsx}
\begin{aligned}
 & g(\BS,x,\tau) := \frac{ \Tr(\BSigma_{\BS,\tau})_{\perp} }{\lag \BP_{\perp}, \BS\BS^{\top}\rag} - \frac{n^{-2} \left\| \BZ^{\top}\PP_{\BS,\tau}(\BZ^{\top}\BS) \right\|_F ^2}{ \lag \BP_{\perp}, \BS\BS^{\top}\rag^2 x }% \\
%&~~ = \frac{ q(\tau) (nd - n^{-1}\|\BS\BS^{\top}\|_F^2)  + \tau^2 (nd^2 - n^{-1}\|\Tr_d(\BS\BS^{\top})\|_F^2) }{\lag \BP_{\perp}, \BS\BS^{\top}\rag} - \frac{ \left\|\BZ^{\top}\BS - n^{-1}\BZ^{\top}{\cal L}_{\BS,\tau}( \BZ^{\top}\BS) \right\|_F ^2}{ \lag \BP_{\perp}, \BS\BS^{\top}\rag^2 x } 
\end{aligned}
\end{equation}
for any fixed $\BS.$
That's to say: if $\lambda_{\max}(\BL)/\lambda_{d+1}(\BL) $ is smaller than the optimal value to~\eqref{eq:prog_mx0}, then any fixed $\BS$ is not a second-order critical point. 
To ensure any $\BS\in\St(p,d)^{\otimes n}$ with $\BS\BS^{\top}\neq\BZ\BZ^{\top}$ is not a second-order critical point, it suffices to solve
\begin{equation}\label{eq:prog_mx00}
\max~\alpha \quad {\rm s.t.}\quad
\alpha^{-1} \geq \frac{ g(\BS,1-\alpha^{-1},\tau)}{ r(\tau)},~~\forall \BS\in\St(p,d)^{\otimes n},~\BS\BS^{\top}\neq\BZ\BZ^{\top}.
\end{equation}

Suppose $M(x,\tau)$ is an upper bound of $\max_{\BS\in\St(p,d)^{\otimes n}} g(\BS,x,\tau)$, i.e, 
\[
\max_{\BS\in\St(p,d)^{\otimes n}} g(\BS,x,\tau) \leq M(x,\tau),
\]
and then it suffices to find
\begin{equation}\label{eq:prog_mx}
\max~\alpha  \quad {\rm s.t.}\quad r(\tau)\alpha^{-1} \geq M(1-\alpha^{-1},\tau) %,~~x = 1-\alpha^{-1},
\end{equation}
which gives a lower bound to the optimal value of the dual program.
\end{lemma}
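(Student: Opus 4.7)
The plan is to reduce the semidefinite program~\eqref{prog:dual2} to a scalar inequality in $\alpha$ by substituting the prescribed $\BY$, eliminating $\delta$ via the $2\times 2$ positive-semidefiniteness constraint, and then imposing feasibility in $\beta$. The resulting discriminant condition will collapse to exactly $r(\tau)\alpha^{-1} \geq g(\BS, 1-\alpha^{-1}, \tau)$. The passage to the uniform bound~\eqref{eq:prog_mx} is then automatic: any $\alpha$ arising from $r(\tau)\alpha^{-1} \geq M(1-\alpha^{-1},\tau)$ satisfies the $\BS$-dependent inequality for every $\BS$ simultaneously, hence provides a valid dual-feasible lower bound via the primal-dual inequality for~\eqref{def:primal}.

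The key computation is the trace of $\BX_{\perp}$. Using $\Tr((\BS\BS^{\top})_{\perp}) = \lag\BP_{\perp},\BS\BS^{\top}\rag$, the tangent-space antisymmetry $\BS_i\PP_{\BS_i,\tau}(\BY)^{\top}+\PP_{\BS_i,\tau}(\BY)\BS_i^{\top}=0$ (so that $\Tr(\BC_i\BS_i^{\top})=0$ block-by-block), and the resulting identity $\lag\BP_{\perp},\BC\BS^{\top}\rag = -\tfrac{1}{n}\lag\BZ^{\top}\BS,\BZ^{\top}\BC\rag$, the trace constraint $\Tr(\BX_{\perp})\leq 1$ becomes
\begin{equation*}
\alpha - \beta\, r(\tau)\, c + \beta T - \frac{\delta}{n}\lag\BZ^{\top}\BS,\BZ^{\top}\BC\rag \leq 1,
\end{equation*}
where $c := \lag\BP_{\perp},\BS\BS^{\top}\rag$ and $T := \Tr((\BSigma_{\BS,\tau})_{\perp})$. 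With the specific $\BY = -\BZ^{\top}\PP_{\BS,\tau}(\BZ^{\top}\BS)/\|\BZ^{\top}\PP_{\BS,\tau}(\BZ^{\top}\BS)\|_F$ and the self-adjointness of the linear map $\BY\mapsto \BZ^{\top}\PP_{\BS,\tau}(\BY)$ on $\RR^{d\times p}$, this simplifies to $\lag\BZ^{\top}\BS,\BZ^{\top}\BC\rag=-N$ with $N:=\|\BZ^{\top}\PP_{\BS,\tau}(\BZ^{\top}\BS)\|_F$, exposing the norm that appears in~\eqref{def:gsx}.

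Next, the $2\times 2$ PSD constraint gives $\delta^{2}\leq 4\beta\bigl(\alpha/c - \beta r(\tau)\bigr)$, and saturating this bound while matching the sign of $\delta$ to that of $\lag\BZ^{\top}\BS,\BZ^{\top}\BC\rag$ converts the trace constraint into
\begin{equation*}
\alpha - 1 - \beta\bigl(r(\tau)c - T\bigr) \leq \frac{2N}{n}\sqrt{\beta\bigl(\alpha/c - \beta r(\tau)\bigr)}.
\end{equation*}
When the left-hand side is nonpositive the inequality is trivial; otherwise squaring yields a quadratic in $\beta$ whose nonnegative-discriminant condition, after dividing by $\alpha^{2}$ and substituting $x=1-\alpha^{-1}$, simplifies to $r(\tau)\alpha^{-1} \geq T/c - N^{2}/(n^{2}c^{2}x) = g(\BS,x,\tau)$, which is precisely~\eqref{eq:prog_mx0}. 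The main technical obstacle is establishing the self-adjointness of $\BY\mapsto \BZ^{\top}\PP_{\BS,\tau}(\BY)$ and handling the antisymmetric piece $\BS_i\BY^{\top}-\BY\BS_i^{\top}$ appearing in~\eqref{def:LP}: both are trivial for $d=1$ since the antisymmetric term vanishes, but for $d\geq 2$ they require careful operator-level identities that exploit the noncommutative structure of $\Od(d)$ — the content of Section~\ref{ss:proof_2}. Once those identities are in hand, the reduction above is a routine discriminant computation, and the final implication from~\eqref{eq:prog_mx00} to~\eqref{eq:prog_mx} follows from the trivial observation that dominating $g$ by $M$ only tightens the constraint on~$\alpha$.
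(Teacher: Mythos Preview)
Your proposal is correct and follows essentially the same route as the paper. The only mechanical difference is in how you eliminate $(\beta,\delta)$: you saturate the $2\times 2$ PSD constraint, square, and invoke a nonnegative-discriminant condition, whereas the paper parametrizes the boundary of that constraint by a circle variable $\theta$ via $\beta=\alpha(1-\cos\theta)/(2c\,r(\tau))$, $\delta=\alpha\sin\theta/(c\sqrt{r(\tau)})$, and then optimizes the linear trace constraint over $\theta$; both routes land on the same quadratic inequality in $x=1-\alpha^{-1}$, which after dividing by $x$ gives $r(\tau)\alpha^{-1}\geq g(\BS,1-\alpha^{-1},\tau)$. One small caution: your discriminant argument implicitly assumes the resulting $\beta$ lies in $[0,\alpha/(c\,r(\tau))]$, which the trig parametrization enforces automatically; you should note (as the paper's parametrization makes transparent) that the squared quadratic is nonnegative at both endpoints $\beta=0$ and $\beta=\alpha/(c\,r(\tau))$, so any real root is automatically in range.
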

For Theorem~\ref{thm:alphaM}, it suffices to find a simple upper bound $M(x,\tau)$ and apply Lemma~\ref{lem:alpha_find}. We can see in~\eqref{def:gsx} that
\[
g(\BS,x,\tau) \leq \frac{\Tr(\BSigma_{\BS,\tau})_{\perp}}{\lag \BP_{\perp}, \BS\BS^{\top}\rag}.
\]
Then using this upper bound of $g(\BS,x,\tau)$ gives Theorem~\ref{thm:alphaM} whose proof is included in Section~\ref{ss:proof_3}.

The final technical step of Theorem~\ref{thm:main} is to find a sharper but still simple upper bound of~\eqref{def:gsx}, which is given in Lemma~\ref{lem:thmmain}:
\begin{equation}\label{eq:gupper}
\sup_{\BS\in\St(p,d)^{\otimes n}} g(\BS,x,\tau)\leq \sup_{0\leq u\leq w\leq v\leq 1}q(\tau) \cdot \frac{ u}{v}\left( 2 - \frac{pu}{p-d}\right) + d\tau^2\cdot\frac{w}{v} \left(2 - \frac{pdw}{pd-1}\right)  - \frac{(1-dv)_+u^2}{x \cdot dv^2}.
\end{equation}
The technical details of Lemma~\ref{lem:alpha_find} and~\ref{lem:thmmain} can be found in Section~\ref{ss:proof_4}-\ref{ss:proof_5}. The exact evaluation of~\eqref{eq:gupper} is given in Proposition~\ref{prop:gfun}.
Finally in Section~\ref{ss:proof_5}, we finish the proof of Theorem~\ref{thm:main} and~\ref{thm:main2} by combining~\eqref{eq:prog_mx} in Lemma~\ref{lem:alpha_find} with~\eqref{eq:gupper} in Lemma~\ref{lem:thmmain}.

\paragraph{A primal argument of the main theorem}
For the completeness of the presentation, 
we complement the contrapositive argument by showing directly that solving the dual program above will lead to a guarantee on the benignness of landscape.
\begin{theorem}[\bf A direct argument for a benign landscape]\label{thm:positive}
Suppose there exists a scalar $\alpha$ that is independent of $\BS$, and $(\beta,\delta,\BY)$ that depends on $\BS$ such that~\eqref{prog:dual2} holds for any $\BS$, i.e.,
\[
\BX_{\perp}\succeq 0,~~\Tr(\BX_{\perp}) \leq 1,
\]
and 
\[
\frac{\lambda_{\max}(\BL)}{\lambda_{d+1}(\BL)} < \alpha.
\]
Then any second-order critical point $\BS$ is a global minimizer satisfying $\BS\BS^{\top} = \BZ\BZ^{\top}.$
\end{theorem}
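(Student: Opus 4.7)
My plan is a direct weak-duality argument run by contradiction: assuming a spurious second-order critical point $\BS\in\St(p,d)^{\otimes n}$ with $\BS\BS^{\top}\neq\BZ\BZ^{\top}$, I will use the dual variables $(\BX_{\perp},\BQ,\BB,\BT)$ supplied by Lemma~\ref{lem:certificate} together with the Hessian $\BL$ at the global minimizer to force the lower bound $\lambda_{\max}(\BL)/\lambda_{d+1}(\BL)\ge\alpha$, contradicting the hypothesis of the theorem.

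\textbf{Key steps.} First, I record the structural consequences of $\BS$ being a second-order critical point coexisting with the unique SDR minimizer $\BZ\BZ^{\top}$: $\BL\BZ=0$, $\BL\succeq 0$, $\lambda_{d+1}(\BL)>0$, the first-order identity $\BL_{\perp}\BS=\BDG(\BL_{\perp}\BS\BS^{\top})\BS$, and the second-order conic membership $\boldsymbol{N}:=\BL_{\perp}-\BDG(\BL_{\perp}\BS\BS^{\top})\in K^{*}$. Second, I invoke the hypothesis to select $(\beta,\delta,\BY)$ such that the dual variables from Lemma~\ref{lem:certificate} are feasible in~\eqref{prog:dual2}; in particular, they satisfy the equality constraint of~\eqref{prog:dual_reduce}. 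Third, I contract that equality against $\BL_{\perp}$. Because $\BL\BZ=0$ gives $\BL_{\perp}=\BP_{\perp}\BL_{\perp}\BP_{\perp}$, the outer projectors can be absorbed into $\BL_{\perp}$ and the left-hand side vanishes. Using the first-order identity together with the block-diagonal adjointness $\langle\BDG(X),Y\rangle=\langle\BDG(X),\BDG(Y)\rangle$ valid whenever $\BDG(X)$ is symmetric block-diagonal, the cross terms in $\BB$ and in $\BDG(\BT-\BB\BS^{\top})\BS\BS^{\top}$ telescope, leaving the clean residual
\[
\langle\BL_{\perp},\BQ\rangle \;=\; \langle\BL_{\perp},\BX_{\perp}\rangle \;-\; \langle\boldsymbol{N},\BT\rangle.
\]

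\textbf{Closing the argument.} Three standard inequalities then finish the proof. The cone relations $\boldsymbol{N}\in K^{*}$, $\BT\in K^{**}$ yield $\langle\boldsymbol{N},\BT\rangle\ge 0$. The operator bound $\BL_{\perp}\preceq\lambda_{\max}(\BL)\BP_{\perp}$ combined with $\BX_{\perp}\succeq 0$ and $\Tr(\BX_{\perp})\le 1$ gives $\langle\BL_{\perp},\BX_{\perp}\rangle\le\lambda_{\max}(\BL)$. Conversely, $\BL_{\perp}\succeq\lambda_{d+1}(\BL)\BP_{\perp}$ together with $\BQ=\alpha\BS\BS^{\top}/\langle\BP_{\perp},\BS\BS^{\top}\rangle\succeq 0$ and the built-in normalization $\langle\BP_{\perp},\BQ\rangle=\alpha$ gives $\langle\BL_{\perp},\BQ\rangle\ge\alpha\lambda_{d+1}(\BL)$. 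Chaining produces $\alpha\lambda_{d+1}(\BL)\le\lambda_{\max}(\BL)$, i.e.\ $\lambda_{\max}(\BL)/\lambda_{d+1}(\BL)\ge\alpha$, the desired contradiction.

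\textbf{Main obstacle.} The one non-routine step is the telescoping inside the contraction. The equality constraint of~\eqref{prog:dual_reduce} contains the somewhat intricate correction $\BDG(\BT-\BB\BS^{\top})\BS\BS^{\top}+\BS\BS^{\top}\BDG(\BT-\BB\BS^{\top})$, and one must verify that, after pairing with $\BL_{\perp}$ and applying the first-order identity $\BL_{\perp}\BS=\BDG(\BL_{\perp}\BS\BS^{\top})\BS$, this correction combines with $\tfrac{1}{2}(\BB\BS^{\top}+\BS\BB^{\top})$ so that all $\BB$ pieces cancel exactly and what remains assembles precisely into $-\langle\boldsymbol{N},\BT\rangle$. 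Once this trace bookkeeping is carried out, the remainder is a standard weak-duality assembly.
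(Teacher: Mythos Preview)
Your proposal is correct and amounts to the same weak-duality argument the paper gives: both establish the key inequality $\langle\BL,\BX_{\perp}\rangle\ge\langle\BL,\BQ\rangle$ (the paper's \eqref{eq:LX_perp}) and then sandwich it between $\lambda_{\max}(\BL)$ and $\alpha\lambda_{d+1}(\BL)$. The only difference is presentational: the paper bypasses the abstract dual equality of \eqref{prog:dual_reduce} and instead plugs the specific forms of $\BT$ and $\BC$ directly into the first- and second-order conditions (averaging $\dot{\BS}=\PP_{\BS,\tau}(\BPhi)$ to obtain $\langle\BL,\BSigma_{\BS,\tau}\rangle-r(\tau)\langle\BL,\BS\BS^{\top}\rangle\ge 0$, and using $\BC_i\BS_i^{\top}+\BS_i\BC_i^{\top}=0$ to kill the cross term), whereas you contract the full equality constraint against $\BL_{\perp}$ and invoke the cone pairing $\boldsymbol{N}\in K^{*}$, $\BT\in K^{**}$ for the same effect. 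Your telescoping claim is valid; the $\BB$ terms indeed cancel via $\BL_{\perp}\BS=\BDG(\BL_{\perp}\BS\BS^{\top})\BS$ and block-diagonal adjointness, leaving exactly $\langle\BL_{\perp},\BQ\rangle=\langle\BL_{\perp},\BX_{\perp}\rangle-\langle\boldsymbol{N},\BT\rangle$.
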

\begin{proof}
Consider $\BS$ as a second-order critical point that is not equal to $\BZ\BZ^{\top}$:
\[
(\BL - \BDG(\BL\BS\BS^{\top}))\BS =0,~~~\lag \BL - \BDG(\BL\BS\BS^{\top}), \dot{\BS}\dot{\BS}^{\top} \rag\geq 0,~\forall \BS_i\dot{\BS}_i^{\top} + \dot{\BS}_i\BS_i^{\top}=0.
\]
Note that for $\dot{\BS}_i = \PP_{\BS_i,\tau}(\BPhi)$ with $d\times p$ Gaussian random matrix $\BPhi$,  it holds that
\begin{align*}
&  \lag \BL - \BDG(\BL\BS\BS^{\top}), \E\dot{\BS}\dot{\BS}^{\top} \rag \\
& ~~=  \lag \BL - \BDG(\BL\BS\BS^{\top}), \left( 
p - 2((1-\tau)d + \tau) \right)\BZ\BZ^{\top} + \BSigma_{\BS,\tau} \rag \\
& ~~= \left\lag \BL, \BSigma_{\BS,\tau} \right\rag - r(\tau)\left\lag \BL, \BS\BS^{\top}\right\rag \geq 0
\end{align*}
where $\BL\BZ=0$, $\BDG(\E\dot{\BS}\dot{\BS}^{\top}) = r(\tau)\I_{nd}$, and $\E \dot{\BS}\dot{\BS}^{\top} = \left( 
p - 2((1-\tau)d + \tau) \right)\BZ\BZ^{\top}  + \BSigma_{\BS,\tau}.$

For $\BC$ satisfying $\BC_i\BS_i^{\top} + \BS_i \BC_i^{\top} = 0,$ %satisfies $\BC_i = \delta \BY(\I_p - \BS_i^{\top}\BS_i)$ with $\BY = {\cal P}(-\BZ^{\top}\BS - n^{-1}\sum_{i=1}^n \BS_i \BS^{\top}\BZ\BS_i)$, 
we have
\[
\lag\BL - \BDG(\BL\BS\BS^{\top}), \BS\BC^{\top} \rag = \frac{1}{2}\lag\BL, \BC\BS^{\top} + \BS\BC^{\top} \rag= 0
\]
where $\lag \BDG(\BL\BS\BS^{\top}), \BS\BC^{\top}\rag = 0$
follows from $\BDG(\BS\BC^{\top}) = 0.$ Therefore, it holds that
\begin{equation}\label{eq:LX_perp}
\left\lag \BL, \left(\frac{\alpha}{\lag \BP_{\perp}, \BS\BS^{\top}\rag} - \beta r(\tau)\right) \BS\BS^{\top} - \frac{\delta}{2}(\BC\BS^{\top}+\BS\BC^{\top}) + \beta \BSigma_{\BS,\tau}  \right\rag \geq \frac{\alpha\lag \BL, \BS\BS^{\top}\rag}{\lag \BP_{\perp}, \BS\BS^{\top}\rag}
\end{equation}
for any $\beta > 0$ and $\delta,\alpha\in\RR.$ Suppose $\alpha,\beta,$ and $\delta$ satisfy the constraints in~\eqref{prog:dual2}, and then
\[
\BX_{\perp}: = \left( \left(\frac{\alpha}{\lag \BP_{\perp}, \BS\BS^{\top}\rag} - \beta r(\tau)\right) \BS\BS^{\top} - \frac{\delta}{2}(\BC\BS^{\top}+\BS\BC^{\top}) + \beta \BSigma_{\BS,\tau} \right)_{\perp}\succeq 0.
\]
Therefore, we have
\begin{align*}
& \lag \BL, \BX_{\perp}\rag - \frac{\alpha\lag \BL, \BS\BS^{\top}\rag}{\lag \BP_{\perp}, \BS\BS^{\top}\rag} \\
& \qquad \leq \lambda_{\max}(\BL) \Tr(\BX_{\perp}) - \frac{\alpha \lambda_{d+1}(\BL)}{\lag \BP_{\perp}, \BS\BS^{\top}\rag} \cdot \lag \BP_{\perp}, \BS\BS^{\top}\rag \\
& \qquad \leq \lambda_{\max}(\BL) - \alpha \lambda_{d+1}(\BL)
\end{align*}
where $\Tr(\BX_{\perp})\leq 1$ since $\BX_{\perp}$ satisfies the constraints in~\eqref{prog:dual2}. This implies that
if 
\[
\frac{\lambda_{\max}(\BL)}{\lambda_{d+1}(\BL)} < \alpha, 
\]
then 
\[
\lag \BL, \BX_{\perp}\rag - \frac{\alpha\lag \BL, \BS\BS^{\top}\rag}{\lag \BP_{\perp}, \BS\BS^{\top}\rag} \leq 0
\]
which contradicts~\eqref{eq:LX_perp}, i.e., $\lag \BL, \BX_{\perp}\rag - \alpha\lag \BL, \BS\BS^{\top}\rag/\lag \BP_{\perp}, \BS\BS^{\top}\rag \geq 0.$
Thus the only possible scenario is $\BS\BS^{\top} =  \BZ\BZ^{\top}$, which completes the proof.
\end{proof}

\section{Proof}\label{s:proof}

This section is devoted to the proof of the main theorem and to providing the technical details of the lemmas/propositions in Section~\ref{ss:roadmap}. 
The proof will proceed as follows: in Section~\ref{ss:proof_1}, we first formulate the convex program that minimizes the condition number subject to the coexistence of a spurious second-order critical point and also derive the dual problem, i.e., Proposition~\ref{prop:equivalence1} and~\ref{prop:convexrelax}; Section~\ref{ss:proof_2} gives more technical details on the properties of the linear operator ${\cal L}_{\BS_i}(\cdot)$ and ${\cal P}_{\BS_i,\tau}(\cdot)$ in~\eqref{def:LP}; Section~\ref{ss:proof_3} focuses on the proof of Lemma~\ref{lem:certificate} regarding the construction of a class of feasible dual variables, and reduces the original dual problem into a much simpler one; finally, in Section~\ref{ss:proof_4}, we solve the reduced dual problem and provide a lower bound on the condition number and finish the proof. Section~\ref{ss:proof:optimality} gives the proof of Theorem~\ref{thm:optimality}.

\subsection{Formulation of the condition number minimization problem}\label{ss:proof_1}

We first make the constraints in~\eqref{def:highprog} more precise and establish the equivalence to~\eqref{prog:original}. 
We start with the tightness of SDR: the global optimality of $\BZ\BZ^{\top}$ can be guaranteed by the KKT condition.
\begin{proposition}[Proposition 1 in \cite{L23}]
Let $\BLambda$ be an $nd\times nd$ block diagonal matrix $\BLambda = \blkdiag(\BLambda_{11},\cdots,\BLambda_{nn})$. Suppose $\BLambda$ satisfies
\[
(\BLambda - \BA)\BS = 0,~~\BLambda - \BA \succeq 0,
\]
for some $\BS\in \St(p,d)^{\otimes n}$, then $\BS\BS^{\top}$ is the  global minimizer to the SDR. Moreover, if $\BLambda - \BA$ is of rank $(n -1) d$, i.e., the $(d+1)$-th smallest eigenvalue $\lambda_{d+1}(\BLambda - \BA) > 0$, then $\BS\BS^{\top}$ is the unique global minimizer with rank-$d.$
\end{proposition}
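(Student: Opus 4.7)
The plan is to invoke standard SDP duality for the relaxation~\eqref{def:sdr}. Viewing $-\lag \BA,\BX\rag$ as the primal objective under $\BX\succeq 0$ and $\BX_{ii}=\I_d$, introducing a block-diagonal multiplier $\BLambda = \blkdiag(\BLambda_{11},\ldots,\BLambda_{nn})$ for the diagonal-block equality constraints yields the Lagrangian dual
\[
\max_{\BLambda}~-\sum_{i=1}^n\Tr(\BLambda_{ii}) \quad\text{s.t.}\quad \BLambda - \BA\succeq 0.
\]
The hypothesis $\BLambda - \BA\succeq 0$ makes the given $\BLambda$ dual feasible, and $\BS\in\St(p,d)^{\otimes n}$ makes $\BS\BS^{\top}$ primal feasible since $(\BS\BS^{\top})_{ii}=\BS_i\BS_i^{\top}=\I_d$. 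I would then verify zero duality gap using the stationarity condition $(\BLambda - \BA)\BS=0$: this gives $\lag \BLambda - \BA, \BS\BS^{\top}\rag = \Tr(\BS^{\top}(\BLambda-\BA)\BS) = 0$, which combined with the block-diagonal structure of $\BLambda$ and $(\BS\BS^{\top})_{ii}=\I_d$ forces $-\lag \BA,\BS\BS^{\top}\rag = -\sum_i\Tr(\BLambda_{ii})$. Weak duality then closes the first half of the proposition.

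For the uniqueness claim under $\rank(\BLambda-\BA)=(n-1)d$, I would invoke complementary slackness in its PSD form: if $\BX^\star$ is another primal optimum, then $\lag \BLambda - \BA, \BX^\star\rag = 0$ with both operands PSD forces $(\BLambda - \BA)\BX^\star = 0$, so the column span of $\BX^\star$ lies in the $d$-dimensional null space $\Null(\BLambda-\BA)$. A short rank count pins down this null space exactly: $\BS_1\BS_1^{\top}=\I_d$ gives $\rank(\BS)\geq d$, while $(\BLambda-\BA)\BS=0$ forces the columns of $\BS$ into the $d$-dimensional kernel, so $\rank(\BS)=d$ and $\Null(\BLambda-\BA)$ equals the column span of $\BS$. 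Writing $\BS=\BU\BR$ with $\BU\in\RR^{nd\times d}$ an orthonormal basis of this kernel and $\BR\in\RR^{d\times p}$, any PSD $\BX^\star$ supported there has the form $\BU\BK\BU^{\top}$. The constraint $(\BX^\star)_{ii}=\BU_i\BK\BU_i^{\top}=\I_d$ together with $\BU_i\BR\BR^{\top}\BU_i^{\top}=\I_d$ (which forces $\BU_i$ invertible) pins down $\BK=\BR\BR^{\top}$ uniquely, so $\BX^\star = \BU\BR\BR^{\top}\BU^{\top} = \BS\BS^{\top}$.

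The main obstacle, though modest, is the uniqueness step: rather than simply citing ``complementary slackness gives rank~$d$'', one must identify $\Null(\BLambda-\BA)$ precisely with the column span of $\BS$ via the dimension count, and then use the invertibility of each diagonal block $\BU_i$ to propagate the constraint $\BX^\star_{ii}=\I_d$ into a rigidity on the $d\times d$ Gram factor $\BK$. Everything else is routine SDP weak duality applied to the KKT certificate $(\BLambda,\BS)$; this is why the statement is imported from~\cite{L23} without a re-derivation in the present excerpt.
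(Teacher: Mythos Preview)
Your argument is correct and is the standard KKT/complementary-slackness proof of this certificate. The paper itself does not re-prove the proposition: it is quoted verbatim as Proposition~1 from~\cite{L23} and used as a black box, exactly as you surmise in your last paragraph. So there is nothing to compare against; your write-up supplies the routine duality argument that the paper omits.
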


\begin{proof}[\bf Proof of Proposition~\ref{prop:equivalence1}]
The proof is a direct computation by using the Riemannian gradient and Hessian, as well as the proposition above.
Using the proposition above, we can conclude that $\BZ\BZ^{\top}$ is the unique global minimizer if
\[
\BL := \widehat{\BLambda} - \BA\succeq 0,~~\BL\BZ = 0,~~~\widehat{\BLambda} = \BDG(\BA\BZ\BZ^{\top}).
\]
This characterizes the first constraint in~\eqref{def:highprog}, and we proceed with the second one, i.e., characterizing the second-order critical point.

To do this, we first introduce some basic calculation of the Riemannian gradient and Hessian that will be used in the computation, see details in~\cite[Lemma 1 and 2]{L23}. For a more general introduction to the optimization on manifold, the readers may refer to~\cite{AMS08,B23}. 
Consider 
\[
f(\BS) : = -\frac{1}{2}\lag \BA, \BS\BS^{\top}\rag = - \frac{1}{2}\sum_{i,j} \lag \BA_{ij}, \BS_i\BS_j^{\top}\rag,~~~\BS_i\in\St(p,d),~1\leq i\leq n,
\]
and its Riemannian gradient is given by
\[
[\nabla_{\BS}f(\BS)]_i = \sum_{j=1}^n \BA_{ij}\BS_j - \frac{1}{2} \sum_{j=1}^n ( \BA_{ij}\BS_j\BS_i^{\top} + \BS_i\BS_j^{\top}\BA_{ji}) \BS_i
\]
where the tangent space at any $\BS_i\in\St(p,d)$ is
\[
\{\dot{\BS}_i\in\RR^{d\times p}: \dot{\BS}_i\BS_i^{\top} + \BS_i\dot{\BS}_i^{\top} = 0\}.
\]

%Note that 
%\[
%\dot{\BS}_i = \BV - \frac{\BS_i\BV^{\top} + \BV \BS_i^{\top}}{2} \BS_i,
%\]
%for any $\BV\in\RR^{d\times p}.$  
%Suppose $\BS\in\St(p,d)^{\otimes n}$ 
The first-order optimality condition is
\[
 \sum_{j=1}^n \BA_{ij}\BS_j = \frac{1}{2} \sum_{j=1}^n ( \BA_{ij}\BS_j\BS_i^{\top} + \BS_i\BS_j^{\top}\BA_{ji}).
\]
We denote
\[
\BLambda_{ii} =  \frac{1}{2} \sum_{j=1}^n ( \BA_{ij}\BS_j\BS_i^{\top} + \BS_i\BS_j^{\top}\BA_{ji})
\]
and then
\[
\BLambda := \blkdiag(\BLambda_{11},\cdots,\BLambda_{nn}) = \BDG(\BA\BS\BS^{\top}).
\]
The first order condition becomes:
$(\BLambda - \BA)\BS = 0$. In fact, it holds that
\begin{align*}
 \BLambda - \BA & = \BDG(\BA\BS\BS^{\top}) - \BDG(\BA\BZ\BZ^{\top}) + \BDG(\BA\BZ\BZ^{\top}) - \BA  \\
& = \BDG(\BA\BS\BS^{\top}) - \BDG( \BDG(\BA\BZ\BZ^{\top}) \BS\BS^{\top}) + \BL \\
& = \BL - \BDG(\BL\BS\BS^{\top})
\end{align*}
where $\BL = \BDG(\BA\BZ\BZ^{\top}) - \BA$ and $[\BS\BS^{\top}]_{ii} = \I_d,~1\leq i\leq n$.
Therefore, the first order optimality condition can be represented by using the certificate matrix $\BL$, i.e., 
\[
(\BL - \BDG(\BL\BS\BS^{\top}))\BS = 0.
\]
With a bit more computations, we have the second-order condition as follows:
\[
\dot{\BS}:\nabla^2 f(\BS):\dot{\BS} : = \lag \BLambda - \BA, \dot{\BS}\dot{\BS}^{\top}\rag = \lag\BL - \BDG(\BL\BS\BS^{\top}), \dot{\BS}\dot{\BS}^{\top} \rag\geq 0
\]
for any $\dot{\BS}\in\RR^{nd\times p}$ where $\dot{\BS}^{\top} = [\dot{\BS}_1^{\top},\cdots,\dot{\BS}_n^{\top}]\in\RR^{p\times nd}$ and $\dot{\BS}_i$ is on the tangent space of $\St(p,d)$ at $\BS_i$.
%Based on the discussion above, we impose the following constraints for any data matrix $\BA$.
Now we summarize the constraints in~\eqref{def:highprog} as follows.
\begin{enumerate}
\item Sufficient condition for the unique global optimality of $\BZ$:
\[
\BL := \BDG(\BA\BZ\BZ^{\top}) - \BA\succeq 0,~~~\BL\BZ = 0,~~~\lambda_{d+1}(\BL) > 0.
\]

\item Necessary condition for local optimality of $\BS$:
\[
(\BL - \BDG(\BL\BS\BS^{\top}))\BS = 0,~~~\lag  \BL- \BDG(\BL\BS\BS^{\top}) , \dot{\BS}\dot{\BS}^{\top}\rag \geq 0,
\]
for any $\dot{\BS}$ that is on the tangent space of $\St(p,d)^{\otimes n}$ at $\BS.$

\end{enumerate}
Therefore, we arrive at the optimization problem~\ref{prog:original}.
\end{proof}

\begin{proof}[\bf Proof of Proposition~\ref{prop:convexrelax}]
Note that the optimization problem~\eqref{prog:original} is homogeneous in $\BL$, i.e., for any feasible $\BL$, then $c\BL$ is still feasible and achieves the exact same objective value for any $c> 0$. 
Thus we can minimize $t$ such that
\[
\BL_{\perp}\succeq \BP_{\perp},~~~\BL_{\perp}\preceq t\BP_{\perp}.
\]
For the last constraint in~\eqref{prog:original}, we define 
\[
K := \{\dot{\BS}\dot{\BS}^{\top}:  \BS_i\dot{\BS}_i^{\top} + \dot{\BS}_i\BS_i^{\top}= 0,~1\leq i\leq n\}
\]
is a subset of $nd\times nd$ symmetric matrices, 
and $K^*$ is the dual cone of the subset $K$ in $\RR^{nd\times nd}$:
\[
K^* := \{ \BG\in\RR^{nd\times nd}:\BG = \BG^{\top}, \lag \BG, \BK\rag\geq 0,~\forall\BK\in K \}.
\]
The dual cone $K^*$ is convex and closed, and apparently $\BL - \BDG(\BL\BS\BS^{\top})\in K^*$ holds. 

Thus a convex relaxation of~\eqref{prog:original} along with the corresponding dual variables is given by~\eqref{def:primal}.

To find a lower bound of the optimal value, it suffices to compute the dual program, and try to find out a feasible dual solution. 
We proceed to derive the dual program. The Lagrangian function is
\begin{align*}
& {\cal L}(t,\BL, \BX,\BQ,\BB,\BT) \\
& : = t - mt- \lag t \I_{nd} - \BL, \BP_{\perp}\BX\BP_{\perp} \rag - \lag \BL - \I_{nd}, \BP_{\perp}\BQ\BP_{\perp}\rag \\
& ~~+ \lag (\BL_{\perp} - \BDG(\BL_{\perp}\BS\BS^{\top})) \BS, \BB \rag - \lag \BL_{\perp} - \BDG(\BL_{\perp}\BS\BS^{\top}), \BT\rag \\
& = (1 -m- \lag\BP_{\perp}, \BX\rag)t + \lag \BL, \BP_{\perp}(\BX - \BQ + \BB\BS^{\top} - \BT)\BP_{\perp}\rag \\
& ~~ + \lag \BDG(\BL_{\perp}\BS\BS^{\top}), \BT - \BB\BS^{\top}\rag+  \lag\BP_{\perp}, \BQ\rag
\end{align*}
where $\BB\in\RR^{nd\times p}$ and
\begin{align*}
 & \lag \BDG(\BL_{\perp}\BS\BS^{\top}), \BT - \BB\BS^{\top}\rag = \lag \BL_{\perp}\BS\BS^{\top}, \BDG(\BT - \BB\BS^{\top})\rag \\
& ~~~~= \frac{1}{2} \lag \BL_{\perp}, \BDG(\BT - \BB\BS^{\top})\BS\BS^{\top} + \BS\BS^{\top} \BDG(\BT - \BB\BS^{\top}) \rag.
\end{align*}
Then
\begin{align*}
\nabla_{\BL}{\cal L} & =  \BP_{\perp}\left(\BX - \BQ + \frac{1}{2}(\BB\BS^{\top} +\BS\BB^{\top}) - \BT\right)\BP_{\perp} \\
&\qquad + \frac{1}{2}\BP_{\perp}\left( \BDG(\BT - \BB\BS^{\top})\BS\BS^{\top} + \BS\BS^{\top} \BDG(\BT - \BB\BS^{\top})\right)\BP_{\perp}, \\
\nabla_{t}{\cal L} & = 1 - m - \lag \BP_{\perp}, \BX\rag.
\end{align*}
Therefore the dual program becomes~\eqref{prog:dual_reduce}.
\end{proof}

\subsection{Supporting lemmas}\label{ss:proof_2}

We provide a few useful facts about the eigenvalues of ${\cal L}_{\BS_i,\tau}$ and ${\cal P}_{\BS_i,\tau}$ that will be used later.
\begin{lemma}[\bf Spectral properties of ${\cal L}_{\BS_i,\tau}$ and ${\cal P}_{\BS_i,\tau}$]
For any $\BS_i\in\St(p,d),$ the linear operator ${\cal L}_{\BS_i,\tau}(\cdot):\RR^{d\times p}\rightarrow \RR^{d\times p}$ is self-adjoint with eigenvalues:
\begin{itemize}
\item 1, with multiplicity $d(d+1)/2$;
\item $1-2\tau$, with multiplicity $d(d-1)/2$;
\item $0$, with multiplicity $d(p-d)$.
\end{itemize}
The eigenvalues of ${\cal P}_{\BS_i,\tau}$ are $0$, $2\tau$, and $1$ with multiplicities $d(d+1)/2$, $d(d-1)/2$ and $d(p-d)$ respectively. Therefore,  ${\cal P}_{\BS_i,\tau}(\cdot)$ is a positive semidefinite operator for $\tau \geq 0$. 

In particular, it holds that
\begin{equation}\label{eq:pp_sdp}
\lag \PP_{\BS_i,\tau}(\BY), \BY \rag \geq \lag \PP_{\BS_i,0}(\BY), \BY \rag \geq 0,~\forall 1\leq i\leq n,~~~\lag \BZ^{\top}\PP_{\BS,\tau}(\BY), \BY \rag \geq \lag \BZ^{\top}\PP_{\BS,0}(\BY), \BY \rag \geq 0
\end{equation}
 for any $\BY.$
\end{lemma}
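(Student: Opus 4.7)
The plan is to explicitly diagonalize ${\cal L}_{\BS_i,\tau}$ by exhibiting an orthogonal decomposition of $\RR^{d\times p}$ adapted to $\BS_i$. Let $\BS_i^{\perp}\in\RR^{(p-d)\times p}$ be chosen so that the block matrix $[\BS_i^{\top},\,(\BS_i^{\perp})^{\top}]\in\RR^{p\times p}$ is orthogonal, i.e.,
\[
\BS_i\BS_i^{\top}=\I_d,\quad \BS_i^{\perp}(\BS_i^{\perp})^{\top}=\I_{p-d},\quad \BS_i(\BS_i^{\perp})^{\top}=0,\quad \BS_i^{\top}\BS_i+(\BS_i^{\perp})^{\top}\BS_i^{\perp}=\I_p.
\]
Every $\BY\in\RR^{d\times p}$ admits a unique decomposition $\BY=\BA\BS_i+\BB\BS_i^{\perp}$ with $\BA=\BY\BS_i^{\top}\in\RR^{d\times d}$ and $\BB=\BY(\BS_i^{\perp})^{\top}\in\RR^{d\times(p-d)}$, and a short computation using the orthogonality relations above shows that $(\BA,\BB)\mapsto \BA\BS_i+\BB\BS_i^{\perp}$ is an isometry from $\RR^{d\times d}\oplus \RR^{d\times(p-d)}$ onto $\RR^{d\times p}$.

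Next I would substitute this parameterization into ${\cal L}_{\BS_i,\tau}$. Since $\BY\BS_i^{\top}=\BA$, we get $\BY\BS_i^{\top}\BS_i=\BA\BS_i$, and using $\BS_i\BS_i^{\top}=\I_d$, $\BS_i(\BS_i^{\perp})^{\top}=0$, we get $\BS_i\BY^{\top}\BS_i=\BA^{\top}\BS_i$. Thus
\[
{\cal L}_{\BS_i,\tau}(\BY) \;=\; \bigl((1-\tau)\BA+\tau\BA^{\top}\bigr)\BS_i+0\cdot\BS_i^{\perp}.
\]
This immediately shows the $\BB$-subspace, of dimension $d(p-d)$, lies in the kernel, yielding eigenvalue $0$ with multiplicity $d(p-d)$. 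On the $\BA$-subspace, the operator is $(1-\tau)\I+\tau\,T$, where $T$ denotes transposition. Decomposing $\BA$ into symmetric and antisymmetric parts, $T$ acts as $+\I$ and $-\I$ respectively, giving the eigenvalues $1$ on the $d(d+1)/2$-dimensional symmetric subspace and $1-2\tau$ on the $d(d-1)/2$-dimensional antisymmetric subspace. Self-adjointness of ${\cal L}_{\BS_i,\tau}$ follows from this orthogonal eigendecomposition with real eigenvalues. The spectrum of ${\cal P}_{\BS_i,\tau}=\I-{\cal L}_{\BS_i,\tau}$ is then obtained by $\lambda\mapsto 1-\lambda$, yielding $0$, $2\tau$, and $1$ with multiplicities $d(d+1)/2$, $d(d-1)/2$, $d(p-d)$. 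For $\tau\ge 0$, all three eigenvalues are nonnegative, so ${\cal P}_{\BS_i,\tau}\succeq 0$.

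For the inequality chain~\eqref{eq:pp_sdp}, the difference ${\cal P}_{\BS_i,\tau}-{\cal P}_{\BS_i,0}={\cal L}_{\BS_i,0}-{\cal L}_{\BS_i,\tau}$ sends $(\BA,\BB)$ to $(\tau(\BA-\BA^{\top}),0)$ in the same decomposition, so it has eigenvalues $2\tau$ on the antisymmetric subspace and $0$ elsewhere; hence it is psd for $\tau\ge 0$, which gives $\lag{\cal P}_{\BS_i,\tau}(\BY),\BY\rag\ge \lag{\cal P}_{\BS_i,0}(\BY),\BY\rag$, and the second inequality follows since ${\cal P}_{\BS_i,0}$ is itself psd. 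Finally, under the standing WLOG reduction $\BZ_i=\I_d$, one has $(\BZ\BY)_i=\BY$ and $({\cal P}_{\BS,\tau}(\BY))_i={\cal P}_{\BS_i,\tau}(\BY)$, so
\[
\lag \BZ^{\top}{\cal P}_{\BS,\tau}(\BY),\BY\rag=\sum_{i=1}^n\lag{\cal P}_{\BS_i,\tau}(\BY),\BY\rag,
\]
and the joint inequalities in~\eqref{eq:pp_sdp} follow by summing the per-$i$ version.

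There is no real obstacle in this lemma; the only conceptual point is to identify the correct orthogonal splitting $\RR^{d\times p}\cong\RR^{d\times d}\oplus\RR^{d\times(p-d)}$ that simultaneously diagonalizes the ``$\BY\BS_i^{\top}\BS_i$'' and the ``$\BS_i\BY^{\top}\BS_i$'' terms, after which the remaining work is the standard symmetric/antisymmetric splitting of $\RR^{d\times d}$.
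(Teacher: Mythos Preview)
Your proposal is correct and follows essentially the same approach as the paper: both identify the eigenspaces as $\{\BPsi\BS_i:\BPsi\text{ symmetric}\}$, $\{\BPsi\BS_i:\BPsi\text{ antisymmetric}\}$, and the subspace of $\BY$ with $\BY\BS_i^{\top}=0$, then read off the eigenvalues $1$, $1-2\tau$, $0$ and sum over $i$ for the $\BZ^{\top}$ inequality. Your version is slightly more explicit in introducing an orthogonal complement $\BS_i^{\perp}$ and verifying the isometry, whereas the paper parameterizes the kernel piece directly via the projection $\I_p-\BS_i^{\top}\BS_i$; this is a cosmetic difference with no substantive divergence.
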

\begin{proof}
The adjointness of ${\cal L}_{\BS_i,\tau}$ and ${\cal P}_{\BS_i,\tau}$ follows directly from the definition. We proceed with computing the eigenvalues and corresponding eigenvectors of ${\cal L}_{\BS_i}(\tau).$

\begin{enumerate}
\item For $\BY = \BPsi\BS_i$ for $\BPsi\in\RR^{d\times d}$, then
\[
{\cal L}_{\BS_i,\tau}(\BPsi\BS_i) = \BPsi \BS_i + \tau ( \BPsi^{\top} - \BPsi )\BS_i.
\]
If $\BPsi$ is symmetric, then
\[
{\cal L}_{\BS_i,\tau}(\BPsi\BS_i) = \BPsi\BS_i.
\]
Therefore, the eigenvalue equals 1 with multiplicity $d(d+1)/2$.
 
\item If $\BPsi$ is anti-symmetric, then
\[
{\cal L}_{\BS_i,\tau}(\BPsi\BS_i) = (1-2\tau)\BPsi \BS_i
\]
and the eigenvalue is $1-2\tau$ with multiplicity $d(d-1)/2$.

\item For $\BY = \BPsi(\I_p - \BS_i^{\top}\BS_i)$ with $\BPsi\in\RR^{d\times p}$, then it holds ${\cal L}_{\BS_i,\tau} (\BY) = 0$, and the eigenvalue is 0 with multiplicity $d(p-d)$.
\end{enumerate}
The spectra of $\PP_{\BS_i,\tau}(\cdot)$ follows directly, and the fact that $\lag \PP_{\BS_i,\tau}(\BY), \BY \rag \geq \lag \PP_{\BS_i,0}(\BY), \BY \rag,~\forall 1\leq i\leq n$ immediately gives $\lag \BZ^{\top}\PP_{\BS,\tau}(\BY), \BY \rag \geq \lag \BZ^{\top}\PP_{\BS,0}(\BY), \BY \rag$.
\end{proof}

The next lemma is crucial which computes the covariance of ${\cal L}_{\BS,\tau}$ and ${\cal P}_{\BS,\tau}$.
\begin{lemma}[\bf ``Covariance" of ${\cal L}_{\BS,\tau}$ and ${\cal P}_{\BS,\tau}$]
Let
\begin{equation}\label{def:qr}
\begin{aligned}
q(\tau)  := (d-2)\tau^2 - 2(d-1)\tau + d, ~~~
r(\tau) : =  2(d-1)\tau^2 + p - d,
\end{aligned}
\end{equation}
be two quadratic functions that are nonnegative on $[0,1]$ for any $p\geq d+1.$ Define
\[
\BSigma_{\BS,\tau} : = \E_{\BPhi} \left[ {\cal L}_{\BS,\tau}(\BPhi){\cal L}_{\BS,\tau}(\BPhi)^{\top}\right] 
\]
where $\BPhi$ is a $d\times p$ Gaussian random matrix.
The $(i,j)$-block of $\BSigma_{\BS,\tau}$ equals
\begin{align*}
(\BSigma_{\BS,\tau})_{ij} = (1-\tau)^2\|\BS_i\BS_j^{\top}\|_F^2\I_d + (1-\tau)\tau \left( \BS_i\BS_j^{\top}\BS_j\BS_i^{\top} +  \BS_j\BS_i^{\top}\BS_i\BS_j^{\top}\right) +\tau^2 \lag \BS_i,\BS_j\rag\BS_i\BS_j^{\top}
\end{align*}
and its diagonal block is
\[
\BDG(\BSigma_{\BS,\tau}) = \BDG(\E {\cal L}_{\BS,\tau}(\BPhi) {\cal L}_{\BS,\tau}(\BPhi)^{\top}) %=( (1-\tau)^2 d + 2\tau(1-\tau) + d\tau^2)\I_{nd} 
= (q(\tau)+d\tau^2)\I_{nd}. %(1-2\tau+ 2\tau^2) d \I_d +2 (1-\tau)\tau \I_d.
\]
Moreover, it holds that
\begin{equation}\label{eq:LLP}
\begin{aligned}
\Tr(\BSigma_{\BS,\tau})&  =n\left( (1-\tau)^2d^2 + 2(1-\tau)\tau d  + \tau^2d^2\right) %\\
%& = nd ( 2(d-1)\tau^2  - 2(d-1)\tau d + d )  
= nd (q(\tau)+d\tau^2), \\
\lag\BSigma_{\BS,\tau},\BZ\BZ^{\top}\rag 
& = \E \| \sum_{i=1}^n {\cal L}_{\BS_i,\tau}(\BPhi) \|_F^2  
= q(\tau) \|\BS\BS^{\top}\|_F^2  + \tau^2 \|\Tr_d(\BS\BS^{\top})\|_F^2,
\end{aligned}
\end{equation}
where $\Tr_d(\BS\BS^{\top})\in\RR^{n\times n}$ is the partial trace of $\BS\BS^{\top}$, i.e., $[\Tr_d(\BS\BS^{\top})]_{ij} = \lag \BS_i,\BS_j\rag$.
Also
\begin{align*}
\E {\cal P}_{\BS,\tau}(\BPhi) {\cal P}_{\BS,\tau}(\BPhi)^{\top} & = (2(d-1)\tau + p - 2d)\BZ\BZ^{\top} + \BSigma_{\BS,\tau}, \\
\BDG(\E {\cal P}_{\BS,\tau}(\BPhi) {\cal P}_{\BS,\tau}(\BPhi)^{\top}) & = ( 2(d-1)\tau^2+p-d ) \I_{nd} = r(\tau) \I_{nd}.
\end{align*}

\end{lemma}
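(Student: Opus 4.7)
The plan is to prove everything by direct Gaussian second-moment calculation on $\BPhi \in \RR^{d \times p}$ with i.i.d.\ standard entries, using three elementary identities that all follow in one line from $\E[\Phi_{ij}\Phi_{kl}] = \delta_{ik}\delta_{jl}$: (i) $\E[\BPhi A \BPhi^{\top}] = \Tr(A)\I_d$ for $A \in \RR^{p \times p}$, (ii) $\E[\BPhi^{\top} B \BPhi] = \Tr(B)\I_p$ for $B \in \RR^{d \times d}$, and (iii) $\E[\BPhi A \BPhi B] = A^{\top} B$ whenever the dimensions are compatible (e.g.\ $A \in \RR^{p\times d}$ and $B \in \RR^{p \times q}$). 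Before touching $\BSigma_{\BS,\tau}$, I quickly verify nonnegativity of $r$ and $q$ on $[0,1]$: $r(\tau) = 2(d-1)\tau^{2} + (p-d) \geq 0$ is immediate from $p \geq d+1$; for $q$, the cases $d=1$ and $d=2$ reduce to $1-\tau^{2}$ and $2(1-\tau)$ respectively, while for $d \geq 3$ the parabola $q$ has vertex at $\tau = (d-1)/(d-2) > 1$, so $q$ is monotone decreasing on $[0,1]$ from $q(0) = d$ down to $q(1) = 0$.

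Next I compute $(\BSigma_{\BS,\tau})_{ij}$ by writing $\mathcal{L}_{\BS_i,\tau}(\BPhi) = (1-\tau)\BPhi\BS_i^{\top}\BS_i + \tau\BS_i\BPhi^{\top}\BS_i$ and expanding the outer product into four pieces. Identity (i) gives the $(1-\tau)^{2}$ piece as $\Tr(\BS_i^{\top}\BS_i\BS_j^{\top}\BS_j)\I_d = \|\BS_i\BS_j^{\top}\|_F^{2}\I_d$; identity (iii), applied to the two cross terms with $A = \BS_i^{\top}\BS_i\BS_j^{\top}$, $B = \BS_j^{\top}$ (and its transpose), gives $(1-\tau)\tau(\BS_j\BS_i^{\top}\BS_i\BS_j^{\top} + \BS_i\BS_j^{\top}\BS_j\BS_i^{\top})$; and identity (ii) applied to the middle of $\BS_i\BPhi^{\top}(\BS_i\BS_j^{\top})\BPhi\BS_j^{\top}$ produces $\tau^{2}\langle\BS_i,\BS_j\rangle\BS_i\BS_j^{\top}$, which together give the claimed block formula. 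Setting $i=j$ and using $\BS_i\BS_i^{\top} = \I_d$ collapses the diagonal to $[(1-\tau)^{2}d + 2(1-\tau)\tau + \tau^{2}d]\I_d$, which is exactly $(q(\tau)+d\tau^{2})\I_d$ after the one-line algebraic check $(1-\tau)^{2}d + 2(1-\tau)\tau = (d-2)\tau^{2} - 2(d-1)\tau + d = q(\tau)$; summing over $n$ diagonal blocks then yields $\Tr(\BSigma_{\BS,\tau}) = nd(q(\tau)+d\tau^{2})$.

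For $\langle \BSigma_{\BS,\tau}, \BZ\BZ^{\top}\rangle = \sum_{i,j}\Tr((\BSigma_{\BS,\tau})_{ij})$, taking the trace of the block formula reduces each summand to $q(\tau)\|\BS_i\BS_j^{\top}\|_F^{2} + \tau^{2}\langle\BS_i,\BS_j\rangle^{2}$, and summing over $(i,j)$ uses $\sum_{i,j}\|\BS_i\BS_j^{\top}\|_F^{2} = \|\BS\BS^{\top}\|_F^{2}$ and $\sum_{i,j}\langle\BS_i,\BS_j\rangle^{2} = \|\Tr_d(\BS\BS^{\top})\|_F^{2}$ to finish. Finally, for $\mathcal{P}_{\BS,\tau}(\BPhi) = \BZ\BPhi - \mathcal{L}_{\BS,\tau}(\BPhi)$, the outer product $\E[\mathcal{P}_{\BS,\tau}(\BPhi)\mathcal{P}_{\BS,\tau}(\BPhi)^{\top}]$ expands into $\E[\BZ\BPhi\BPhi^{\top}\BZ^{\top}] = p\BZ\BZ^{\top}$, the piece $\BSigma_{\BS,\tau}$, and two cross terms; using $\BZ_i = \I_d$ (WLOG), the $(i,j)$-block of $\E[\BZ\BPhi\mathcal{L}_{\BS,\tau}(\BPhi)^{\top}]$ reduces via identities (i) and (iii) to $((1-\tau)d + \tau)\I_d$ independently of $i$, so the two cross contributions sum to $2((1-\tau)d + \tau)\BZ\BZ^{\top}$. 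Collecting gives $(p - 2(1-\tau)d - 2\tau)\BZ\BZ^{\top} + \BSigma_{\BS,\tau} = (2(d-1)\tau + p - 2d)\BZ\BZ^{\top} + \BSigma_{\BS,\tau}$, and taking $\BDG$ adds $(2(d-1)\tau + p - 2d)\I_{nd}$ to $(q(\tau)+d\tau^{2})\I_{nd}$, whose sum telescopes to $(p - d + 2(d-1)\tau^{2})\I_{nd} = r(\tau)\I_{nd}$. There is no conceptual obstacle here; the whole lemma is symbolic Gaussian-moment algebra, and the only care needed is correct dimension bookkeeping in identity (iii) and remembering that $\BS_i\BS_i^{\top} = \I_d$ may only be invoked on diagonal blocks, not on the off-diagonal ones.
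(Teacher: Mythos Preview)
Your proposal is correct and follows essentially the same approach as the paper: both expand $\mathcal{L}_{\BS_i,\tau}(\BPhi)\mathcal{L}_{\BS_j,\tau}(\BPhi)^{\top}$ into four Gaussian second-moment terms, evaluate them via the identities you list, specialize $i=j$ for the diagonal, sum traces for $\langle\BSigma_{\BS,\tau},\BZ\BZ^{\top}\rangle$, and handle $\mathcal{P}_{\BS,\tau}$ by computing the cross term $\E[\BPhi\,\mathcal{L}_{\BS_j,\tau}(\BPhi)^{\top}] = ((1-\tau)d+\tau)\I_d$. The only addition you make beyond the paper's proof is the explicit verification that $q,r\geq 0$ on $[0,1]$, which the paper asserts but does not spell out.
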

\begin{proof}
Note that
\begin{align*}
 (\BSigma_{\BS,\tau})_{ij}  & = \E {\cal L}_{\BS_i,\tau}(\BPhi){\cal L}_{\BS_j,\tau}(\BPhi)^{\top} \\
&  = \E \left((1-\tau)\BPhi\BS_i^{\top}\BS_i + \tau\BS_i\BPhi^{\top}\BS_i\right)\left((1-\tau)\BPhi\BS_j^{\top}\BS_j + \tau\BS_j\BPhi^{\top}\BS_j\right)^{\top} \\
& = (1-\tau)^2 \E (\BPhi\BS_i^{\top}\BS_i\BS_j^{\top}\BS_j\BPhi^{\top} )+ (1-\tau)\tau \E (\BS_i \BPhi^{\top}\BS_i\BS_j^{\top}\BS_j\BPhi^{\top} ) \\
& \qquad +  (1-\tau)\tau \E(\BPhi\BS_i^{\top}\BS_i\BS_j^{\top}\BPhi\BS_j^{\top}) + \tau^2 \E (\BS_i\BPhi^{\top}\BS_i\BS_j^{\top}\BPhi\BS_j^{\top} ) \\
&= (1-\tau)^2\|\BS_i\BS_j^{\top}\|_F^2\I_d + (1-\tau)\tau \left( \BS_i\BS_j^{\top}\BS_j\BS_i^{\top} + \BS_j\BS_i^{\top}\BS_i\BS_j^{\top}\right) + \tau^2\lag \BS_i,\BS_j\rag\BS_i\BS_j^{\top}.
\end{align*}
For $i=j$, then
\[
 (\BSigma_{\BS,\tau})_{ii} = d (1-\tau)^2 \I_d + 2(1-\tau)\tau\I_d + d\tau^2\I_d = (2 (d-1)\tau^2 - 2(d-1)\tau + d)\I_d = (q(\tau) + d\tau^2)\I_d
\]
which implies 
$%\E_{\BPhi}\|{\cal L}_{\BS,\tau}(\BPhi)\|_F^2 = \sum_{i=1}^n \Tr \E {\cal L}_{\BS_i,\tau}(\BPhi){\cal L}_{\BS_i,\tau}(\BPhi)^{\top}  = 
\Tr(\BSigma_{\BS,\tau}) = nd(q(\tau)+d\tau^2)$
and 
\begin{align*}
%\E \| \BZ^{\top}{\cal L}_{\BS,\tau}(\BPhi) \|_F^2 &  = 
\lag \BZ\BZ^{\top}, \BSigma_{\BS,\tau}\rag
& = d(1-\tau)^2 \sum_{i,j}\|\BS_i\BS_j^{\top}\|_F^2 + 2 (1-\tau)\tau \sum_{i,j} \|\BS_i\BS_j^{\top}\|_F^2 + \tau^2\sum_{i,j}\lag \BS_i,\BS_j\rag^2 \\
& = q(\tau)\|\BS\BS^{\top}\|_F^2 + \tau^2 \|\Tr_d(\BS\BS^{\top})\|_F^2
\end{align*}
where $\Tr_d(\BS\BS^{\top})\in\RR^{n\times n}$ is the partial trace of $\BS\BS^{\top}$ whose $(i,j)$-entry is $\lag \BS_i,\BS_j\rag$.

For $\E {\cal P}_{\BS,\tau}(\BPhi) {\cal P}_{\BS,\tau}(\BPhi)^{\top}$, it holds that
\begin{align*}
\E {\cal P}_{\BS_i,\tau}(\BPhi) {\cal P}_{\BS_j,\tau}(\BPhi)^{\top} & = \E (\BPhi - {\cal L}_{\BS_i,\tau}(\BPhi))(\BPhi - {\cal L}_{\BS_j,\tau}(\BPhi))^{\top} \\
& = (2(d-1)\tau + p - 2d )\I_d + (\BSigma_{\BS,\tau})_{ij}
\end{align*}
where
\begin{align*}
\E {\cal L}_{\BS_i,\tau}(\BPhi)\BPhi^{\top} & = (1-\tau)\E \BPhi\BS_i^{\top}\BS_i \BPhi^{\top} + \tau\E\BS_i\BPhi^{\top}\BS_i\BPhi^{\top} %= ((1-\tau) d + \tau )\I_d
 = ( - (d-1)\tau + d)\I_d.
\end{align*}
In particular, then diagonal block of $\E \E {\cal P}_{\BS,\tau}(\BPhi) {\cal P}_{\BS,\tau}(\BPhi)^{\top}$ is given by
\begin{align*}
\E {\cal P}_{\BS_i,\tau}(\BPhi) {\cal P}_{\BS_i,\tau}(\BPhi)^{\top} & = (q(\tau) + d\tau^2 + 2(d-1)\tau + p - 2d)\I_d %= (2(d-1)\tau^2 + p - d)\I_d 
= r(\tau)\I_d%& = (p - 2((1-\tau)d + \tau))\I_d +  (1-\tau)^2d \I_d +2 (1-\tau)\tau \I_d  + d\tau^2\I_d \\
%& = (p + (2\tau^2-1)d -  2\tau^2) \I_d = r(\tau)\I_d.
\end{align*}
and thus $\BDG(\E {\cal P}_{\BS,\tau}(\BPhi) {\cal P}_{\BS,\tau}(\BPhi)^{\top} ) = r(\tau)\I_{nd}.$
\end{proof}

\subsection{Construction of dual variables in~\eqref{prog:dual_reduce}}\label{ss:proof_3}

\begin{proof}[\bf Proof of Lemma~\ref{lem:certificate}]
%Consider $\BT = \beta \E \dot{\BS}\dot{\BS}^{\top}$ where
%\[
%\dot{\BS}_i  = \BPhi(\I_p - \BS_i^{\top}\BS_i),~~1\leq i\leq n,
%\]
%where $\BPhi\in\RR^{d\times p}$ is a Gaussian random matrix.
%Then
%\begin{align*}
%\E \dot{\BS}_i \dot{\BS}_j^{\top} & = \Tr((\I_p - \BS_i^{\top}\BS_i)(\I_p - \BS_j^{\top}\BS_j))\I_d = (p - 2d)\I_d + \|\BS_i\BS_j^{\top}\|_F^2\I_d
%\end{align*}
%and 
%\[
%\BT =\beta\left( (p-2d)\BZ\BZ^{\top} + \sum_{i,j}\lag \BS_i, \BS_j\rag\be_i\be_j^{\top}\otimes \BS_i\BS_j^{\top}\right) \in K^{**}.
%\]
We start with showing $\BT$ belonging to the convex hull of $K: = \{\dot{\BS}\dot{\BS}^{\top}: \dot{\BS}_i\BS_i^{\top} + \BS_i\dot{\BS}_i^{\top} = 0,~1\leq i\leq n\}.$ 
Denote $\BE_{k\ell}$ as the $d\times p$ matrix with the only nonzero entry located at $(k,\ell)$ and equal to 1, i.e., $\{\BE_{k\ell}\}$ forms the canonical basis in $\RR^{d\times p}.$ 

By letting $\varphi_{k\ell}$ be i.i.d. $\mathcal{N}(0,1)$ random variables with $1\leq k\leq d,1\leq \ell\leq p$, we have
\[
\BPhi = \sum_{k,\ell} \varphi_{k\ell}\BE_{k\ell},~~\varphi_{k\ell}\overset{\rm i.i.d.}{\sim}\mathcal{N}(0,1)
\]
and then
\begin{align*}
\BT & = \beta \E \PP_{\BS,\tau}(\BPhi)\PP_{\BS,\tau}(\BPhi)^{\top} \\
& = \beta \E \sum_{k,\ell} \varphi_{k\ell}\PP_{\BS,\tau}(\BE_{k\ell})  \sum_{k',\ell'} \varphi_{k'\ell'}\PP_{\BS,\tau}(\BE_{k'\ell'})^{\top} \\
& = \beta \sum_{k,\ell} \PP_{\BS,\tau}(\BE_{k\ell}) \PP_{\BS,\tau}(\BE_{k\ell})^{\top} = \beta ( (2(d-1)\tau+p-2d) \BZ\BZ^{\top} + \BSigma_{\BS,\tau} )
\end{align*}
where $\BDG(\BT) = \beta r(\tau) \I_{nd}.$
Each $ \PP_{\BS,\tau}(\BE_{k\ell}) \PP_{\BS,\tau}(\BE_{k\ell})^{\top}$ is an element in $K$ since $\PP_{\BS_i,\tau}(\BE_{k\ell})$ is in the tangent space of $\St(p,d)$ at $\BS_i$, and thus $\BT$ is in $K$.

The linear constraints in~\eqref{prog:dual_reduce} gives
\begin{align*}
%\BX_{\perp} & = \BQ_{\perp} - \BP_{\perp}\left( \frac{1}{2}(\BB\BS^{\top}+\BS\BB^{\top}) - \BT\right)\BP_{\perp}  \\
%&\qquad - \frac{1}{2}\BP_{\perp}\left( \BDG(\BT - \BB\BS^{\top})\BS\BS^{\top} + \BS\BS^{\top} \BDG(\BT - \BB\BS^{\top})\right)\BP_{\perp} \\
\BX_{\perp} & = \BQ_{\perp} - \frac{1}{2}\BP_{\perp} \left(( \BB - \BDG(\BB\BS^{\top})\BS)\BS^{\top} + \BS( \BB - \BDG(\BB\BS^{\top})\BS)^{\top}  \right) \BP_{\perp} \\
&\qquad + \BT_{\perp} - \frac{1}{2}\BP_{\perp} (\BDG(\BT)\BS\BS^{\top}+\BS\BS^{\top}\BDG(\BT)) \\
& = \left(\frac{\alpha}{\lag \BP_{\perp}, \BS\BS^{\top}\rag} - \beta r(\tau)\right)(\BS\BS^{\top})_{\perp} -\frac{\delta}{2} (\BC\BS^{\top} + \BS\BC^{\top})_{\perp} +\beta (\BSigma_{\BS,\tau})_{\perp}
\end{align*}
where
\[
\BQ = \frac{\alpha\BS\BS^{\top}}{\lag \BP_{\perp}, \BS\BS^{\top}\rag},~~ \BB - \BDG(\BB\BS^{\top})\BS = \delta \BC,~~\BDG(\BC\BS^{\top}) = 0,
\]
and each block $\BC_i = \PP_{\BS_i,\tau}(\BY)$ of $\BC$ is on the tangent space of $\St(p,d)$ at $\BS_i.$

Next, we will  show $\BX_{\perp}\succeq 0$ for some choices of $\BY$ in $\BC$ such that we have a dual feasible set of solutions. 
Consider $\BC = {\cal P}_{\BS,\tau}(\BY)\in\RR^{nd\times p}$
where $\|\BY\|_F = 1$. To prove $\BX_{\perp}\succeq 0$, it suffices to have~\eqref{eq:key_psd}
where
\[
(\BC\BC^{\top})_{\perp} = ((\BZ\BY - {\cal L}_{\BS,\tau}(\BY))(\BZ\BY - {\cal L}_{\BS,\tau}(\BY))^{\top})_{\perp} = ({\cal L}_{\BS,\tau}(\BY){\cal L}_{\BS,\tau}(\BY)^{\top})_{\perp}.
\]
Now we first assume~\eqref{eq:key_psd} and show the lemma:
\begin{align*}
\BX_{\perp} & = \left(  \frac{\alpha}{\lag \BP_{\perp}, \BS\BS^{\top}\rag}  - \beta r(\tau) \right) (\BS\BS^{\top})_{\perp} - \frac{\delta}{2}(\BC\BS^{\top} + \BS\BC^{\top})_{\perp}  + \beta \left(\BSigma_{\BS,\tau}\right)_{\perp} \\
& \succeq \left(  \frac{\alpha}{\lag \BP_{\perp}, \BS\BS^{\top}\rag}  - \beta r(\tau)\right) (\BS\BS^{\top})_{\perp} - \frac{\delta}{2}(\BC\BS^{\top} + \BS\BC^{\top})_{\perp}  + \beta \BP_{\perp}\BC\BC^{\top}\BP_{\perp} \\
& = \BP_{\perp} 
\begin{bmatrix}
\BS & \BC
\end{bmatrix}
\begin{bmatrix}
 \left( \dfrac{\alpha}{\lag \BP_{\perp}, \BS\BS^{\top}\rag}  - \beta r(\tau)\right)\I_p & - \dfrac{\delta}{2}\I_p \\
 -\dfrac{\delta}{2}\I_p & \beta\I_p
\end{bmatrix}
\begin{bmatrix}
\BS & \BC
\end{bmatrix}^{\top}\BP_{\perp}
\end{align*}
is positive semidefinite if 
\[
\begin{bmatrix}
 \dfrac{\alpha}{\lag \BP_{\perp}, \BS\BS^{\top}\rag}  -\beta r(\tau) & - \dfrac{\delta}{2} \\
 -\dfrac{\delta}{2} & \beta
\end{bmatrix} \succeq 0.
\]

We proceed to prove~\eqref{eq:key_psd}.
Let $\BY = \sum_k \sigma_k\bu_k\bv_k^{\top}\in\RR^{d\times p}$ be the SVD, and $\|\BY\|_F^2 =\sum_k \sigma_k^2 =1$.
Also we define
\[
\BY_k :=  {\cal L}_{\BS,\tau}(\bu_k\bv_k^{\top}). %= 
%\begin{bmatrix}
%\BS_1 \bv_k\bu_k^{\top} \BS_1 \\
%\vdots \\
%\BS_n \bv_k\bu_k^{\top} \BS_n
%\end{bmatrix}
\]
It holds that
\begin{align*}
{\cal L}_{\BS,\tau}(\BY){\cal L}_{\BS,\tau}(\BY)^{\top}
& = \left( \sum_{k=1}^d\sigma_k {\cal L}_{\BS,\tau}(\bu_k\bv_k^{\top})\right) 
\left( \sum_{\ell=1}^d  \sigma_{\ell}{\cal L}_{\BS,\tau}(\bu_{\ell}\bv_{\ell}^{\top}) \right)^{\top} \\
& = \left( \sum_{k=1}^d \sigma_k \BY_k\right)\left( \sum_{\ell=1}^d \sigma_{\ell} \BY_{\ell}\right)^{\top} \\
& = \sum_{1\leq k,\ell\leq d} \sigma_k\sigma_{\ell} \BY_k\BY_{\ell}^{\top} \preceq \frac{1}{2} \sum_{1\leq k,\ell\leq d} (\sigma_{\ell}^2\BY_k\BY_k^{\top} +\sigma_k^2\BY_{\ell}\BY_{\ell}^{\top}  )  \\
& = \sum_{\ell=1}^d \sigma_{\ell}^2 \sum_{k=1}^d \BY_k\BY_k^{\top} = \sum_{k=1}^d \BY_k\BY_k^{\top}.
\end{align*}
It remains to estimate $\BY_k\BY_k^{\top}$:
\begin{align*}
\BY_k\BY_k^{\top} & = {\cal L}_{\BS,\tau}(\bu_k\bv_k^{\top})  {\cal L}_{\BS,\tau}(\bu_k\bv_k^{\top})^{\top} % \\
%& \preceq \sum_{i,j}\|\BS_i\BS_j^{\top}\|_F^2\be_i\be_j^{\top}\otimes \bu_k\bu_k^{\top} 
\end{align*}
and then
\[
\sum_{k=1}^d \BY_k\BY_k^{\top} \preceq \sum_{1\leq k\leq d,1\leq\ell\leq p}   {\cal L}_{\BS,\tau}(\bu_k\bv_{\ell}^{\top})  {\cal L}_{\BS,\tau}(\bu_k\bv_{\ell}^{\top})^{\top}
\]
where we add a few more vectors $\{\bv_{\ell}\}_{d+1}^p$ so that $\{\bv_{\ell}\}_{\ell=1}^p$ form an orthonormal basis in $\RR^p.$

Note that  for each pair of $(k,\ell)$, we have
\begin{align*}
 & {\cal L}_{\BS_i,\tau}(\bu_k\bv_{\ell}^{\top})  {\cal L}_{\BS_j,\tau}(\bu_k\bv_{\ell}^{\top})^{\top} \\
& \qquad = ( (1-\tau) \bu_k\bv_{\ell}^{\top}\BS_i^{\top} + \tau \BS_i \bv_{\ell}\bu_k^{\top} )\BS_i \BS_j^{\top}(  (1-\tau) \BS_j \bv_{\ell}\bu_k^{\top} + \tau \bu_k\bv_{\ell}^{\top}\BS_j^{\top} ) \\
& \qquad = (1-\tau)\tau \left( \bu_k\bu_k^{\top}\BS_j\BS_i^{\top}\BS_i\bv_{\ell}\bv_{\ell}^{\top}\BS_j^{\top} 
+ \BS_i\bv_{\ell}\bv_{\ell}^{\top}\BS_j^{\top}\BS_j\BS_i^{\top}\bu_k\bu_k^{\top}\right) \\
&\qquad\qquad + \tau^2 \lag \BS_i\BS_j^{\top}, \bu_k\bu_k^{\top}\rag \BS_i\bv_{\ell}\bv_{\ell}^{\top}\BS_j^{\top} + (1-\tau)^2 \lag \BS_i^{\top}\BS_i\BS_j^{\top}\BS_j, \bv_{\ell}\bv_{\ell}^{\top}\rag\bu_k\bu_k^{\top}
\end{align*}
then 
\begin{align*}
& \sum_{k=1}^d \BY_k\BY_k^{\top}  \preceq
\sum_{1\leq k\leq d,1\leq\ell\leq p}{\cal L}_{\BS,\tau}(\bu_k\bv_{\ell}^{\top})  {\cal L}_{\BS,\tau}(\bu_k\bv_{\ell}^{\top})^{\top} \\
& = \sum_{i,j}\left[ \be_i\be_j^{\top}\otimes \sum_{1\leq k\leq d,1\leq \ell\leq p} {\cal L}_{\BS_i,\tau}(\bu_k\bv_{\ell}^{\top})  {\cal L}_{\BS_j,\tau}(\bu_k\bv_{\ell}^{\top})^{\top}\right]  \\
& = \sum_{i,j} \be_i\be_j^{\top}\otimes  \left[ (1-\tau)^2\|\BS_i\BS_j^{\top}\|_F^2\I_d + (1-\tau)\tau (\BS_i\BS_j^{\top}\BS_j\BS_i^{\top} + \BS_j\BS_i^{\top}\BS_i\BS_j^{\top}) + \tau^2\lag \BS_i,\BS_j\rag\BS_i\BS_j^{\top} \right]  \\
& = \E {\cal L}_{\BS,\tau}(\BPhi){\cal L}_{\BS,\tau}(\BPhi)^{\top}
\end{align*}
provided that  $\sum_{k=1}^d \bu_k\bu_k^{\top}= \I_d$ and $\sum_{\ell=1}^p \bv_{\ell}\bv_{\ell}^{\top} = \I_p.$
\end{proof}

\subsection{Reduction of the dual program $\alpha$ and its solution}\label{ss:proof_4}

\begin{proof}[\bf Proof of Lemma~\ref{lem:alpha_find}]
The proof focuses on solving the reduced dual program~\eqref{prog:dual2}.

\noindent{\bf Step 1: determining $\BY$.} We start with picking a proper $\BY$ in~\eqref{prog:dual2} by looking into $\Tr(\BX_{\perp}) \leq 1.$ 

\begin{equation}\label{eq:linearcons}
\begin{aligned}
\Tr(\BX_{\perp}) & = \alpha -\beta r(\tau) \lag\BP_{\perp}, \BS\BS^{\top}\rag - \delta \lag \BP_{\perp}\BC, \BP_{\perp}\BS\rag + \beta \Tr\left( \BSigma_{\BS,\tau}\right)_{\perp}   \leq 1
\end{aligned}
\end{equation}
where 
\[
\Tr\left( \BSigma_{\BS,\tau}\right)_{\perp} = q(\tau) (nd - n^{-1}\|\BS\BS^{\top}\|_F^2)  + \tau^2 (nd^2 - n^{-1}\|\Tr_d(\BS\BS^{\top})\|_F^2)
\]
follows from~\eqref{eq:LLP}.

Note that for $\BC = \PP_{\BS,\tau}(\BY)$, we have
\begin{align*}
\lag \BP_{\perp}\BC, \BP_{\perp}\BS\rag 
& = \lag \BC, \BS\rag - \frac{1}{n} \lag \BC, \BZ\BZ^{\top}\BS\rag  \\
& = \sum_{i=1}^n \lag \BY - {\cal L}_{\BS_i,\tau}(\BY), \BS_i\rag - n^{-1}\sum_{i=1}^n \lag \BY - {\cal L}_{\BS_i,\tau}(\BY), \BZ^{\top}\BS\rag \\
& =  -\lag \BY, \BZ^{\top}\BS\rag + n^{-1} \sum_{i=1}^n \lag \BY, {\cal L}_{\BS_i,\tau}(\BZ^{\top}\BS)\rag \\
& =- \left\lag \BY, \BZ^{\top}\BS - n^{-1}\BZ^{\top}{\cal L}_{\BS,\tau}(\BZ^{\top}\BS) \right\rag \\
& = - \lag \BY, n^{-1}\BZ^{\top}\PP_{\BS,\tau}(\BZ^{\top}\BS) \rag% \\
%& = -\lag \BY, \BZ^{\top}(\I_{nd} -n^{-1} \BS\BS^{\top})\BS\rag \\
%& = - \lag \BZ\bz(\BS\bu)^{\top}, \I_{nd} -n^{-1} \BS\BS^{\top}\rag
\end{align*}
where $\lag \BY - {\cal L}_{\BS_i,\tau}(\BY), \BS_i\rag = 0$ because $(\BY - {\cal L}_{\BS_i,\tau}(\BY))\BS_i^{\top}$ is anti-symmetric.%${\cal L}_{\BS_i}(\cdot)$ is a self-adjoint linear operator.

To maximize $\alpha$, it is natural to  maximize $\lag \BP_{\perp}\BC, \BP_{\perp}\BS\rag$ over $\BY$:
\[
\sup_{\|\BY\|_F =1}\lag \BP_{\perp}\BC, \BP_{\perp}\BS\rag = n^{-1} \left\| \BZ^{\top}\PP_{\BS,\tau}(\BZ^{\top}\BS)   \right\|_F
\]
where $\BY = - \BZ^{\top}\PP_{\BS,\tau}(\BZ^{\top}\BS)/\|\BZ^{\top}\PP_{\BS,\tau}(\BZ^{\top}\BS)\|_F.$
Then the optimal value to the dual program~\eqref{prog:dual2} is further lower bounded by that to the following program:
\begin{equation}\label{prog:finaldual}
\begin{aligned}
\max~& \alpha \\
\text{s.t.}~
& \left(\frac{\alpha}{\lag \BP_{\perp}, \BS\BS^{\top}\rag} -\beta r(\tau) \right)\beta \geq \frac{\delta^2}{4}, \\
& \frac{\alpha}{\lag \BP_{\perp}, \BS\BS^{\top}\rag} \geq \beta r(\tau), ~~\beta \geq 0, \\
& \alpha - \beta r(\tau) \lag\BP_{\perp}, \BS\BS^{\top}\rag - \delta n^{-1} \left\| \BZ^{\top}\PP_{\BS,\tau}(\BZ^{\top}\BS)   \right\|_F  + \beta\Tr(\BSigma_{\BS,\tau})_{\perp} \leq 1.
\end{aligned}
\end{equation}

\noindent{\bf Step 2: solving~\eqref{prog:finaldual} over $\beta$ and $\gamma$.} 
Note that
\[
\frac{\alpha \beta}{\lag \BP_{\perp}, \BS\BS^{\top}\rag r(\tau)} - \beta^2 \geq \frac{\delta^2}{4 r(\tau)}
\]
which is equivalent to
\begin{align*}
 \frac{\alpha^2}{4\lag \BP_{\perp}, \BS\BS^{\top}\rag^2 r(\tau)^2}  \geq \left( \frac{\alpha}{2\lag \BP_{\perp}, \BS\BS^{\top}\rag r(\tau)}-\beta\right)^2 + \frac{\delta^2}{4 r(\tau)}.
\end{align*}

%The first linear constraint is
%\[
%\beta \left( (p-d) \lag\BP_{\perp}, \BS\BS^{\top}\rag - (nd - n^{-1}\|\BS\BS^{\top}\|_F^2)  \right) + \delta \|\BZ^{\top}\BS - n^{-1}\sum_{i=1}^n \BS_i \BS^{\top}\BZ\BS_i\|  \geq \alpha - 1.
%\]

Let 
\begin{align*}
\frac{\alpha \cos\theta }{2\lag \BP_{\perp}, \BS\BS^{\top}\rag r(\tau)}  = \frac{\alpha}{2\lag \BP_{\perp}, \BS\BS^{\top}\rag r(\tau)}-\beta, ~~~\frac{\alpha \sin\theta }{2\lag \BP_{\perp}, \BS\BS^{\top}\rag r(\tau)}  = \frac{\delta}{2\sqrt{ r(\tau)}},
\end{align*}
which means
\begin{equation}\label{def:betadelta}
 \beta = \frac{\alpha( 1 - \cos\theta )}{2\lag \BP_{\perp}, \BS\BS^{\top}\rag r(\tau)},\qquad  \delta = \frac{\alpha\sin\theta}{\lag \BP_{\perp}, \BS\BS^{\top}\rag\sqrt{ r(\tau)}}.
\end{equation}
Finally, we verify if the linear constraint~\eqref{eq:linearcons} holds:
\begin{align*}
& \beta r(\tau) \lag\BP_{\perp}, \BS\BS^{\top}\rag + \delta \lag \BP_{\perp}\BC, \BP_{\perp}\BS\rag - \beta \Tr(\BSigma_{\BS,\tau})_{\perp}   \\
& = \beta \left( r(\tau) \lag\BP_{\perp}, \BS\BS^{\top}\rag - \Tr(\BSigma_{\BS,\tau})_{\perp}  \right)  + \delta n^{-1} \left\| \BZ^{\top}\PP_{\BS,\tau}(\BZ^{\top}\BS)   \right\|_F \\
& =  \frac{\left(  r(\tau) \lag\BP_{\perp}, \BS\BS^{\top}\rag -\Tr(\BSigma_{\BS,\tau})_{\perp}   \right)\alpha\left( 1 - \cos\theta  \right)  }{2\lag \BP_{\perp}, \BS\BS^{\top}\rag r(\tau)}  +  \frac{n^{-1} \left\| \BZ^{\top}\PP_{\BS,\tau}(\BZ^{\top}\BS)   \right\|_F \alpha\sin\theta}{\lag \BP_{\perp}, \BS\BS^{\top}\rag\sqrt{ r(\tau)}}  \\
& = \left( 1 - \frac{  \Tr(\BSigma_{\BS,\tau})_{\perp}  }{\lag \BP_{\perp}, \BS\BS^{\top}\rag r(\tau)}  \right) \frac{\alpha(1 - \cos\theta)}{2}  + \frac{ n^{-1} \left\| \BZ^{\top}\PP_{\BS,\tau}(\BZ^{\top}\BS)   \right\|_F \alpha\sin\theta}{\lag \BP_{\perp}, \BS\BS^{\top}\rag\sqrt{ r(\tau)}} \geq \alpha - 1.
\end{align*}
Multiplying both sides by $\alpha^{-1}$, we have
\begin{align*}
\alpha^{-1} & \geq  1 - \frac{1}{2}\left( 1 -  \frac{\Tr(\BSigma_{\BS,\tau})_{\perp}  }{\lag \BP_{\perp}, \BS\BS^{\top}\rag r(\tau)}  \right)  \left( 1 - \cos\theta  \right)  -  \frac{ n^{-1} \left\| \BZ^{\top}\PP_{\BS,\tau}(\BZ^{\top}\BS)   \right\|_F}{\lag \BP_{\perp}, \BS\BS^{\top}\rag\sqrt{ r(\tau)}} \sin\theta .
\end{align*}
To maximize $\alpha$, we minimize the right hand side over $\theta$:
\begin{align*}
\alpha^{-1} - 1 & \geq  -\frac{1}{2}\left( 1 -  \frac{\Tr(\BSigma_{\BS,\tau})_{\perp} }{\lag \BP_{\perp}, \BS\BS^{\top}\rag r(\tau)}  \right) \\
& \qquad - \sqrt{ \frac{1}{4} \left( 1 -  \frac{\Tr(\BSigma_{\BS,\tau})_{\perp}}{\lag \BP_{\perp}, \BS\BS^{\top}\rag r(\tau)}  \right)^2 + \frac{n^{-2} \left\| \BZ^{\top}\PP_{\BS,\tau}(\BZ^{\top}\BS)   \right\|_F^2}{\lag \BP_{\perp}, \BS\BS^{\top}\rag^2 r(\tau)}  }
\end{align*}
and the optimal $\theta$ also determines $\beta$ and $\delta$ via~\eqref{def:betadelta}. In other words, the largest $\alpha$ satisfying the inequality above is a lower bound of~\eqref{prog:finaldual} and thus~\eqref{prog:dual2}. 

The inequality above is not easy to use, and we choose to represent the largest feasible $\alpha$ in another form.
Let $x = 1 - \alpha^{-1} \in [0,1)$ since $\alpha\geq 1$, and then $x$ satisfies following quadratic inequality:
\[
x^2 -  \left( 1 -  \frac{\Tr(\BSigma_{\BS,\tau})_{\perp} }{\lag \BP_{\perp}, \BS\BS^{\top}\rag r(\tau)}  \right) x \leq
 \frac{n^{-2} \left\| \BZ^{\top}\PP_{\BS,\tau}(\BZ^{\top}\BS)   \right\|_F^2}{\lag \BP_{\perp}, \BS\BS^{\top}\rag^2 r(\tau)}.
\]
Note that $x > 0$, and then
\begin{align*}
\alpha^{-1} & = 1-x \geq  \frac{\Tr(\BSigma_{\BS,\tau})_{\perp} }{\lag \BP_{\perp}, \BS\BS^{\top}\rag r(\tau)}  -  \frac{ n^{-2} \left\| \BZ^{\top}\PP_{\BS,\tau}(\BZ^{\top}\BS)   \right\|_F^2}{\lag \BP_{\perp}, \BS\BS^{\top}\rag^2 r(\tau)}\cdot  \frac{1}{x} \\
& =: \frac{g(\BS,x,\tau)}{ r(\tau)} =\frac{g(\BS,1-\alpha^{-1},\tau)}{ r(\tau)}.
\end{align*}
Therefore, the largest $\alpha$ satisfying the inequality above provides a lower bound of the SDP~\eqref{prog:finaldual} for any fixed $\BS$, i.e., if 
\[
\frac{\lambda_{\max}(\BL)}{\lambda_{d+1}(\BL)} < \alpha,
\]
then $\BS$ is not a second-order critical point.
To ensure a global benign landscape, we need to find an $\alpha$ such that the inequality above holds for any $\BS\in\St(p,d)^{\otimes n}$, and thus we need to have~\eqref{eq:prog_mx00}. 

For any upper bound $M(x,\tau)$ of $\sup_{\BS\in\St(p,d)^{\otimes n}} g(\BS,\bx)$, any feasible solution to~\eqref{eq:prog_mx} is feasible to~\eqref{eq:prog_mx0}. Therefore, the optimal value to~\eqref{eq:prog_mx} provides a lower bound to~\eqref{eq:prog_mx00}.
\end{proof}

\begin{proof}[\bf Proof of Theorem~\ref{thm:alphaM}]
Based on the Lemma~\ref{lem:alpha_find}, we first provide a loose bound $M(x,\tau)$ and an estimation of $\alpha.$
A simple bound $M(x,\tau)$ on $g(\BS,x,\tau)$ in~\eqref{def:gsx} is given by
\begin{align*}
g(\BS,x,\tau) 
& \leq \frac{ \Tr(\BSigma_{\BS,\tau})_{\perp} }{\lag \BP_{\perp}, \BS\BS^{\top}\rag} \\
\text{Use~\eqref{eq:LLP}}~~~& = \frac{q(\tau) (nd - n^{-1}\|\BS\BS^{\top}\|_F^2)  + \tau^2 (nd^2 - n^{-1}\|\Tr_d(\BS\BS^{\top})\|_F^2) }{ nd - n^{-1}\|\BZ^{\top}\BS\|_F^2 } \\
&  \leq  \frac{q(\tau)+d\tau^2 }{d}\cdot \frac{nd^2 - n^{-1}\|\Tr_d(\BS\BS^{\top})\|_F^2}{nd - n^{-1}\|\BZ^{\top}\BS\|_F^2} \\
& \leq  \frac{q(\tau)+d\tau^2}{d}\cdot \frac{nd^2 - n^{-1}\|\Tr_d(\BS\BS^{\top})\|_F^2}{nd - \|\Tr_d(\BS\BS^{\top})\|_F} \\
& \leq (q(\tau)+d\tau^2) \cdot \frac{nd+\|\Tr_d(\BS\BS^{\top})\|_F}{nd} \leq 2 (q(\tau)+d\tau^2) 
\end{align*}
where
\begin{align*}
\|\Tr_d(\BS\BS^{\top})\|_F^2 & =\sum_{i,j} \lag \BS_i,\BS_j\rag^2 =\sum_{i,j} \lag \BS_i\BS_j^{\top},\I_d\rag^2  \leq d\sum_{i,j} \|\BS_i\BS_j^{\top}\|_F^2 = d\|\BS\BS^{\top}\|_F^2, \\
\|\BZ^{\top}\BS\|_F^2 & = \lag \BZ\BZ^{\top}, \BS\BS^{\top}\rag =\sum_{i,j}\lag \BS_i,\BS_j\rag \leq n \|\Tr_d(\BS\BS^{\top})\|_F \leq n^2 d,
\end{align*}
follow from Cauchy-Schwarz inequality. Therefore, by letting $M(\tau) : = 2(q(\tau) + d\tau^2)$ in~\eqref{eq:prog_mx}, then we have
\[
\alpha^{-1} \geq \frac{M(\tau)}{r(\tau)} =  \frac{2(q(\tau)+d\tau^2)}{r(\tau)} \Longleftrightarrow
\alpha_M(p,d,\tau) := \frac{1}{2}\cdot\frac{r(\tau)}{q(\tau)+d\tau^2}. %= \frac{1}{2}\cdot \left( \frac{p + 4(d-1)\tau^2 - 2(d-1)\tau }{2(d-1)\tau^2 - 2(d-1)\tau + d } \right) - \frac{1}{2}
\] 
%which follows from where $M(x,\tau) \leq 4(d-1)\tau^2 - 4(d-1)\tau + 2d.$ 

Therefore, it suffices to maximize
\[
\alpha_M(p,d) : = \max_{0\leq \tau\leq 1}~~\alpha_M(p,d,\tau) = \frac{1}{2}\cdot\frac{r(\tau)}{q(\tau)+d\tau^2} =  \frac{1}{2}\cdot\frac{\tau^2 + \frac{p- d}{2(d-1)}}{\tau^2 - \tau + \frac{d}{2(d-1)} }
\]
for any given pair of $(p,d).$
Then
\begin{align*}
\alpha'_M(p,d,\tau) & = \frac{2\tau (\tau^2 - \tau + \frac{d}{2(d-1)}) - (2\tau-1) (\tau^2 + \frac{p-d}{2(d-1)})}{ 2(\tau^2 - \tau + \frac{d}{2(d-1)})^2 } = \frac{-\tau^2  + \frac{(2d-p)\tau}{d-1} + \frac{p-d}{2(d-1)} }{2 (\tau^2 - \tau + \frac{d}{2(d-1)})^2 }
\end{align*}
where $\alpha_M'(p,d,\tau)$ is the derivative w.r.t. $\tau.$
It is easy to check that the maximum is attained at the larger root of 
$\alpha_M'(p,d,\tau) = 0$, i.e., 
\[
\tau^2_*  - \frac{(2d-p)\tau_*}{d-1} - \frac{p-d}{2(d-1)}  = 0.
\]
It also holds that
\[
\alpha_M'\left(p,d,\frac{1}{2}\right) > 0,~~~\alpha_M'(p,d,1) \leq 0,
\]
and 
\[
\alpha_M(p,d,\tau_*) = \frac{r(\tau_*)}{2(q(\tau_*) + d\tau_*^2)} = \frac{1}{2}\cdot \frac{r'(\tau_*)}{q'(\tau_*) + 2d\tau_*} = \frac{\tau_*}{2\tau_*-1}.
\]
Therefore, the maximizer $\tau_*$ satisfies
\[
\frac{1}{2}< \tau_*(p,d) = \frac{1}{2}\left( \frac{2d-p}{d-1} + \sqrt{ \frac{(2d-p)^2}{(d-1)^2} + \frac{2(p-d)}{d-1} }\right) \leq 1.
\]

For $p = d+2$, then $\tau_* = 1.$ For $d\geq 2$, we can see that the optimal value $\alpha_M(p,d)$ is increasing w.r.t. $p$ because 
\[
\alpha_M(p,d,\tau) < \alpha_M(p+1,d,\tau),~~\forall 0\leq \tau\leq 1.
\]
Thus 
\[
\alpha_M(p,d) = \max_{0\leq \tau\leq 1}\alpha_M(p,d,\tau)
\] 
is increasing w.r.t. $p\geq d+2.$
\end{proof}

\subsection{A refined estimation of $\alpha$}\label{ss:proof_5}

To obtain a refined estimation of $\alpha$, we first introduce  the projection operator of any $d\times d$ matrix onto $\Od(d)$ as follows:
\begin{equation}
{\cal P}(\BM) \in \argmin_{\BPhi\in \St(p,d)}\| \BPhi - \BM \|_F^2,~~\forall \BM\in\RR^{d\times p}.
\end{equation}
The global minimizer ${\cal P}(\cdot)$ is not unique if $\BM$ is rank-deficient. In most cases, suppose $\BM = \BU\BSigma\BV^{\top}$ is the SVD of $\BM$ where $\BSigma$ is a diagonal matrix, then we simply set ${\cal P}(\BM) = \BU\BV^{\top}$. By definition, we know that
\[
{\cal P}(\BM)\BM^{\top} = \BM^{\top}{\cal P}(\BM) = \BV\BU^{\top} \BU\BSigma\BV^{\top} = \BV\BSigma\BV^{\top}
\succeq 0.
\]

\begin{lemma}[\bf Refined estimation of~\eqref{def:gsx}]\label{lem:thmmain}
Let $\BR^{\top} = {\cal P}(\BZ^{\top}\BS)$ and $\BR^{\top}\in\St(p,d)$.
Define
\[
%t := \frac{nd - \|\BS\BR\|_F^2}{nd -n^{-1} \|\BZ^{\top}\BS\|_F^2},~~~
v := 1 - \frac{\|\BZ^{\top}\BS\|_F^2}{n^2 d},~~~u :=  vt =  1 - \frac{\|\BS\BR\|_F^2}{nd},~~~w := 1 - \frac{1}{nd}\sum_{i=1}^n \frac{\lag \BS_i, \BZ^{\top}\BS\rag^2}{\|\BZ^{\top}\BS\|_F^2}
\]
and they satisfy
\[
0\leq u\leq w\leq v\leq 1.
%0 \leq  v \leq 1,~~~0\leq t = \frac{u}{v} \leq 1,
\]
In particular, if $d=1$, then $u = w.$
For $\tau \in [0,1]$, it holds that
\begin{align*}
%\lag \BP_{\perp}, \BS\BS^{\top}\rag^2  & \geq (n - n^{-1}\sigma^2_{\min}(\BZ^{\top}\BS))^2, \\
% \lag \BP_{\perp}, \BS\BS^{\top}\rag & \leq d (n - n^{-1}\sigma^2_{\min}(\BZ^{\top}\BS)), \\
nd - n^{-1}\|\BS\BS^{\top}\|_F^2 & \leq  nd u \left(2 - \frac{pu}{p-d}\right), \\
nd^2 - n^{-1}\|\Tr_d(\BS\BS^{\top})\|_F^2 & \leq  n d^2w\left(2 - \frac{pd w}{pd-1}\right), \\
n^{-1} \left\| \BZ^{\top}\PP_{\BS,\tau}(\BZ^{\top}\BS)   \right\|_F & \geq \sigma^2_{\min}(\BZ^{\top}\BS)d u^2.
\end{align*}
Then $g(\BS,x,\tau)$ in~\eqref{def:gsx} satisfies
\begin{equation}\label{def:gsx_1}
g(\BS,x,\tau) \leq  g(u,v,w,x,\tau) : = q(\tau) \cdot \frac{ u}{v}\left( 2 - \frac{pu}{p-d}\right) + d\tau^2\cdot\frac{w}{v} \left(2 - \frac{pdw}{pd-1}\right)  - \frac{(1-dv)_+u^2}{x \cdot dv^2}.
\end{equation}
\end{lemma}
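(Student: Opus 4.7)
The plan is to verify the ordering $0\le u\le w\le v\le 1$, prove each of the three displayed estimates, and then plug them into the formula for $g(\BS,x,\tau)$ together with the identities $\lag\BP_\perp,\BS\BS^\top\rag=ndv$ and $\sigma_{\min}^2(\BZ^\top\BS)\ge n^2(1-dv)_+$ (the latter because the $d$ squared singular values of $\BZ^\top\BS$ sum to $n^2d(1-v)$ and each is bounded by $n^2$). The ordering is elementary: $0\le v\le 1$ is immediate, $v\ge w$ is Cauchy--Schwarz applied to the $n$ scalars $\lag\BS_i,\BZ^\top\BS\rag$ whose sum equals $\|\BZ^\top\BS\|_F^2$, and $w\ge u$ comes from the SVD $\BZ^\top\BS=\BU\BSigma\BV^\top$ by writing $\lag\BS_i,\BZ^\top\BS\rag=\sum_k\sigma_k(\BU^\top\BS_i\BV)_{kk}$ and applying Cauchy--Schwarz to obtain $\lag\BS_i,\BZ^\top\BS\rag^2\le\|\BZ^\top\BS\|_F^2\|\BS_i\BR\|_F^2$ (equality when $d=1$, which forces $u=w$).

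Inequalities (a) and (b) are parallel applications of the dimension bound $\|\BM\|_F^2\ge(\Tr\BM)^2/r$ for a PSD $\BM$ supported in an $r$-dimensional subspace. For (a), I orthogonally decompose $\BS^\top\BS$ with respect to the rank-$d$ projection $\BP_R:=\BR\BR^\top$: the on-block $\BP_R\BS^\top\BS\BP_R$ has trace $\|\BS\BR\|_F^2=nd(1-u)$ and lives in dimension $d$, while the off-block $\BP_R^\perp\BS^\top\BS\BP_R^\perp$ has trace $ndu$ in dimension $p-d$. Dropping the nonnegative cross terms gives $\|\BS^\top\BS\|_F^2\ge n^2d(1-u)^2+n^2d^2u^2/(p-d)$, which rearranges to (a) via $\|\BS\BS^\top\|_F^2=\|\BS^\top\BS\|_F^2$. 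For (b), I view $\Tr_d(\BS\BS^\top)\in\RR^{n\times n}$ as the Gram matrix of the vectorizations $\{\mathrm{vec}(\BS_i)\}_{i=1}^n$, so that $\|\Tr_d(\BS\BS^\top)\|_F^2$ equals $\|\sum_i\mathrm{vec}(\BS_i)\mathrm{vec}(\BS_i)^\top\|_F^2$ by the shared nonzero spectrum; decomposing this $pd\times pd$ PSD matrix along the unit vector $\hat\bz:=\mathrm{vec}(\BZ^\top\BS)/\|\BZ^\top\BS\|_F$ yields a rank-$1$ parallel part of trace $\sum_i\lag\BS_i,\BZ^\top\BS\rag^2/\|\BZ^\top\BS\|_F^2=nd(1-w)$ and a perpendicular part in dimension $pd-1$ of trace $ndw$, giving $\|\Tr_d(\BS\BS^\top)\|_F^2\ge n^2d^2(1-w)^2+n^2d^2w^2/(pd-1)$, which is (b).

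Step (c) is the expected main obstacle. Exploiting the polar decomposition $\BZ^\top\BS=\BK\BR^\top$ with $\BK=\BU\BSigma\BU^\top\succeq 0$, one checks $\sum_i\PP_{\BS_i,0}(\BZ^\top\BS)=\BK\BR^\top\BG$ with $\BG:=n\I_p-\BS^\top\BS\succeq 0$, whence
\begin{equation*}
\|\BK\BR^\top\BG\|_F^2=\Tr(\BK^2\BR^\top\BG^2\BR)\ge\sigma_{\min}^2(\BK)\Tr(\BR^\top\BG^2\BR)\ge\sigma_{\min}^2(\BK)\cdot(ndu)^2/d,
\end{equation*}
the last step using $\Tr(\BR^\top\BG^2\BR)=\|\BP_R\BG\|_F^2\ge\|\BR^\top\BG\BR\|_F^2\ge(\Tr\BR^\top\BG\BR)^2/d=(ndu)^2/d$ together with $\Tr(\BR^\top\BG\BR)=ndu$. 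This settles (c) at $\tau=0$. To lift to $\tau\in(0,1]$, I invoke the operator PSD ordering $\PP_{\BS_i,\tau}\succeq\PP_{\BS_i,0}$ established by the eigenvalue comparison $\{0,2\tau,1\}\ge\{0,0,1\}$ in the preceding spectral lemma. Since PSD ordering of self-adjoint operators does not automatically transfer to $\|\mathcal{A}(\BY)\|_F$ monotonicity, the hard part is the additional work of decomposing $\PP_{\BS_i,\tau}=\PP_{\BS_i,0}+\tau\mathcal{Q}_{\BS_i}$ with $\mathcal{Q}_{\BS_i}(\BY):=(\BY\BS_i^\top-\BS_i\BY^\top)\BS_i$ itself PSD (eigenvalues $\{0,2,0\}$), noting that for each $i$ the images of the two summands have row spaces in the mutually orthogonal subspaces $\mathrm{row}(\BS_i)^\perp$ and $\mathrm{row}(\BS_i)$, and exploiting this per-$i$ orthogonality together with the specific form $\BY=\BZ^\top\BS$ to control the $\tau$-linear cross-term in $\|\sum_i\PP_{\BS_i,0}(\BY)+\tau\sum_i\mathcal{Q}_{\BS_i}(\BY)\|_F^2$. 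Combining (a), (b), (c) with $\lag\BP_\perp,\BS\BS^\top\rag=ndv$ and $\sigma_{\min}^2\ge n^2(1-dv)_+$ in the formula for $g(\BS,x,\tau)$ then yields $g(\BS,x,\tau)\le g(u,v,w,x,\tau)$.
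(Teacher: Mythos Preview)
Your argument for the ordering $u\le w\le v$ and for inequalities (a), (b), and the $\tau=0$ half of (c) matches the paper's proof essentially line by line: the same projection decompositions and the same $(\Tr M)^2\le r\,\|M\|_F^2$ rank bound, with only a cosmetic difference in where $\sigma_{\min}$ is factored out in (c).

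Where you diverge from the paper is the reduction from $\tau>0$ to $\tau=0$ in (c). The paper does not carry out your decomposition-and-cross-term program; it simply asserts $\|\BZ^\top\PP_{\BS,\tau}(\BZ^\top\BS)\|_F^2\ge\|\BZ^\top\PP_{\BS,0}(\BZ^\top\BS)\|_F^2$ and cites the operator PSD ordering $\PP_{\BS_i,\tau}\succeq\PP_{\BS_i,0}$. You are correct that PSD ordering of self-adjoint operators does not in general yield this norm comparison, so the paper's one-line justification is at best terse. But your own fix is also incomplete: per-$i$ row-space orthogonality kills only the diagonal pairings $\langle\PP_{\BS_i,0}(\BY),\mathcal{Q}_{\BS_i}(\BY)\rangle$, not the mixed $i\neq j$ terms in $\bigl\langle\sum_i\PP_{\BS_i,0}(\BY),\sum_j\mathcal{Q}_{\BS_j}(\BY)\bigr\rangle$, and you never explain how ``the specific form $\BY=\BZ^\top\BS$'' handles those. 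Since for small $\tau>0$ the sign of precisely this cross-term determines whether $\|\sum_i\PP_{\BS_i,\tau}(\BY)\|_F\ge\|\sum_i\PP_{\BS_i,0}(\BY)\|_F$, this is the substantive gap. To close it you need either a direct computation showing the cross-term is nonnegative for $\BY=\BZ^\top\BS$, or a lower bound on $\|\BZ^\top\PP_{\BS,\tau}(\BZ^\top\BS)\|_F^2$ that does not route through $\tau=0$.
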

\begin{proof}

We estimate each term in $g(\BS, x, \tau)$, i.e.,~\eqref{def:gsx}:
\[
 g(\BS,x,\tau) := \frac{ q(\tau)(nd - n^{-1}\|\BS\BS^{\top}\|_F^2) + \tau^2(nd^2 -n^{-1}\|\Tr_d(\BS\BS^{\top})\|_F^2)}{\lag \BP_{\perp}, \BS\BS^{\top}\rag} - \frac{ \left\|\BZ^{\top}\BS - n^{-1}\BZ^{\top}{\cal L}_{\BS,\tau}( \BZ^{\top}\BS) \right\|_F ^2}{ \lag \BP_{\perp}, \BS\BS^{\top}\rag^2 x }
\]
where
\begin{align*}
\Tr(\BSigma_{\BS,\tau})_{\perp} & = q(\tau)(nd - n^{-1}\|\BS\BS^{\top}\|_F^2) + \tau^2(nd^2 -n^{-1}\|\Tr_d(\BS\BS^{\top})\|_F^2), \\
n^{-2}\|\BZ^{\top}\PP_{\BS,\tau}(\BZ^{\top}\BS)\|_F^2 & = n^{-2}\left\| \sum_{i=1}^n (\BZ^{\top}\BS - {\cal L}_{\BS_i,\tau}(\BZ^{\top}\BS))\right\|_F^2=  \left\|\BZ^{\top}\BS - n^{-1}\BZ^{\top}{\cal L}_{\BS,\tau}( \BZ^{\top}\BS) \right\|_F^2.
\end{align*}
\noindent{\bf Estimation of $nd-n^{-1}\|\BS\BS^{\top}\|_F^2$.} Let $\BR^{\top} = \PP(\BZ^{\top}\BS)$. Note $\BR\BR^{\top}$ is a projection matrix, and it holds that
\begin{align*}
\|\BS\BS^{\top}\|_F^2 & = \lag \BS\BS^{\top}, \BS\BS^{\top}\rag\\
& = \lag \BS\BS^{\top}, \BS\BR\BR^{\top}\BS^{\top}\rag + \lag \BS\BS^{\top}, \BS(\I_p - \BR\BR^{\top})\BS^{\top}\rag \\
& \geq  \lag \BS\BR\BR^{\top}\BS^{\top}, \BS\BR\BR^{\top}\BS^{\top}\rag + \lag \BS(\I_p - \BR\BR^{\top})\BS^{\top}, \BS(\I_p - \BR\BR^{\top})\BS^{\top}\rag \\
& = \| \BS\BR\BR^{\top}\BS^{\top}  \|_F^2 + \|\BS(\I_p - \BR\BR^{\top})\BS^{\top}\|_F^2 \\
& \geq \| \BS\BR\BR^{\top}\BS^{\top}  \|_F^2 + \frac{ (nd - \|\BS\BR\|_F^2)^2 }{p-d} \\
& \geq \frac{\|\BS\BR\|_F^4}{d}  + \frac{ (nd - \|\BS\BR\|_F^2)^2 }{p-d} 
\end{align*}
where $\|\BZ\|_F = \|\BS\|_F = \sqrt{nd}$ and the fifth/sixth lines follow from~\eqref{def:Xnorm},
\begin{equation}\label{def:Xnorm}
[\Tr(\BX)]^2 \leq \rank(\BX)\cdot \|\BX\|_F^2
\end{equation}
for any symmetric matrix $\BX.$
Moreover, 
\[
\Tr(\BS(\I_p - \BR\BR^{\top})\BS^{\top}) = \|\BS\|_F^2 - \|\BS\BR\|_F^2 = nd - \|\BS\BR\|_F^2,
\]
and  $\rank(\BS\BR)\leq d$ and $\rank(\BS(\I_p-\BR\BR^{\top})\BS^{\top})\leq p-d$ because of $\BZ^{\top}\BS(\I_p - \BR\BR^{\top}) = 0$.

Thus we have
\begin{equation}\label{eq:est2}
\begin{aligned}
nd - n^{-1}\|\BS\BS^{\top}\|_F^2 & \leq nd - \frac{\|\BS\BR\|_F^4}{nd}  - \frac{ (nd - \|\BS\BR\|_F^2)^2 }{n(p-d)} \\
& = \frac{(nd + \|\BS\BR\|_F^2)(nd - \|\BS\BR\|_F^2)}{nd} - \frac{ (nd - \|\BS\BR\|_F^2)^2 }{n(p-d)} \\
& = (nd - \|\BS\BR\|_F^2)\left( 1 + \frac{\|\BS\BR\|_F^2}{nd} - \frac{nd-\|\BS\BR\|_F^2}{n(p-d)}  \right) \\
& = nd u \left( 2 - u - \frac{du}{p-d} \right) = nd u \left( 2 - \frac{pu}{p-d} \right).
\end{aligned}
\end{equation}

\noindent{\bf Estimation of $nd^2-n^{-1}\|\Tr_d(\BS\BS^{\top})\|_F^2$. }Let $\widetilde{\BR} = \BZ^{\top}\BS/\|\BZ^{\top}\BS\|_F\in\RR^{d\times p}$, and $\widetilde{\br} = \VEC(\widetilde{\BR} )\in\RR^{pd}$ and 
\[
\widetilde{\BS} = 
\begin{bmatrix}
\bs_1^{\top} \\
\vdots \\
\bs_n^{\top}
\end{bmatrix}\in\RR^{n\times pd},~~~\bs_i = \VEC(\BS_i).
\]
Then
\begin{align*}
\|\Tr_d(\BS\BS^{\top})\|_F^2 & = \|\widetilde{\BS}\widetilde{\BS}^{\top}\|_F^2  \geq \|\widetilde{\BS}\widetilde{\br}\widetilde{\br}^{\top}\widetilde{\BS}^{\top}\|_F^2 + \|\widetilde{\BS}(\I_{pd} - \widetilde{\br}\widetilde{\br}^{\top})\widetilde{\BS}^{\top}\|_F^2 \\
& \geq \|\widetilde{\BS}\widetilde{\br}\|_F^4 + \frac{[\Tr(\widetilde{\BS}(\I_{pd} - \widetilde{\br}\widetilde{\br}^{\top})\widetilde{\BS}^{\top})]^2}{pd-1} \\
& = \|\widetilde{\BS}\widetilde{\br}\|_F^4 + \frac{(nd - \|\widetilde{\BS}\widetilde{\br}\|_F^2)^2}{pd-1}
\end{align*}
where  $\widetilde{\br}\widetilde{\br}^{\top}$ is a projection matrix and $\widetilde{\BS}(\I_{pd} - \widetilde{\br}\widetilde{\br}^{\top})\widetilde{\BS}^{\top}$ is of rank at most $pd-1$ because $\bone_n^{\top}\widetilde{\BS}(\I_{pd} - \widetilde{\br}\widetilde{\br}^{\top}) = 0$.

Note that
\begin{align*}
& [\widetilde{\BS}\widetilde{\br}]_i = \lag \widetilde{\bs}_i, \widetilde{\br}\rag = \frac{\lag \VEC(\BS_i), \VEC(\BZ^{\top}\BS)\rag}{\|\BZ^{\top}\BS\|_F} = \frac{\lag \BS_i, \BZ^{\top}\BS\rag}{\|\BZ^{\top}\BS\|_F} \\
& \Longrightarrow \|\widetilde{\BS}\widetilde{\br}\|_F^2 = \frac{\sum_{i=1}^n\lag \BS_i, \BZ^{\top}\BS\rag^2}{\|\BZ^{\top}\BS\|_F^2} = nd (1-w)
\end{align*}
and thus
\[
\|\Tr_d(\BS\BS^{\top})\|_F^2  \geq n^2 d^2 (1-w)^2 + \frac{n^2d^2w^2}{pd-1}.
\]
Therefore, we have
\begin{equation}\label{eq:est3}
nd^2 - n^{-1}\|\Tr_d(\BS\BS^{\top})\|_F^2 \leq nd^2\left( 1 - (1-w)^2 - \frac{w^2}{pd-1}\right) = nd^2w \left(2 - \frac{pd w}{pd-1}\right).
\end{equation}
Here we have $u\leq w$ because
\[
\sum_{i=1}^n \lag \BS_i, \BZ^{\top}\BS\rag^2 = \sum_{i=1}^n \lag \BS_i\BR, \BZ^{\top}\BS\BR\rag^2 \leq \|\BS\BR\|_F^2 \|\BZ^{\top}\BS\|_F^2
\]
and thus it leads to $nd (1-w) \leq nd (1-u)$, i.e., $w\geq u.$

\noindent{\bf Estimation of $\left\|\BZ^{\top}\BS - n^{-1}\BZ^{\top}{\cal L}_{\BS,\tau}(\BZ^{\top}\BS)\right\|^2_F$.}

For any $\tau\geq 0$, we have
\begin{align*}
n^{-2}\left\| \BZ^{\top}\PP_{\BS,\tau}(\BZ^{\top}\BS) \right\|_F^2 
& \geq n^{-2}\| \BZ^{\top}\PP_{\BS,0}(\BZ^{\top}\BS) \|_F^2 \\
& = n^{-2}\left\| \sum_{i=1}^n \BZ^{\top}\BS (\I_p - \BS_i^{\top}\BS_i) \right\|_F^2 \\
& = \|\BZ^{\top}\BS(\I_p - n^{-1}\BS^{\top}\BS)\|_F^2 \\
& \geq \|\BZ^{\top}\BS\BR(\I_d - n^{-1}\BR^{\top}\BS^{\top}\BS\BR)\|_F^2 \\
& \geq \sigma_{\min}^2(\BZ^{\top}\BS) \|\I_d - n^{-1}\BR^{\top}\BS^{\top}\BS\BR\|_F^2 \\
& \geq \frac{\sigma^2_{\min}(\BZ^{\top}\BS)}{d} \cdot [\Tr(\I_d - n^{-1}\BR^{\top}\BS^{\top}\BS\BR)]^2 \\
& = \frac{\sigma^2_{\min}(\BZ^{\top}\BS)}{d} \cdot (d - n^{-1}\|\BS\BR\|_F^2)^2
\end{align*}
where the second line follows from~\eqref{eq:pp_sdp} and $\BZ^{\top}\BS\BR\BR^{\top} = \BZ^{\top}\BS.$

By using the definition of $u$ and $n^2 - \sigma^2_{\min}(\BZ^{\top}\BS) \leq n^2d - \|\BZ^{\top}\BS\|_F^2$, it holds that 
\begin{equation}\label{eq:est1}
\begin{aligned}
n^{-2}\left\| \BZ^{\top}\PP_{\BS,\tau}(\BZ^{\top}\BS) \right\|_F^2  
& \geq \sigma^2_{\min}(\BZ^{\top}\BS) du^2 \\
& \geq ( n^2 - (n^2d - \|\BZ^{\top}\BS\|_F^2) )_+du^2 \\
& = n^2 (1 - d v)_+ d u^2.
\end{aligned}
\end{equation}

Using~\eqref{eq:est2}-\eqref{eq:est1}, and plugging them into~\eqref{def:gsx}, we have
\begin{align*}
g(\BS,x,\tau) & \leq q(\tau) \frac{ nd - n^{-1}\|\BS\BS^{\top}\|_F^2}{nd - n^{-1}\|\BZ^{\top}\BS\|_F^2}  + \tau^2 \frac{nd^2 - n^{-1}\|\Tr_d(\BS\BS^{\top})\|_F^2}{nd - n^{-1}\|\BZ^{\top}\BS\|_F^2}  - \frac{n^{-2}\| \BZ^{\top}\PP_{\BS,\tau}(\BZ^{\top}\BS) \|_F^2}{ (nd - n^{-1}\|\BZ^{\top}\BS\|_F^2)^2 x} \\
& =  q(\tau) \cdot \frac{ nd u}{nd v}\left( 2 - \frac{pu}{p-d}\right) + \tau^2 \frac{nd^2}{nd v}\left(2w - \frac{pdw^2}{pd-1}\right)  - \frac{n^2 (1-dv)_+du^2}{x \cdot n^2d^2v^2} \\
& =  q(\tau) \cdot \frac{ u}{v}\left( 2 - \frac{pu}{p-d}\right) + d\tau^2\cdot\frac{w}{v} \left(2 - \frac{pdw}{pd-1}\right)  - \frac{(1-dv)_+u^2}{x \cdot dv^2}. %\\
%& = q(\tau) t \left( 2 - \frac{ptv}{p-d}\right) + d\tau^2 \left(2 - v - \frac{v}{pd}\right)  - \frac{(1-dv)_+t^2}{xd} = : g(t,v,x,\tau)
\end{align*}
\end{proof}

\begin{proof}[\bf Proof of Theorem~\ref{thm:main}]
Based on Lemma~\ref{lem:thmmain}, we know that
\[
g(\BS,x,\tau) \leq g(u,v,w,x,\tau)
\]
where $g(u,v,w,x,\tau)$ is given in~\eqref{def:gsx_1}. 
Let 
\begin{align*}
G(x,\tau) & : = \sup_{0\leq u\leq w\leq v\leq 1} g(u,v,w,x,\tau) \\
& =\sup_{0\leq u\leq w\leq v\leq 1}\left\{ q(\tau) \cdot\frac{u}{v} \left( 2 - \frac{pu}{p-d}\right) + d\tau^2 \cdot\frac{w}{v}\left(2 -\frac{pdw}{pd-1} \right)  - \frac{1}{x}\cdot \frac{(1-dv)_+u^2}{dv^2}\right\}.
\end{align*}
Then by applying Lemma~\ref{lem:alpha_find}, we need to find the largest $\alpha_G(p,d,\tau)$ such that
\[
r(\tau)\alpha^{-1} \geq G(1-\alpha^{-1},\tau).
\]
Then we obtain the  optimal value of $\alpha_G(p,d)$ via $\max_{0\leq \tau\leq 1}\alpha_G(p,d,\tau)$, which finishes Theorem~\ref{thm:main}.
\end{proof}

\begin{proof}[\bf Proof of Proposition~\ref{prop:gfun}: the exact form of $G(x,\tau)$]
Consider
\begin{align*}
G(x,\tau) & := \sup_{0\leq u\leq w\leq v\leq 1}\left\{ q(\tau) \cdot \frac{u}{v} \left( 2 - \frac{pu}{p-d}\right) + d\tau^2 \cdot\frac{w}{v}\left(2 - \frac{pdw}{pd-1}\right)  - \frac{(1-dv)_+u^2}{xdv^2}\right\}.
\end{align*}
where $0\leq u\leq w\leq v\leq 1.$ 
Define
\[
s := \frac{w}{v},~~~t := \frac{u}{v},~~~0\leq t\leq s\leq 1,
\]
and
\[
G(x,\tau)  := \sup_{0\leq t\leq s\leq 1,0\leq v\leq 1}\left\{ q(\tau) t \left( 2 - \frac{p tv }{p-d}\right) + d\tau^2 s\left(2 - \frac{pdsv}{pd-1}\right)  - \frac{(1-dv)_+ t^2}{xd}\right\}.
\]

For $v\geq 1/d$, we have $(1-dv)_+ = 0$. 
Then
\begin{align}
& \sup_{0\leq t\leq s\leq 1, v\geq 1/d}  \left\{ q(\tau) t \left( 2 - \frac{p tv }{p-d}\right) + d\tau^2 s\left(2 - \frac{pdsv}{pd-1}\right)  - \frac{(1-dv)_+ t^2}{xd}\right\} \nonumber  \\
& = \sup_{0\leq t\leq s\leq 1,v= 1/d}  \left\{ q(\tau) t \left( 2 - \frac{p t }{d(p-d)}\right) + d\tau^2 s\left(2 - \frac{ps}{pd-1}\right) \right\} \nonumber \\
& = \begin{cases}
\dfrac{p-1}{p}, & p\geq 2,~d = 1,\\
q(\tau)\dfrac{d}{d+1} + d\tau^2 \left(2 - \dfrac{p}{pd-1}\right), & p = d+1,~d\geq 2, \\
q(\tau) \left( 2 - \dfrac{p }{d(p-d)}\right) + d\tau^2 \left(2 - \dfrac{p}{pd-1}\right) , & p\geq d+2,~d \geq 2,
\end{cases} \nonumber  \\
& = \begin{cases}
\dfrac{p-1}{p}, & p\geq 2,~d = 1,\\
2(q(\tau)+d\tau^2) - h(\tau)q(\tau), & p \geq d+1,~d\geq 2,
\end{cases} \label{eq:gv1}
\end{align}
where $q(\tau)+d\tau^2 = 2(d-1)\tau^2-2(d-1)\tau+d$ and $h(\cdot)$ is defined in~\eqref{def:hfun}.

For $v\leq 1/d$, then $(1-dv)_+ = 1-dv$ and 
\begin{align*}
& \sup_{0\leq t\leq s\leq 1, v\leq 1/d}\left\{ q(\tau) t \left( 2 - \frac{p tv }{p-d}\right) + d\tau^2 s\left(2 - \frac{pdsv}{pd-1}\right)  - \frac{(1-dv) t^2}{xd}\right\} \\
& = \sup_{0\leq t\leq s\leq 1, v=0}\left\{ 2q(\tau) t  + 2d\tau^2 s  - \frac{t^2}{xd}\right\} \vee \sup_{0\leq t\leq s\leq 1,v=1/d}  \left\{ q(\tau) t \left( 2 - \frac{p t }{d(p-d)}\right) + d\tau^2 s\left(2 - \frac{ps}{pd-1}\right) \right\}.
\end{align*}
Therefore, the equation above is equivalent to $G(x,\tau)$, and the supremum over $0\leq t\leq s\leq 1$ and $v=1/d$ is 
given in~\eqref{eq:gv1}. Here
\begin{equation}\label{eq:gv2}
\sup_{0\leq t\leq s\leq 1, v=0}\left\{ 2q(\tau) t  + 2d\tau^2 s  - \frac{t^2}{xd}\right\} = 
\begin{cases}
2q(\tau) + 2d\tau^2 - \dfrac{1}{xd}, & xdq(\tau) \geq 1, \\
xdq^2(\tau) + 2d\tau^2, & 0\leq xdq(\tau) \leq 1.
\end{cases}
\end{equation}
We compute $G(x,\tau)$ based on the cases discussed in~\eqref{eq:gv1}.

\noindent{\bf Case 1:} For $d=1$, then 
\[
\sup_{0\leq t\leq s\leq 1, v=0}\left\{ 2q(\tau) t  + 2d\tau^2 s  - \frac{t^2}{xd}\right\} = 
x (1-\tau^2)^2 + 2\tau^2, ~~\forall 0\leq x \leq 1,
\]
where $q(\tau) = 1-\tau^2\leq 1$ for $d=1$ and $xq(\tau)\in [0,1]$. Then
\[
G(x,\tau) = \{x (1-\tau^2)^2 + 2\tau^2\}\vee \frac{p-1}{p}.
\]

\noindent{\bf Case 2:} For $p \geq d+1$ and $d\geq 2$, then we compare~\eqref{eq:gv1} with~\eqref{eq:gv2} and determine which one is larger. Note that under $xdq(\tau)\geq 1$, it holds
\begin{align*}
 2q(\tau) + 2d\tau^2 - \dfrac{1}{xd} \geq 2(q(\tau)+d\tau^2) - h(\tau)q(\tau) & \Longleftrightarrow  h(\tau)  \geq \dfrac{1}{xdq(\tau)}
\end{align*}
where $h(\tau)$ is defined in~\eqref{def:hfun}
and
\begin{align*}
& xdq^2(\tau) + 2d\tau^2 \geq 2( q(\tau)  + d\tau^2) - h(\tau)q(\tau),~xdq(\tau) \leq 1  \Longleftrightarrow 1\geq xdq(\tau) \geq 2 - h(\tau).
\end{align*}
Then
\[
G(x,\tau) = 
\begin{cases}
2q(\tau) + 2d\tau^2 - \dfrac{1}{xd}, & xdq(\tau) \geq \max\left\{1,\dfrac{1}{h(\tau)} \right\} \\
xdq^2(\tau) + 2d\tau^2, & 1\geq xdq(\tau)  \geq  2 - h(\tau), \\
2q(\tau) + 2d\tau^2 - h(\tau)q(\tau), & {\rm otherwise}.
\end{cases}
\]
\end{proof}

\begin{proof}[\bf Proof of Theorem~\ref{thm:main2}]
\noindent{\bf Part I: exact computation of $\alpha_G(p,d,\tau)$.}
For $d = 1$, we have provide the proof right after Theorem~\ref{thm:main2} in Section~\ref{s:main}. Then we proceed with $d\geq 2.$
For $d\geq 2$, we use the form of $G(x,\tau)$ in~\eqref{def:Gfun}. We investigate~\eqref{def:Gfun} based on whether $h(\tau)\geq 1$ or not.
For $p \geq d +1$ with $\tau$ satisfying $h(\tau) \geq 1$ (in particular, for $p = d +1$, then $h(\tau) \geq 1$ holds for any $\tau$), 
then
\begin{equation}%\label{def:Gfun1}
G(x,\tau) = 
\begin{dcases}
2(q(\tau) + d\tau^2) - \frac{1}{xd}, & x \geq \frac{1}{dq(\tau)}, \\
xdq^2(\tau)  + 2d\tau^2, & \frac{2-h(\tau)}{dq(\tau)}\leq x \leq \frac{1}{dq(\tau)}, \\
2(q(\tau) + d\tau^2) - q(\tau) h(\tau), & x \leq \frac{2-h(\tau)}{dq(\tau)}.
\end{dcases} 
\end{equation}
For $p \geq d+2$ with $\tau$ satisfying
$h(\tau) \leq 1,$
then
\begin{equation}%\label{def:Gfun}
G(x,\tau) = 
\begin{dcases}
2(q(\tau) + d\tau^2) - \frac{1}{xd}, & x \geq \frac{1}{dq(\tau)h(\tau)}, \\
2(q(\tau) + d\tau^2) - q(\tau)h(\tau), & x \leq \frac{1}{dq(\tau)h(\tau)}.
\end{dcases}
\end{equation}

It suffices to show $h(\tau) \geq 1$ and the case with $h(\tau) \leq 1$ is the same.
The argument directly follows from finding the largest $\alpha$ satisfying the following condition in each region:
\[
r(\tau) \alpha^{-1} \geq G(1-\alpha^{-1}, \tau) \Longleftrightarrow r(\tau) (1-x)\geq G(x, \tau)
\]
where $G(x,\tau)$ is increasing in $x$ and $x= 1-\alpha^{-1}$. Since $G(1-\alpha^{-1},\tau)$ is a piece-wise and increasing function in $\alpha$, the strategy is to search the largest feasible $\alpha$ in each piece. Based on this, we can see that if $\alpha_c > 0$ is a feasible solution satisfying $r(\tau) \alpha_c^{-1}\geq G(1-\alpha_c^{-1}, \tau)$, then any $\alpha\leq \alpha_c$ is feasible.

\noindent{\bf Case 1:} 
Note that
it holds
\[
G(x,\tau) = g(1,0,x,\tau) = 2(q(\tau)+d\tau^2) - \frac{1}{xd},~~~\forall x \geq \frac{1}{dq(\tau)}.
\]
Then
\begin{align*}
r(\tau) \alpha^{-1} \geq G(1-\alpha^{-1}, \tau) 
& \Longleftrightarrow r(\tau) \alpha^{-1} \geq 2(q(\tau)+d\tau^2) - \frac{1}{(1-\alpha^{-1})d} \\
&  \Longleftrightarrow  r(\tau) (\alpha - 1) \geq 2(q(\tau)+d\tau^2) \alpha (\alpha-1) - d^{-1}\alpha^2 \\
& \Longleftrightarrow (2 (q(\tau)+d\tau^2) - d^{-1}) \alpha^2 - (2(q(\tau)+d\tau^2) + r(\tau))\alpha  + r(\tau) \leq 0.% \\
%& \Longleftrightarrow \alpha \leq \frac{2(q(\tau)+d\tau^2) + r(\tau) + \sqrt{(2(q(\tau)+d\tau^2) + r(\tau))^2 - 4 (2 (q(\tau)+d\tau^2) - d^{-1}) r(\tau) }}{2 (2(q(\tau)+d\tau^2) - d^{-1}) }
\end{align*}
Thus $\alpha$ is the larger root of the quadratic equation that satisfies $(1-\alpha^{-1})dq(\tau) > 1.$ If such a root does not satisfy $(1-\alpha^{-1})dq(\tau) > 1$, it means a feasible $\alpha$ does not exist in this region.

\noindent{\bf Case 2:}
For 
\[
G(x, \tau) = xdq^2(\tau)  + 2d\tau^2,~~\forall \frac{2 - h(\tau)}{dq(\tau)} \leq x  \leq  \frac{1}{dq(\tau) },
\]
we have
\begin{align*}
r(\tau)\alpha^{-1}  \geq G(1-\alpha^{-1}, \tau) 
& \Longleftrightarrow r(\tau) \alpha^{-1}\geq dq^2(\tau)(1-\alpha^{-1})  + 2d\tau^2 \\
& \Longleftrightarrow  (dq^2(\tau) + 2d\tau^2)\alpha \leq dq^2(\tau) + r(\tau)  \Longleftrightarrow \alpha \leq  \frac{ dq^2(\tau) + r(\tau) }{ dq^2(\tau) + 2d\tau^2}.
\end{align*}

\noindent{\bf Case 3:} 
For
\[
G(x,\tau) = 2(q(\tau) + d\tau^2) - q(\tau) h(\tau),~~\forall x \leq \frac{2-h(\tau)}{dq(\tau)},
\]
we have
\begin{align*}
r(\tau) \alpha^{-1} \geq M(1-\alpha^{-1},\tau) & \Longleftrightarrow r(\tau) \alpha^{-1} \geq  2 (q(\tau)+d\tau^2) - q(\tau)h(\tau) \\
& \Longleftrightarrow \alpha \leq \frac{r(\tau)}{2 (q(\tau)+d\tau^2) - q(\tau)h(\tau) }.
\end{align*}
Combining them gives the expression of $\alpha_G(p,d).$

\noindent{\bf Part II: Proof of $\alpha_G(p,d,\tau) > 1$ for $p\geq d+2$ for $d\geq 2$.}
Next, we will show that
\[
\alpha_G(p,d): = \sup_{0\leq \tau\leq 1} \{\alpha:  r(\tau)\alpha^{-1} \geq G(1-\alpha^{-1},\tau) \} > 1
\]
for any $p\geq d+2$. 
First, note that for $\tau_{\eps} = 1-\eps$ with sufficiently small $\eps>0$, then
\[
\frac{1}{dq(\tau_{\eps})}  > 1,~~~\frac{2-h(\tau_{\eps})}{dq(\tau_{\eps})} < 0
\] 
which corresponds to~\eqref{eq:alphaG2:2}.
With this choice of $\tau_{\eps}$, $G(x,\tau_{\eps}) = xdq^2(\tau_{\eps}) + 2d\tau_{\eps}^2$ and 
\begin{align*}
r(\tau_{\eps})\tau^{-1}\geq G(1-\alpha^{-1},\tau_{\eps}) &\Longleftrightarrow (1-\alpha^{-1})dq^2(\tau_{\eps}) + 2d\tau_{\eps}^2 \leq r(\tau_{\eps}) \alpha^{-1} \\
& \Longleftrightarrow \alpha_G(p,d,\tau_{\eps}) = \frac{r(\tau_{\eps}) + dq^2(\tau_{\eps})}{2d\tau_{\eps}^2+ dq^2(\tau_{\eps})}.
\end{align*}
Therefore, to make $\alpha_G(p,d,\tau_{\eps})>1$, it suffices to 
have $r(\tau_{\eps}) > 2d\tau_{\eps}^2$, i.e.,
\[
r(\tau_{\eps}) > 2d\tau_{\eps}^2 \Longleftrightarrow p > d+ 2\tau_{\eps}^2.
\]
Note that $\tau_{\eps}^2<1$, and then $p \geq d+2$ guarantees $\alpha_G(p,d) >\alpha_G(p,d,\tau_{\eps}) > 1$.

\vskip0.25cm

\noindent{\bf Part III: Proof of $\alpha_G(p,d,\tau) > 1$ for $p=d+1$ for $d=2$ or $3$.}

For $p = d+1$, then $h(\tau) > 1$, and it is easier to check the feasibility of $x>0$.  Once we have a feasible $x$, we can compute $\alpha$ via $\alpha= 1- x^{-1}.$
 To check the feasibility of $0 < x< 1$ that satisfies $r(\tau)(1-x)\geq G(x,\tau)$, it suffices to only look into the third case of~\eqref{def:Gfun1}, i.e., 
 \[
 G(x,\tau) = 2(q(\tau)+d\tau^2) - q(\tau)h(\tau),~~~x \leq \frac{2-h(\tau)}{dq(\tau)},~~h(\tau) = 2 - \frac{d}{d+1} - \frac{\tau^2}{q(\tau)}\left(2 -\frac{pd}{pd-1}\right). 
 \]

 The existence of $x\in [0,1]$ satisfying the inequalities above means
\begin{align*}
r(\tau) \geq G(x,\tau)&\Longleftrightarrow  2(d-1)\tau^2+1 \geq  \frac{d}{d+1}\cdot q(\tau)   + d\tau^2 \left(2 - \frac{p}{pd-1} \right), \\
2-h(\tau) \geq 0 & \Longleftrightarrow  \dfrac{pd}{pd-1}\cdot \dfrac{\tau^2}{q(\tau)} \leq \frac{d}{d+1},
\end{align*}
%\[
%1 \geq \alpha^{-1} \geq \max \left\{  \frac{q(\tau) \cdot \frac{d}{d+1}  + \tau^2  \left(2 - \frac{p}{pd-1} \right)  }{  2(d-1)\tau^2 + 1}, 1 - \frac{1}{dq(\tau)} \left( \frac{d}{d+1} - \frac{\tau^2}{q(\tau)} \right) \right\}
%\]
where $r(\tau) = 2(d-1)\tau^2 + 1.$
Then a feasible $\alpha_G(p,d,\tau)\geq 1$ exists for some $\tau$ if there exists $\tau\in [0,1]$ such that
\begin{align*}
\frac{pd}{pd-1}\tau^2 \leq q(\tau) \cdot \frac{d}{d+1}    \leq 1 -  \frac{pd-2}{pd-1}\tau^2
\end{align*}
where $q(\tau) = (d-2)\tau^2 - 2(d-1)\tau + d$ and $p=d+1.$
It is straightforward to show that the inequality above has a feasible solution if $d = 2$ and $d=3$. %Therefore, for $d=2$ and $p = 3$, the landscape is benign if $\alpha\leq 18/17.$ For $d=3$ and $p=4$, then $\tau = 0.68$ with $\alpha \approx 1.0081.$
\end{proof}

\begin{proof}[\bf Proof of Theorem~\ref{thm:main2-sim2}]
For $d\geq 2$, we know from~\eqref{def:Gfun} that
\[
G(x,\tau) = 2q(\tau) + 2d\tau^2 - \frac{1}{dx}  \leq 2(q(\tau)+d\tau^2) - \frac{2-x}{d} = : \widetilde{M}(x,\tau),~~\forall xdq(\tau)\geq \max\{1, 1/h(\tau)\}
\]
where $x + x^{-1}\geq 2$.
Then we solve for $\alpha$ based on this simpler upper bound $\widetilde{M}(x,\tau)$ by
\[
r(\tau) \alpha^{-1} \geq \widetilde{M}(x,\tau) =  2(q(\tau)+d\tau^2) - \frac{2-x}{d} = 2(q(\tau)+d\tau^2) - d^{-1} - d^{-1}\alpha^{-1}
\]
and
\[
\alpha_{\widetilde{M}}(p,d,\tau) = \frac{r(\tau) + d^{-1}}{2(q(\tau)+d\tau^2) - d^{-1}}. %\leq \alpha_G(p,d).
\] 
Next, we will show that $\max_{0\leq \tau\leq 1}\alpha_{\widetilde{M}}(p,d,\tau)$ satisfies the constraint in of~\eqref{eq:alphaG2:1} or~\eqref{eq:alphaG3:1}.
We first find the maximizer of the estimation $\alpha_{\widetilde{M}}(p,d,\tau)$ which provides the best lower bound:
\begin{align*}
\alpha_{\widetilde{M}}(p,d,\tau) & :=  \frac{1}{2}\cdot \frac{2(d-1)\tau^2 + p -d + d^{-1}}{ 2(d-1)\tau^2 - 2(d-1)\tau + d - d^{-1}/2 } = \frac{1}{2}\cdot  \frac{\tau^2 + \frac{p-d+d^{-1}}{2(d-1)}}{ \tau^2 - \tau + \frac{2d-d^{-1}}{4(d-1)} }.
%& =\frac{1}{2} \frac{2(d-1)\tau^2 + p - d + d^{-1}}{2(d-1)\tau^2 - 2(d-1)\tau + d - d^{-1}/2} \\
%& = \frac{1}{2}\frac{\tau^2 + \frac{p-d+d^{-1}}{2(d-1)}}{ \tau^2 - \tau + \frac{2d-d^{-1}}{4(d-1)} } \\
%& = \frac{1}{2}\cdot \frac{r(\tau) + d^{-1}}{ 2(q(\tau) + d\tau^2) - d^{-1} } = \frac{ 4(d-1)\tau }{ 2\cdot (4(d-1)\tau - 2(d-1) ) } \\
%& = \frac{ \tau }{ 2\tau - 1 } 
\end{align*}
In fact, similar to Theorem~\ref{thm:alphaM}, one can check that $\alpha_{\widetilde{M}}(p,d,\tau)$ has one maximizer in the interval $(0,1)$.
Then we aim to find the optimal value.
The maximizer satisfies:
\[
\frac{\tau^2 + \frac{p-d+d^{-1}}{2(d-1)}}{ \tau^2 - \tau + \frac{2d-d^{-1}}{4(d-1)} } = \frac{2\tau}{2\tau - 1}
\]
which equals
\begin{align*}
%& 2\tau\left(\tau^2 - \tau + \frac{2d-d^{-1}}{4(d-1)} \right) = \tau^2(2\tau-1) +  \frac{p-d+d^{-1}}{2(d-1)}\cdot  (2\tau-1)  \\
%& \Longleftrightarrow  - \tau^2 +  \frac{2d-d^{-1}}{2(d-1)} \tau =   \frac{p-d+d^{-1}}{2(d-1)}\cdot (2\tau-1) \Longleftrightarrow  
\tau^2 - \frac{4d-2p-3d^{-1}}{2(d-1)} \tau- \frac{p-d+d^{-1}}{2(d-1)} = 0.
\end{align*}
Let $\tau_*$ be the positive root of the quadratic equation. Then 
\[
\alpha_{\widetilde{M}}(p,d) : = \max_{0\leq \tau\leq 1}\alpha_{\widetilde{M}}(p,d,\tau)= \frac{\tau_*}{2\tau_*-1}
\]
is a lower bound of $\sup_{0\leq \tau\leq 1}\alpha_G(p,d,\tau)$ provided that the constraint in~\eqref{eq:alphaG2:1} or~\eqref{eq:alphaG3:1} holds, i.e.,
\begin{equation}\label{eq:condeasy}
dq(\tau_*)(1 - \alpha_{\widetilde{M}}(p,d)^{-1}) = dq(\tau_*)\cdot \frac{1-\tau_*}{\tau_*}\geq \max\{  h(\tau_*),1 \}.
\end{equation}

Let 
\[
\tau_c = \frac{\sqrt{d}}{\sqrt{d} + 1}.
\]
We will show that 
\begin{equation}\label{eq:condeasy1}
\tau_* \leq \frac{\sqrt{d}}{\sqrt{d} + 1} =:\tau_c,~~~~dq(\tau_*)\cdot \frac{1-\tau_*}{\tau_*}\geq \max\{  h(\tau_*),1 \}
\end{equation}
holds for
\begin{equation}\label{eq:condp}
p\geq \left(1 + \frac{2\sqrt{d}}{d-1}\right)d + 2.
\end{equation}

We start with proving the first inequality in~\eqref{eq:condeasy1}.
The root $\tau_*$ satisfies $\tau_*\leq \tau$ for some $\tau \in (1/2,1)$ if
\[
\tau^2 - \frac{(4d-2p-3d^{-1})\tau}{2(d-1)} - \frac{p-d+d^{-1}}{2(d-1)}  \geq 0
\]
which is
\begin{equation}\label{eq:pest}
p  \geq \frac{-2(d-1)\tau^2 + (4d-3d^{-1})\tau -d+d^{-1}}{2\tau-1} 
%& = \frac{-2\tau^2 + 4\tau-1}{2\tau-1}d + \frac{2\tau^2 - 3d^{-1}\tau + d^{-1}}{2\tau-1}  
 =  \left( 1 + \frac{2\tau(1-\tau)}{2\tau-1}\right)d + \frac{2\tau^2 - 3d^{-1}\tau + d^{-1}}{2\tau-1}.
\end{equation}

For this choice of $\tau = \tau_c$, plugging it into~\eqref{eq:pest} gives
\begin{align*}
& \frac{2\tau(1-\tau)}{2\tau-1} = \frac{2\sqrt{d} \cdot (\sqrt{d} + 1)^{-1} }{ \sqrt{d} - 1 } = \frac{2\sqrt{d}}{d-1}, \\
& \frac{2\tau^2 - 3d^{-1}\tau + d^{-1}}{2\tau-1} = \frac{\frac{2d}{(\sqrt{d}+1)^2} - \frac{3\sqrt{d}}{d (\sqrt{d}+1)} + \frac{1}{d}}{ \frac{2\sqrt{d}}{\sqrt{d}+1}  - 1} =2 -  \frac{ \sqrt{d} - 1}{  d(d-1)}  < 2, 
\end{align*}
and then $\tau_*\in [1/2, \sqrt{d}/(\sqrt{d}+1)]$ is guaranteed by~\eqref{eq:condp}.

For the second inequality in~\eqref{eq:condeasy1}, it suffices to check it at $\tau = \tau_c$ because $q(\tau)(1-\tau)/\tau$ and $h(\tau)$ are decreasing and increasing in $\tau$ respectively. 
%Then
%\[
%dq(\tau)\cdot \frac{1-\tau}{\tau} > \max\{1, h(\tau)\}= \max\left\{1,  \frac{p}{d(p-d)} + \frac{pd}{pd-1}\cdot\frac{\tau^2}{q(\tau)}\right\}
%\]
%holds under~\eqref{eq:condp} because the left hand side and right hand side are decreasing and increasing in $\tau$ respectively.
Note that
\[
h(\tau)  = \frac{p}{d(p-d)} + \frac{pd}{pd-1}\cdot\frac{\tau^2}{q(\tau)} \leq \frac{1}{d} + \frac{\tau^2}{q(\tau)} + \frac{1}{p-d}\left(1 + \frac{\tau^2}{dq(\tau)}\right)
\]
where
\[
\frac{1}{pd-1} < \frac{1}{p-d}\frac{1}{d}.
\]
Therefore, it suffices to prove:
\begin{align*}
dq(\tau)\cdot \frac{1-\tau}{\tau} > 1,~~~dq(\tau)\cdot \frac{1-\tau}{\tau}  \geq \frac{1}{d} + \frac{\tau^2}{q(\tau)} + \frac{1}{p-d}\left(1 + \frac{\tau^2}{dq(\tau)}\right)
\end{align*}
so that the second inequality in~\eqref{eq:condeasy1} holds.

This is equivalent to the following three inequalities
\begin{equation}\label{eq:four}
\begin{aligned}
& dq(\tau) > \frac{\tau}{1-\tau}, \\
& \frac{dq(\tau)(1-\tau)}{\tau} -  \frac{1}{d} - \frac{\tau^2}{q(\tau)}  > 0,\\
& p > \left(\frac{dq(\tau)(1-\tau)}{\tau} -  \frac{1}{d} - \frac{\tau^2}{q(\tau)} \right)^{-1} \left(1+\frac{1}{d} \cdot \frac{\tau^2}{q(\tau)}\right) + d, %\\
%& p  \geq \left( 1 + \frac{2\tau(1-\tau)}{2\tau-1}\right)d + \frac{2\tau^2 - 3d^{-1}\tau + d^{-1}}{2\tau-1}
\end{aligned}
\end{equation}
at $\tau = \tau_c.$

For $d=2$, numerically it is verified that the three inequalities hold. We prove they hold for $d\geq 3.$
For $\tau = \sqrt{d}/(\sqrt{d}+1)$, then
\[
\frac{\tau^2}{q(\tau)} = \frac{\sqrt{d}}{2+\sqrt{d}},~~~\tau(1-\tau) = \frac{\sqrt{d}}{(\sqrt{d}+1)^2}. 
\]
We check~\eqref{eq:four} one by one:
\begin{align*}
dq(\tau) > \frac{\tau}{1-\tau} & \Longleftrightarrow \frac{dq(\tau)}{\tau^2}\cdot \tau(1-\tau) > 1 \Longleftrightarrow
d\cdot \frac{2+\sqrt{d}}{\sqrt{d}}\cdot\frac{\sqrt{d}}{(\sqrt{d}+1)^2} > 1, \\
 \frac{dq(\tau)(1-\tau)}{\tau} -  \frac{1}{d} - \frac{\tau^2}{q(\tau)}   & = \frac{d(2+\sqrt{d})}{(\sqrt{d}+1)^2} - \frac{1}{d} - \frac{\sqrt{d}}{2+\sqrt{d}}  \geq \frac{d^2-2}{d(2+\sqrt{d})} > 0,
\end{align*}
for any $d\geq 3$. For the third inequality in~\eqref{eq:four}, using the inequality above gives
\[
p >\frac{d(2+\sqrt{d})}{d^2-2} \left(1 + \frac{\sqrt{d}}{d(2+\sqrt{d})} \right) + d = \frac{d(2+\sqrt{d}) + \sqrt{d}}{d^2-2} + d.
\]
It remains to show the right hand side above is smaller than that of~\eqref{eq:condp}, i.e., the inequality above is implied by~\eqref{eq:condp}:
\begin{align*}
& \left(1 + \frac{2\sqrt{d}}{d-1}\right)d+2 - \left(\frac{d(2+\sqrt{d}) + \sqrt{d}}{d^2-2} + d\right) \\
& \qquad=  \frac{2\sqrt{d}d}{d-1} +2 - \frac{d(2+\sqrt{d}) + \sqrt{d}}{d^2-2}   > \frac{2\sqrt{d}d(d+1) - 2d(d+1) }{d^2-2} + 2 > 0
\end{align*}
where $d(2+\sqrt{d}) + \sqrt{d} < 2d (d+1).$ 
\end{proof}

\subsection{Proof of Theorem~\ref{thm:optimality}}\label{ss:proof:optimality}

\begin{proof}[\bf Proof of Theorem~\ref{thm:optimality}(a)]
The proof follows directly from Lemma~\ref{lem:thmmain} and~\ref{lem:alpha_find}. 
Suppose $\BZ^{\top}\BS = 0$, and then $v = 1 - \|\BZ^{\top}\BS\|_F^2/(n^2d) = 1$ and~\eqref{def:gsx_1} gives
\begin{align*}
\sup_{\BS:\BZ^{\top}\BS = 0} g(\BS,x,\tau) & \leq  \sup_{0\leq u\leq w\leq v\leq 1, v=1} g(u,v,w,x,\tau) \\
& =   \sup_{0\leq u\leq w\leq 1, v=1} q(\tau) u\left( 2 - \frac{pu}{p-d}\right) + d\tau^2 w \left(2 - \frac{pdw}{pd-1}\right)    \\
& =  \frac{p-d}{p} \cdot q(\tau) + \frac{pd-1}{pd}\cdot d\tau^2
\end{align*}
where $(1-dv)_+ = 0$ for $v=1.$
By using~\eqref{lem:alpha_find}, we have
\[
r(\tau) \alpha^{-1} \geq  \frac{p-d}{p} \cdot q(\tau) +\frac{pd-1}{pd}\cdot d\tau^2
\]
and we set
\[
\alpha_{\BZ^{\top}\BS=0}(p,d,\tau) = \frac{ r(\tau) }{ \dfrac{p-d}{p}\cdot q(\tau) + d\tau^2\left(1 - \frac{1}{pd}\right) }.
\]
Then we optimize $\alpha(p,d,\tau)$ over $\tau\in [0,1]$ for any fixed pair of $(p,d)$:
\begin{align*}
\alpha_{\BZ^{\top}\BS= 0}(p,d) & = \sup_{0\leq \tau\leq 1}\alpha(p,d,\tau) \\
& =\sup_{0\leq \tau\leq 1} \frac{2(d-1)\tau^2 + p - d}{ \dfrac{p-d}{p} ((d-2)\tau^2 - 2(d-1)\tau + d) + \frac{pd-1}{pd}d\tau^2 } \\
& =  \frac{\frac{d-1}{2} + p - d}{ \frac{p-d}{p} (\frac{d-2}{4} - (d-1) + d) + \frac{pd-1}{pd} \frac{d}{4}}   \\
& = \frac{ 4p( p - \frac{d+1}{2})}{ (p-d)(d+2)  +  pd -1}   = \frac{2p}{d+1}
%& = {\color{blue}  \frac{ 4p( p - \frac{d+1}{2})}{ 2p(d+1) -(d+1)^2}  } = 
\end{align*}
where the supremum is attained at $\tau = 1/2$.
%Consider
%\begin{align*}
%& \frac{2(d-1)\tau^2 + p - d}{ \dfrac{p-d}{p} ((d-2)\tau^2 - 2(d-1)\tau + d) + d\tau^2 \left(1 - \frac{1}{pd}\right)}  \\
%& = \frac{p}{p-d} \cdot  \frac{2(d-1)\tau^2 + p - d}{  \left((d-2) + \frac{pd-1}{p-d}\right)\tau^2 - 2(d-1)\tau + d } 
%\end{align*}
%\begin{align*}
%\frac{2(d-1)\tau^2 + p - d}{ \frac{2pd - (d-1)^2}{p-d}\tau^2 - 2(d-1)\tau + d }  = \frac{2(d-1)\tau}{  \frac{2pd - (d-1)^2}{p-d} \tau - (d-1) }
%\end{align*}

\noindent{\bf Proof of Theorem~\ref{thm:optimality}(b)}
Consider $\BS\in\St(p,d)^{\otimes n}$ of the following explicit form:
\[
\{\BS_i\}_{i=1}^n= \{ \diag(\pm 1,\cdots,\pm 1) [\I_d, \bzero_{d\times (p-d)}] \BM^k: 1\leq k\leq p \} 
\]
where $\BM$ is a fixed permutation matrix of size $p\times p$ and $n = 2^dp.$
It holds that
\[
\BS^{\top} \BS= \frac{nd}{p} \I_p,~~~\BZ^{\top}\BS = 0
\]
and moreover if $\BQ\in \{\BS_i\}_{i=1}^n$, then so is $-\BQ$.

Then we let 
\[
\BL = \I_{nd} - \frac{\BZ\BZ^{\top}}{n} + t \left( \I_{nd} - \frac{\BZ\BZ^{\top}}{n} - \frac{p\BS\BS^{\top}}{nd}\right).
\]
We will show that if
\[
t \geq \frac{2p}{d+1}-1,
\]
then $\BS$ is a second-order critical point and $\BZ$ is the global minimizer. In this case, the largest and $(d+1)$-th smallest eigenvalue of $\BL$ are $1 + t$ and 1, and the condition number satisfies
\[
\alpha = \frac{\lambda_{\max}(\BL)}{\lambda_{d+1}(\BL)} = \frac{2p}{d+1}.
\]

By construction, we know that $\BL\BZ = 0$ and $\BL \BS = \BS$ Since $\BS\in\RR^{nd\times p}$ consists of $p$ eigenvectors of $\BL$, then $\BL$ is positive semidefinite for any $t\geq 0.$ This implies that $\BZ$ is the unique global minimizer.

To show that $\BS$ is a second-order critical point, it remains to argue
\[
\lag \BL - \BDG(\BL\BS\BS^{\top}), \dot{\BS}\dot{\BS}^{\top}\rag \geq 0
\]
where $\BDG(\BL\BS\BS^{\top}) = \I_{nd}$ and $(\BL - \BDG(\BL\BS\BS^{\top}))\BS = 0$. This second-order critical point is equivalent to
\begin{align*}
&  \left\lag - \frac{\BZ\BZ^{\top}}{n} + t \left( \I_{nd} - \frac{\BZ\BZ^{\top}}{n} - \frac{p\BS\BS^{\top}}{nd}\right), \dot{\BS}\dot{\BS}^{\top}\right\rag\geq 0  \\
%& \Longleftrightarrow t\left( \|\dot{\BS}\|_F^2 - \frac{\|\BZ^{\top}\dot{\BS}\|_F^2}{n} - \frac{p}{nd} \|\BS^{\top}\dot{\BS}\|_F^2 \right) \geq \frac{1}{n}\|\BZ^{\top}\dot{\BS}\|_F^2 \\
& \Longleftrightarrow \|\dot{\BS}\|_F^2  \geq \frac{1+t^{-1}}{n}\|\BZ^{\top}\dot{\BS}\|_F^2 + \frac{p}{nd} \|\BS^{\top}\dot{\BS}\|_F^2 
\end{align*}

For any tangent vector $\dot{\BS}_i$ of $\St(p,d)$ at $\BS_i$, it can be written as 
\[
\VEC(\dot{\BS}_i) = \VEC\left(\BY_i - \frac{1}{2}(\BS_i\BY_i^{\top}+\BY_i\BS_i^{\top})\BS_i\right) = \BPi_i\VEC(\BY_i)
\]
for some matrix $\BY_i\in\RR^{d\times p}$ where $\BPi_i\in\RR^{pd\times pd}$ is a projection matrix. Note that
\begin{align*}
\|\BZ^{\top}\dot{\BS}\|_F^2 & = \left\|\sum_{i=1}^n \VEC(\dot{\BS}_i)\right\|_F^2 = \left\|\sum_{i=1}^n \BPi_i \VEC(\BY_i)\right\|_F^2 =\| (\BZ^{\top}\otimes \I_p)\BPi \by\|_F^2, \\
\|\BS^{\top}\dot{\BS}\|_F^2 & = \left\|\sum_{i=1}^n \BS_i^{\top}\dot{\BS}_i\right\|_F^2 = \left\| \sum_{i=1}^n (\I_p\otimes \BS_i^{\top})\VEC(\dot{\BS}_i)  \right\|_F^2 \\
&  = \left\| \sum_{i=1}^n (\I_p\otimes \BS_i^{\top})\BPi_i\VEC(\BY_i)  \right\|^2 = \| [\I_p\otimes \BS_1^{\top},\cdots,\I_p\otimes \BS_n^{\top}]\BPi\by \|_F^2
\end{align*}
where 
\[
\BPi = \blkdiag(\BPi_1,\cdots,\BPi_n)\in\RR^{npd\times npd},~~\by = \begin{bmatrix}
\VEC(\BY_1) \\
\vdots\\
\VEC(\BY_n)
\end{bmatrix}\in\RR^{npd}.
\]
Therefore, it is equivalent to show:
\begin{equation}\label{eq:opt_ss}
\BPi\succeq \frac{1+t^{-1}}{n}\BPi (\BZ\BZ^{\top}\otimes \I_p)\BPi + \frac{p}{nd}\BPi [\I_p\otimes \BS_1^{\top},\cdots,\I_p\otimes \BS_n^{\top}]^{\top}[\I_p\otimes \BS_1^{\top},\cdots,\I_p\otimes \BS_n^{\top}]\BPi.
\end{equation}

First we show that $[\I_p\otimes \BS_1^{\top},\cdots,\I_p\otimes \BS_n^{\top}]\BPi (\BZ\otimes \I_p)  = 0$. For any $\bx\in\RR^{pd}$ and $\VEC(\BX) = \bx$, it holds that 
\begin{align*}
&  [\I_p\otimes \BS_1^{\top},\cdots,\I_p\otimes \BS_n^{\top}]\BPi (\BZ\otimes \I_p) \bx = \sum_{i=1}^n (\I_p\otimes \BS_i^{\top}) \BPi_i \bx \\
 & = \sum_{i=1}^n (\I_p\otimes \BS_i^{\top})\VEC\left(  \BX - \frac{1}{2}(\BS_i\BX^{\top}+\BX\BS_i^{\top})\BS_i \right) \\
 & = \sum_{i=1}^n \VEC\left( \BS_i^{\top}\BX - \frac{1}{2} \BS_i^{\top}(\BS_i\BX^{\top}+\BX\BS_i^{\top})\BS_i\right) \\
 & = -\sum_{i=1}^n \VEC\left( \BS_i^{\top}\BX - \frac{1}{2} \BS_i^{\top}(\BS_i\BX^{\top}+\BX\BS_i^{\top})\BS_i\right) 
\end{align*}
for any $\bx\in\RR^{pd}$ because both $\BS_k$ and $-\BS_k$ are in the set $\{\BS_i\}_{i=1}^n.$ 

Also, we have
\[
\|[\I_p\otimes \BS_1^{\top},\cdots,\I_p\otimes \BS_n^{\top}]\BPi\|^2 \leq \left\| \sum_{i=1}^n \I_p\otimes \BS_i^{\top}\BS_i \right\|=  \| \BS^{\top}\BS \| = \frac{nd}{p}.
\]
Next, we compute 
\[
(\BPi(\BZ\otimes \I_p))^{\top}\BPi(\BZ\otimes \I_p) = (\BZ\otimes \I_p)^{\top}\BPi(\BZ\otimes \I_p) = \sum_{i=1}^n\BPi_i = n \left( 1 - \frac{d+1}{2p}\right)\I_{pd}
\]
where $\BPi_i\in\RR^{pd\times pd}$ is the projection onto the tangent space at $\BS_i.$ 

For any $\be_k\widetilde{\be}_{\ell}^{\top}$ where $\{\be_k\}_{k=1}^d$ and $\{\widetilde{\be}_{\ell}\}_{\ell=1}^p$ are the canonical basis of $\RR^d$ and $\RR^p$ respectively, then
\begin{align*}
\sum_{i=1}^n \lag   \BPi_i(\widetilde{\be}_{\ell}\otimes \be_k), \widetilde{\be}_{\ell}\otimes \be_k \rag & = \sum_{i=1}^n \left\lag \be_k\widetilde{\be}_{\ell}^{\top} - \frac{1}{2}(\BS_i\widetilde{\be}_{\ell}\be_k^{\top} + \be_k\widetilde{\be}_{\ell}^{\top}\BS_i^{\top})\BS_i, \be_k\widetilde{\be}_{\ell}^{\top}\right\rag \\
& = n - \frac{1}{2}\sum_{i=1}^n\left( (\be_k^{\top}\BS_i\widetilde{\be}_{\ell})^2 + \|\be_k\widetilde{\be}_{\ell}^{\top}\BS_i\|_F^2 \right)  \\
& = n - \frac{n(d+1)}{2p}
\end{align*}
where
\[
\sum_{i=1}^n (\be_k^{\top}\BS_i\widetilde{\be}_{\ell})^2 = \frac{n}{p},~~~\sum_{i=1}^n \lag \be_k\widetilde{\be}_{\ell}^{\top}\BS_i^{\top}, \be_k\widetilde{\be}_{\ell}^{\top}\BS_i^{\top}\rag = \frac{nd}{p}.
\]
Also 
\begin{align*}
\sum_{i=1}^n \lag   \BPi_i(\widetilde{\be}_{\ell}\otimes \be_k), \widetilde{\be}_{\ell'}\otimes \be_{k'} \rag 
& = \sum_{i=1}^n \left\lag \be_k\widetilde{\be}_{\ell}^{\top} - \frac{1}{2}(\BS_i\widetilde{\be}_{\ell}\be_k^{\top} + \be_k\widetilde{\be}_{\ell}^{\top}\BS_i^{\top})\BS_i, \be_{k'}\widetilde{\be}_{\ell'}^{\top}\right\rag \\
& = - \frac{1}{2}\sum_{i=1}^n \lag \BS_i\widetilde{\be}_{\ell'},\be_k \rag \lag \BS_i\widetilde{\be}_{\ell}, \be_{k'}\rag = -\frac{1}{2}\sum_{i=1}^n (\BS_i)_{k\ell'}(\BS_i)_{k'\ell} =0
\end{align*}
where $\BS^{\top}\BS = \frac{nd}{p}\I_p$ and $\lag\be_k\widetilde{\be}_{\ell}^{\top}, \be_{k'}\widetilde{\be}_{\ell'}^{\top} \rag = 0.$
Here $(\BS_i)_{k\ell'}(\BS_i)_{k'\ell} \neq 0$ if and only if $\ell' - k = \ell - k'\mod p$. In this case, the sum is zero but the set of $\{\BS_i\}_{i=1}^n$ is symmetric, i.e., $\diag(\pm 1,\cdots,\pm1)\BS_i$ are in the set $\{\BS_i\}_{i=1}^n.$

Therefore, we have
\[
n^{-1}\sum_{i=1}^n \BPi_i = \left( 1-\frac{d+1}{2p}\right)\I_{pd}.
\]
In summary, we have
\begin{align*}
& n^{-1}\|\BPi (\BZ\BZ^{\top}\otimes \I_p)\BPi\| = 1 - \frac{d+1}{2p}, \\
& \|\BPi [\I_p\otimes \BS_1^{\top},\cdots,\I_p\otimes \BS_n^{\top}]^{\top}[\I_p\otimes \BS_1^{\top},\cdots,\I_p\otimes \BS_n^{\top}]\BPi\| \leq \frac{nd}{p}, \\
& [\I_p\otimes \BS_1^{\top},\cdots,\I_p\otimes \BS_n^{\top}]\BPi (\BZ\otimes \I_p)  = 0.
\end{align*}
Thus, applying the three inequalities to~\eqref{eq:opt_ss}, we conclude that $\BS$ is a second-order critical point if
\[
1 \geq (1 + t^{-1}) \left(1 -\frac{d+1}{2p}\right) \Longleftrightarrow t \geq  \frac{2p}{d+1} -1.
\]

\end{proof}

%\bibliography{OptimalKuramoto.bib}
%\bibliographystyle{abbrv}

\end{document}